\documentclass{article}
\usepackage{amssymb,amscd,amsmath,amsthm,xcolor}
\usepackage{graphics}
\usepackage[dvips]{graphicx}
\usepackage{hyperref}
\usepackage[noabbrev,capitalise]{cleveref}
\usepackage{url}
\usepackage{authblk}
\usepackage{ textcomp }

\usepackage{pst-all}

\def\PP{\mathbb{P}}

\newcommand{\Psf}{\mathsf{P}}
\newcommand{\Qsf}{\mathsf{Q}}

\def\F{\mathcal{F}}

\def\C{\mathcal{C}}
\def\I{\mathcal{I}}

\def\L{\mathcal{L}}

\def\W{\mathcal{W}}

\def\P{\mathcal{P}}
\def\M{\mathcal{M}}

\def\Hc{\mathcal{H}}

\newcommand{\cs}{2^{\NN}}

\newcommand{\bstr}{2^{<\NN}}

\newcommand{\uh}{\upharpoonright}


\newcommand{\qvdash}{\operatorname{{?}{\vdash}}}
\newcommand{\nqvdash}{\operatorname{{?}{\nvdash}}}

\newcommand{\ISig}{\mathsf{I}\Sigma^0}

\newcommand{\BSig}{\mathsf{B}\Sigma^0}

\newcommand{\isig}{\mathsf{I}\Sigma}
\newcommand{\bsig}{\mathsf{B}\Sigma}

\newcommand{\RCA}[0]{\mathsf{RCA}}
\newcommand{\WKL}[0]{\mathsf{WKL}}

\newcommand{\CAC}[0]{\mathsf{CAC}}
\newcommand{\ADS}[0]{\mathsf{ADS}}

\newcommand{\EM}[0]{\mathsf{EM}}

\newcommand{\RT}[0]{\mathsf{RT}}
\newcommand{\SRT}[0]{\mathsf{SRT}}

\newcommand{\NN}[0]{\mathbb{N}}

\newcommand{\card}{\operatorname{card}}



\def\qt#1{``#1''}%


\usepackage{mathtools}

\def\N{\mathbb{N}}
\def\NN{\mathbb{N}}
\usepackage{helvet}

\def\PP{\mathbb{P}}

\def\F{\mathcal{F}}

\def\C{\mathcal{C}}
\def\I{\mathcal{I}}

\def\P{\mathcal{P}}
\def\M{\mathcal{M}}

\def\Hc{\mathcal{H}}
\def\PP{\mathbb{P}}

\newcommand{\R}{\mathcal{R}}
\usepackage{algorithm2e}
\RestyleAlgo{ruled}
\usepackage{babel}[french]
\usepackage{tikz}
\renewcommand{\L}[0]{\mathcal{L}}

\newcommand{\dom}{\operatorname{dom}}

\usepackage{xcolor} 

\newcommand{\COH}{\mathsf{COH}}

\newcommand{\DNCS}[1]{#1\mbox{-}\mathsf{DNC}}

\newcommand{\HEM}{\mathsf{HEM}}

\def\qt#1{``#1''}%

\newtheorem{theorem}{Theorem}
\numberwithin{theorem}{section}
\newtheorem{maintheorem}[theorem]{Main Theorem}
\newtheorem{lemma}[theorem]{Lemma}
\newtheorem{proposition}[theorem]{Proposition}
\theoremstyle{definition}
\newtheorem{definition}[theorem]{Definition}
\newtheorem{corollary}[theorem]{Corollary}
\newtheorem{example}[theorem]{Example}

\newtheorem{remark}[theorem]{Remark}
\newtheorem{statement}[theorem]{Statement}

\makeatletter
\newtheorem*{rep@theorem}{\rep@title}
\newcommand{\newreptheorem}[2]{%
\newenvironment{rep#1}[1]{%
 \def\rep@title{#2 \ref{##1}}%
 \begin{rep@theorem}}%
 {\end{rep@theorem}}}
\makeatother

\title{Ramsey-like theorems and immunities}

\author{Ahmed Mimouni \and Ludovic Patey}

\newreptheorem{maintheorem}{Main Theorem}
\begin{document}

\maketitle

\begin{abstract}
A Ramsey-like theorem is a statement of the form \qt{For every 2-coloring of $[\NN]^2$, there exists an infinite set~$H \subseteq \NN$ such that $[H]^2$ avoids some pattern}. We prove that none of these statements are computably trivial, by constructing a computable 2-coloring of~$[\NN]^2$ such that every infinite set avoiding any pattern computes a diagonally non-computable function relative to~$\emptyset'$. We also consider multiple notions of weaknesses based of variants of immunity, and characterize the Ramsey-like theorems which preserve these notions or not, based on the shape of the avoided pattern. This is part of a larger study of the reverse mathematics of Ramsey-like theorems.
\end{abstract}

\section{Introduction}

Among the theorems studied in reverse mathematics, Ramsey's theorem for pairs and two colors plays an important role, as it is historically the first statement proven to escape the structural phenomenon known as the \qt{Big Five}~\cite{simpson_2009,montalban2011open}. Ramsey's theorem for pairs and two colors ($\RT^2_2$) states that for every 2-coloring of $[\NN]^2$ --- the set of the unordered pairs over~$\NN$ ---, there exists an infinite set $H \subseteq \NN$ such that $[H]^2$ is monochromatic. The meta-mathematical study of Ramsey's theorem for pairs and its consequences motivated the discovery of many tools in computability theory, proof theory and combinatorics, among others~\cite{seetapun1995strength,cholak_jockusch_slaman_2001,liu2012rt22,monin2021srt22,patey2018proof}. 

In this article, we study a generalization of $\RT^2_2$ to a family of statements $\RT^2_2(p)$ stating that every coloring~$f : [\NN]^2 \to 2$ admits an infinite subset $H \subseteq \NN$ avoiding the pattern~$p$, where a pattern~$p : [\ell]^2 \to 2$ is a finite 2-coloring for some~$\ell \in \NN$, and avoiding the pattern~$p$ means that $p$ does not embed to $f \uh [H]^2$. These statements are referred to as \emph{Ramsey-like theorems}. We prove lower bounds on the statements $\RT^2_2(p)$ in a strong sense: there exists a computable coloring $f : [\NN]^2 \to 2$ such that every infinite set avoiding any pattern computes a diagonally non-computable function relative to~$\emptyset'$, that is, a function $g : \NN \to \NN$ such that $g(e) \neq \Phi_e^{\emptyset'}(e)$ for every~$e \in \NN$. We also prove that $\RT^2_2(p)$ does not admit probabilistic solutions, in that there exists a computable coloring $f : [\NN]^2 \to 2$ such that the measure of oracles computing an infinite set avoiding any pattern is~0. Beyond those uniform lower bounds, our main contributions is a characterization of which Ramsey-like theorems $\RT^2_2(p)$ preserve various notions of immunity, based on the shape of the pattern~$p$. More precisely, we consider preservation of $\omega$ hyperimmunities~\cite{patey2017iterative,downey2022relationships}, preservation of 2-dimensional hyperimmunity~\cite{liu2022reverse} and preservation of $\omega$ 2-dimensional hyperimmunities. The characterization for preservation of $\omega$ hyperimmunities is used in a follow-up article by Le Houérou and Patey~\cite{houerou2025ramseylike} to identify the Ramsey-like theorems equivalent to Ramsey's theorem for pairs over $\omega$-models.

\subsection{Reverse mathematics}

Our main motivation is reverse mathematics, but since we mainly consider separations over $\omega$-models, we shall place ourselves in the standard computability-theoretic realm, with no induction considerations.
\emph{Reverse mathematics} is a foundational program started by Harvey Friedman, whose goal is to find optimal axioms to prove ordinary theorems. It uses sub-systems of second-order arithmetic, with a base theory, $\RCA_0$, standing for Recursive Comprehension Axiom, which arguably captures \qt{computable mathematics}. See Simpson~\cite{simpson_2009} for a presentation of early reverse mathematics, with the Big Five phenomenon, and Dzhafarov and Mummert~\cite{dzhafarov2022reverse} for a more recent introduction.

Models of second-order arithmetic are of the form $\M = (M, S, +, \cdot, <)$, where $M$ denotes the sets of integers, $S \subseteq \P(M)$ is the collection of sets, $+$, $\cdot$ are binary operators and $<$ is a binary relation. The structure $(M, +, \cdot, <)$ is also called the \emph{first-order part} of~$\M$, and $S$ its \emph{second-order part}. When proving a non-implication over~$\RCA_0$, one prefers to witness the separation by a model as close to the standard interpretation as possible. We shall therefore mostly consider \emph{$\omega$-models}, that is, structures of the form $(\omega, S, +, \cdot, <)$, where $\omega$ is the set of standard integers, and $+, \cdot$ and $<$ have their usual interpretation. An $\omega$-model is therefore fully specified by its second-order part, and often identified with it.

Friedman characterized the second-order parts of $\omega$-models of~$\RCA_0$. A \emph{Turing ideal} is a non-empty collection of sets $\I$ that is downward-closed under the Turing reduction ($\forall X \in \I\ \forall Y \leq_T X\ Y \in \I$) and closed under the effective join ($\forall X, Y \in \I\ X \oplus Y \in \I$), where $X \oplus Y = \{ 2n : n \in X \} \cup \{2n+1 : n \in Y \}$. An $\omega$-model satisfies $\RCA_0$ if and only if its second-order part is a Turing ideal. From now on, we shall always assume that the $\omega$-models satisfy $\RCA_0$.

\subsection{Ramsey's theorem for pairs}\label[section]{sec:rt22}

As stated earlier, $\RT_2^2$ holds an important place in reverse mathematics, as it belongs to its own subsystem.
Its study was shaped by numerous seminal papers and long-standing open questions. Jockusch~\cite{jockusch1972ramsey} studied arithmetic bounds for Ramsey's theorem for pairs, by proving that every computable instance of $\RT^2_2$ admits a $\Pi^0_2$ solution, while there exists a computable instance with no $\Sigma^0_2$ solution. By \emph{instance}, we mean a coloring $f : [\NN]^2 \to 2$, while a \emph{solution} to~$f$ is an infinite homogeneous set. Seetapun~\cite{seetapun1995strength} then showed that $\RT^2_2$ does not imply the arithmetic comprehension scheme over~$\RCA_0$ by proving so-called \emph{cone avoidance} of~$\RT^2_2$, that is, for every non-computable set~$C$ and every computable instance of~$\RT^2_2$, there exists a solution~$H$ such that $C \not \leq_T H$. Cholak, Jockusch and Slaman~\cite{cholak_jockusch_slaman_2001} extensively studied $\RT^2_2$ both from a computability-theoretic and proof-theoretic viewpoint, and introduced the decomposition in terms of stability and cohesiveness. Then, Liu~\cite{liu2012rt22} proved that $\RT^2_2$ does not imply weak K\"onig's lemma over~$\RCA_0$ using PA avoidance. Nowadays, the reverse mathematics of Ramsey's theorem for pairs are relatively well understood, but few important open problems remain, such as the characterization of its first-order consequences~\cite{kolo2021search}.

In order to better understand the strength of Ramsey's theorem for pairs and its role in the Big Five phenomenon, multiple of its consequences were studied, among which the Erd\H{o}s-Moser theorem~\cite{erdos1964representation} and the Ascending Descending Sequen\-ce principle. A set $H$ is \emph{transitive} for a coloring $f : [\NN]^2 \to 2$ if for every $x < y < z \in H$ and every~$i < 2$, $f(x, y) = i$ and $f(y, z) = i$ implies $f(x, z) = i$. The Erd\H{o}s-Moser theorem ($\EM$) is a statement from graph theory stating that every coloring admits an infinite transitive set. The Ascending Descending Sequence principle ($\ADS$) says that every infinite linear order admits an infinite ascending or descending sequence. Bovykin and Weiermann (and Mont\'alban)~\cite{bovykin2017strength} showed that $\RT^2_2$ admits a natural decomposition in terms of $\EM$ and $\ADS$. Indeed, given an instance $f : [\NN]^2 \to 2$ of~$\RT^2_2$, see it as an instance of~$\EM$, to get an infinite transitive set~$G = \{ x_0 < x_1 < \dots \} \subseteq \NN$. Such a transitive set induces a linear order $\L = (\NN, <_\L)$ as follows: for every~$i <_\NN j$, let $i <_\L j$ iff $f(x_i, x_j) = 0$. For every infinite $\L$-ascending or $\L$-descending sequence $H$, the set $\{ x_i : i \in H \}$ is an infinite $f$-homogeneous set. Lerman, Solomon and Towsner~\cite{lerman2013separating} and Hirschfeldt and Shore~\cite{hirschfeldt2008strength} proved that this decomposition is non-trivial, in that neither $\EM$ nor $\ADS$ implies $\RT^2_2$ over~$\RCA_0$.

\subsection{Ramsey-like theorems}

Both the Erd\H{o}s-Moser theorem and the Ascending Descending Sequence principle can be seen as weakening of Ramsey's theorem for pairs, formulated in terms of avoidance of forbidden patterns. As mentioned, a \emph{pattern} is a 2-coloring of pairs over a finite initial segment of~$\NN$, i.e.\ a function $p : [\ell]^2 \to 2$ for some~$\ell \geq 1$. Then $\ell$ is called the \emph{length} of the pattern~$p$, written $|p|$. 
Let $p$ be a pattern of length~$\ell$, and $f : [\NN]^2 \to 2$ be a coloring. We say that a finite set $F = \{ x_0 < \dots < x_{\ell-1} \}$ \emph{$f$-realizes} $p$ if for every~$\sigma \in [\ell]^2$, $f(\{x_i : i \in \sigma\}) = p(\sigma)$. We say that~$H \subseteq \NN$ \emph{$f$-avoids} the pattern $p$ if no subset of~$H$ $f$-realizes~$p$.
The Erd\H{o}s-Moser theorem can then be rephrased as \qt{For every coloring $f : [\NN]^2 \to 2$, there exists an infinite set $f$-avoiding the two patterns of \Cref{fig:non-transitivity}.} The Ascending Descending Sequence principle can be seen as the dual statement \qt{For every coloring $f : [\NN]^2 \to 2$ avoiding the patterns of \Cref{fig:non-transitivity}, there exists an infinite homogeneous set.}

\begin{figure}[htbp]
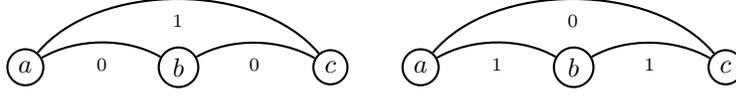

\vspace{0.7cm}
$$
\psmatrix[colsep=1.5cm,rowsep=1.5cm,mnode=circle]
a & b & c
\everypsbox{\scriptstyle} 
\ncarc[arcangle=30]{1,1}{1,2}_{0} 
\ncarc[arcangle=30]{1,2}{1,3}_{0}
\ncarc[arcangle=50]{1,1}{1,3}_{1} 
\endpsmatrix \hspace{20pt}
\psmatrix[colsep=1.5cm,rowsep=1.5cm,mnode=circle]
a & b & c
\everypsbox{\scriptstyle} 
\ncarc[arcangle=30]{1,1}{1,2}_{1} 
\ncarc[arcangle=30]{1,2}{1,3}_{1}
\ncarc[arcangle=50]{1,1}{1,3}_{0} 
\endpsmatrix
$$
\caption{The two graphs represent the patterns $p_0 : [3]^2 \to 2$ and $p_1 : [3]^2 \to 2$, respectively defined as follows: $p_0(0, 1) = p_0(1, 2) = 0$, $p_0(0,2) = 1$, $p_1(0, 1) = p_1(1, 2) = 1$ and $p_1(0, 2) = 0$. The domain $\{0,1,2\}$ corresponding to the set of vertices is renamed $\{a, b, c\}$ for readability.}\label{fig:non-transitivity}
\end{figure}

In this article, we only consider Ramsey-like theorems of the first kind, that is, the following family of statements:

\begin{statement}
For every pattern~$p$, $\RT^2_2(p)$ is the statement \qt{Every coloring $f : [\NN]^2 \to 2$ has an infinite set $f$-avoiding~$p$.}    
\end{statement}

First, note that Ramsey's theorem for pairs does not seem to be cast in this setting, as it states the existence of an infinite set avoiding either of the two constant patterns of length~2. We shall however see through \Cref{lem:avoiding-join} that for every pair of patterns $p, q$, there exists another pattern $p \uplus q$ such that the existence of an infinite set avoiding $p \uplus q$ is equivalent to the existence of an infinite set avoiding $p$ or $q$. Thus, every disjunctive pattern avoidance statement belongs to this family of Ramsey-like theorems.

Patey~\cite{patey_ramsey-like_2020} first introduced a more general family of Ramsey-like theorems where arbitrary many patterns can be avoided simultaneously, and characterized the patterns admitting strong cone avoidance. Ramsey-like theorems for patterns of size~3 and their dual statements were studied independently by multiple authors. See Cervelle, Gaudelier and Patey~\cite[Section 1.2]{cervelle2000reverse} for a survey.
In this article, our main contributions are of two kind. First, we prove uniform lower bounds to the family of Ramsey-like theorems: A function $g : \NN \to \NN$ is \emph{diagonally non-computable} relative to~$X$ (or $X$-DNC) if for every~$e \in \NN$, $g(e) \neq \Phi^X_e(e)$.

\begin{maintheorem}\label{mainthm:dnczp}
     There exists a computable coloring $f : [\NN]^2 \to 2$ such that every infinite set~$H \subseteq \NN$ avoiding any pattern for $f$ computes a $\emptyset'$-DNC function.
\end{maintheorem}

In particular, such a coloring admits no $\Sigma^0_2$ infinite set avoiding any pattern, as there is no $\Sigma^0_2$ $\emptyset'$-DNC function. Note that for every set~$X$, there exists a probabilistic algorithm to compute an $X$-DNC function: given $n \in \NN$, let $g(n)$ be a value picked at random within $[0, 2^{n+2})$. Then the probability of failure is bounded by $\sum_n 2^{-n-2} = 0.5$. Our next lower bound shows that there is no probabilist algorithm avoiding a pattern in general, as there exists a coloring such that the class of the sets avoiding any pattern is of measure $0$ :

\begin{maintheorem}\label{mainthm:measure0}
  There is a computable coloring $f : [\NN]^2 \to 2$ such that the measure of oracles computing an infinite set avoiding any pattern for~$f$ is~0.
\end{maintheorem}

Our second main contribution is the characterization of which Ramsey-like theorems admit some notion of preservation, in a sense that we define now.

\subsection{Preservation of immunities}\label[section]{sec:preservation-immunities}

Many statements studied in reverse mathematics are $\Pi^1_2$ sentences which can be seen as mathematical problems, in terms of instances and solutions. 
A proof that a problem~$\Psf$ does not imply another problem~$\Qsf$ over $\RCA_0$ usually consists of creating an $\omega$-model by iteratively adding solutions to instances of~$\Psf$, while avoiding adding solutions to a fixed instance of $\Qsf$. For this, it is useful to identify a property which is preserved by $\Psf$, but not by $\Qsf$.

\begin{definition}
A \emph{weakness property} is a non-empty class of sets which is downward-closed under the Turing reduction.
\end{definition}

There exist many natural weakness properties, such as being $\Delta^0_n$ for some~$n \geq 1$, being of low degree, or even not computing the halting set. Any Turing ideal is a weakness property, but most natural weakness properties are not closed under effective join. For instance, the join of two low sets can compute the halting set.

\begin{definition}\label[definition]{def:preservation}
A problem~$\Psf$ \emph{preserves} a weakness property~$\W \subseteq \cs$ if for every~$Z \in \W$ and every $Z$-computable $\Psf$-instance~$X$, there is a $\Psf$-solution~$Y$ to~$X$ such that $Y \oplus Z \in \W$.
\end{definition}

Intuitively, a problem $\Psf$ preserves a weakness property if every weak instance admits a weak solution, in the appropriate relative form. By a classical argument (see~\cite[Section 3.4]{patey2016reverse}), if~$\Psf$ preserves $\W$, then for every~$Z \in \W$, there exists an $\omega$-model of~$\RCA_0 + \Psf$ containing~$Z$. It follows that if $\Psf$ preserves~$\W$ but $\Qsf$ does not, then there is an $\omega$-model of~$\RCA_0 + \Psf$ which is not a model of~$\Qsf$.

The weakness properties considered to separate theorems from Ramsey theory are usually formulated in terms of variants of immunity. An infinite set~$A \subseteq \NN$ is \emph{immune} if it does not contain any infinite computable subset. If an instance of a Ramsey-like theorem $\RT^2_2(p)$ admits no computable solution, then any solution is immune, as the class of solutions is closed under infinite subsets. We shall consider two variants of immunity, inducing many preservation properties.

\begin{definition}
An \emph{array} is a collection of non-empty finite sets $\vec{F} = \langle F_n : n \in \NN \rangle$ such that $\min F_n > n$ for every~$n \in \NN$. An array is c.e. if the function which maps~$n$ to a canonical code of~$F_n$ is computable. An infinite set $A \subseteq \NN$ is \emph{$X$-hyperimmune} if for every $X$-c.e.\ array $\vec{F}$, there is some~$n \in \NN$ such that $X \cap F_n = \emptyset$.
\end{definition}

Hyperimmunity is a strong form of immunity, as one cannot even approximate an infinite subset by blocks. There exists a characterization of the Turing degrees of hyperimmune sets in terms of function domination.
A function $f : \NN \to \NN$ \emph{dominates} $g : \NN \to \NN$ if $f(n) \geq g(n)$ for every~$n \in \NN$.
A function $f$ is \emph{$X$-hyperimmune} if it is not dominated by any $X$-computable function. The degrees computing an $X$-hyperimmune set and those computing an $X$-hyperimmune function coincide.

\begin{definition}Fix $k \in \NN \cup \{\NN\}$.
A problem~$\Psf$ \emph{preserves $k$ hyperimmunities} if for every set~$Z$,
every family of $Z$-hyperimmune functions $\langle f_s : s < k \rangle$, and every $Z$-computable $\Psf$-instance~$X$, there exists a $\Psf$-solution~$Y$ to~$X$ such that each~$f_s$ is $Y \oplus Z$-hyperimmune.
\end{definition}

Preservation of $k$ hyperimmunities is a scheme of preservations in the sense of \Cref{def:preservation}. Thus, if a problem~$\Psf$ preserves~$k$ hyperimmunities for some~$k \in \NN \cup \{\NN\}$ but $\Qsf$ does not, then $\Psf$ does not imply $\Qsf$ over $\RCA_0$. These notions were formally introduced by Patey~\cite{patey2017iterative}, although the ideas were already present in the combinatorics of Lerman, Solomon and Towsner~\cite{lerman2013separating}. They were later systematically studied by Downey et al.~\cite{downey2022relationships}. In particular, preservation of 2 or more hyperimmunities is the most convenient tool to separate a statement from Ramsey's theorem for pairs and two colors. Patey~\cite{patey2017iterative} proved that $\EM$ preserves $\omega$ hyperimmunities, while $\RT^2_2$ (and $\ADS$) does not even preserve 2 hyperimmunities. Our last main theorem is a characterization of which Ramsey-like theorems preserve $\omega$ hyperim\-munities. The notions of irreducible and divergent pattern will be defined in \Cref{sect:general-properties}.

\begin{maintheorem}\label{omega-hyp-divergent-irreducible}
Let $p$ be a pattern. $\RT^2_2(p)$ preserves $\omega$ hyperimmunities if and only if $p$ contains a sub-pattern which is simultaneously divergent and irreducible.
\end{maintheorem}

The relevance of this theorem is justified by a follow-up paper by Le Houérou and Patey~\cite{houerou2025ramseylike} in which they prove that $\RT^2_2(p)$ implies $\RT^2_2$ over $\omega$-models if and only if $\RT^2_2(p)$ does not preserve $\omega$ hyperimmunities.

We also prove similar characterization theorems for other notions of preserva\-tions, namely, preservation of $k$ 2-dimensional hyperimmunities, defined by Liu and Patey~\cite{liu2022reverse} to separate theorems from the Erd\H{o}s-Moser theorem. Although this notion might seem much more ad-hoc, it is arguably the natural property induced by the combinatorics of~$\EM$. This will in particular used in \Cref{sec:hem} to separate from $\EM$ an asymmetric version of the Erd\H{o}s-Moser theorem ($\HEM$), in which the solution needs to be transitive for only one color. This statement~$\HEM$, together with the Chain AntiChain principle ($\CAC$), forms another decomposition of~$\RT^2_2$. This decomposition is of particular interest, as $\CAC$ is the strongest consequence of~$\RT^2_2$ for which the first-order part is known (see Chong, Slaman and Yang~\cite{chong2021pi11}).

\subsection{Organization of the paper}

We start by proving in \Cref{sec:avoiding-any} two uniform lower bounds on the strength of~$\RT^2_2(p)$ for any pattern~$p$. Then, we study basic properties of Ramsey-like theorems in \Cref{sect:general-properties}, and define in particular the join operator~$\uplus$. Then, \Cref{sect:omega-hyp,sect:2dim-hyp,sect:omega-2dim-hyp} are devoted to the characterization of which Ramsey-like theorems preserve $\omega$ hyperimmunities, one 2-dimensional hyperimmunity and $\omega$ 2-dimen\-sional hyperimmunities, respectively. Last, \Cref{sec:hem} studies an asymmetric version of the Erd\H{o}s-Moser theorem, namely, $\HEM$.

\section{Avoiding any pattern}\label[section]{sec:avoiding-any}

We now prove two lower bounds to the strength of Ramsey-like theorems, in terms of diagonally non-computable functions and probabilistic algorithms. It follows that none of the Ramsey-like theorems are computably trivial, that is, for every pattern~$p$, there exists a computable coloring $f : [\NN]^2 \to 2$ with no computable infinite set avoiding~$p$. Interestingly, both lower bounds are uniform, in that the constructed coloring does not depend on the choice of~$p$.

\begin{definition}
A function $f : \NN \to \NN$ is \emph{diagonally non-$X$-computable} ($X$-DNC) if for every~$e \in \NN$, $f(e) \neq \Phi^X_e(e)$.
\end{definition}

Diagonally non-computable degrees play an important role in computability theory. They admit many characterizations, in terms of effective immunity (\cite{jockusch1989recursively}), fixpoint-free functions (\cite{jockusch1989recursively}), infinite subsets of random sequences (\cite{kjos2009infinite,greenberg2009lowness}, see \cite[Theorem 8.10.2]{downey2010algorithmic}) or Kolmogorov complexity (\cite{kjos2011kolmogorov}), among others. We shall actually use here the characterization in terms of fixpoint-free functions.

\begin{definition}
A function $f : \NN \to \NN$ is \emph{$X$-fixpoint free} ($X$-FPF) if for every $e \in \NN$, $W^X_{f(e)} \neq W^X_e$.
\end{definition}

By Jockusch~\cite{jockusch1989recursively}, the degrees computing an $X$-DNC function and those computing an $X$-FPF function coincide. 
Given a pattern~$p$ of size at least~2, we let $p^-$ be the restriction of~$p$ to the domain~$[|p|-1]^2$. We are now ready to prove our first main theorem.


\begin{repmaintheorem}{mainthm:dnczp}
    There exists a computable coloring $f : [\NN]^2 \to 2$ such that every infinite set~$H \subseteq \NN$ avoiding any pattern for $f$ computes a $\emptyset'$-DNC function.
\end{repmaintheorem}
\begin{proof}
Let us build the function $f : [\NN]^2 \to 2$ using a no-injury priority construc\-tion in the style of Jockusch~\cite[Theorem 3.1]{jockusch1972ramsey}.
Each requirement will be of the form $\R_{p,e  }$, where~$p$ is a pattern and $e$ a Turing index. The requirements are ordered using the Cantor pairing function $\langle p, e\rangle$, the least pair being the requirement of higher priority. At each stage~$s$, the construction will define the value of $f$ for each pair $\{x, s\}$ with $x < s$. Within a stage~$s$, each requirement $\R_{p,e  }$ will put a restrain on at most $|p|-1$ many elements $x < s$. Let $h : \NN \to \NN$ be the following computable function:
$$h(\langle p,e  \rangle) = \sum_{\langle q, i\rangle < \langle p,e \rangle} |q^-|
$$
Given a requirement $\R_{p,e  }$ acting at a stage~$s$, $h(\langle p,e \rangle)$ represents the maximum number of elements~$x < s$ restrained by a requirement of higher priority.
The function $f$ will satisfy the following requirements, for every pattern $p$ of size at least~2, and every code $e$:
   \begin{quote}
        $\R_{p,e  }$: Either $p^{-}$ appears at most $h(\langle p,e \rangle)$ many times in $W^{\emptyset'}_e$ with pairwise disjoint blocks of elements or there is no infinite set $H \supseteq W^{\emptyset'}_e$ avoiding the pattern $p$.
    \end{quote}

    For each~$e$ and $s$, let $W_{e}^{\emptyset'}[s] = \{ x < s : \Phi^{\emptyset'[s]}_e(x)[s]\downarrow \}$. Note that $W_{e}^{\emptyset'}[s] \leq s$. The \emph{$(e,s)$-age} of an element~$x$ is the biggest $t \leq s$ such that $x \in W_{e}^{\emptyset'}[r]$ for every~$r \in \{s-t, \dots, s\}$. Given a pattern~$p$, a Turing index~$e$ and an approximation stage~$s$, let $F_{p, e,s}$ be the set of $h(\langle p,e \rangle)+1$ many pairwise disjoint $(e,s)$-oldest blocks of elements realizing $p^{-}$, if they exist. If not, then $F_{p, e,s}$ is not defined. 
    In particular, $\{ F_{p, e,s} \}_{s \in \NN}$ is such that if $p^{-}$ appears at least $h(\langle p,e \rangle)+1$ many times in $W_e^{\emptyset'}$ with pairwise disjoint blocks of elements, then $\lim_s F_{p, e, s}$ exists and contains such witnesses.

    \smallskip

    \textbf{Construction}.
    We construct $f$ by stages. At the beginning of each stage, all the restraints are released. At stage $0$, $f_0$ is the empty function. At stage~$s > 0$, suppose $f_{s-1}$ is defined over $[\{0,\dots, s-1\}]^2$. We define $f_{s}(x, s)$ for every $x < s$ as follows:

    For each $\langle p,e \rangle < s$, if $F_{p, e,s}$ is defined, then pick a block $E_{p, e } \in F_{p, e,s}$ which does not contain any element restrained by a requirement of higher priority. Then, put a restraint on all the elements of $E_{p, e }$. 
    
    Note that $\max E_{p,e  } \leq s$. Also note that if $p^{-}$ appears at least $h(\langle p,e \rangle)+1$ many times in $W^{\emptyset'}_e$ with pairwise disjoint blocks of elements, then, as mentioned above, $F_{p, e, s}$ is defined for cofinitely many stages~$s$, and by a cardinality argument, such a block $E_{p, e } \in F_{p, e,s}$ exists, as there are at most $h(\langle p, e\rangle)$ elements restrained by requirements of higher priority. When~$F_{p, e, s}$ stabilizes, the elements restrained by higher priority arguments might still change infinitely often if, for instance, $W_i^{\emptyset'} = \emptyset$ for some~$i < e$. It follows that even after the stabilization stage, the choice of $E_{p, e}$ within $F_{p, e,s}$ might change infinitely often.

    Let $a_i$ be the $i$-th element of $E_{p,e  }$ in the natural order over the integers, for $i \geq 0$. Let $f(a_i,s) = p(i,|p|-1)$. Since $E_{p,e  }$ realizes $p^-$, $E \cup \{s\}$ realizes $p$.
    Then, assign any color to the unassigned pairs to complete $f_{s}$ over $[\{0,\dots, s\}]^2$. This completes the construction of~$f$.
    \smallskip

\textbf{Verification}. We claim that $f$ satisfies all the requirements. Indeed, fix some~$\langle p,e \rangle$ such that $p^{-}$ appears at least $h(\langle p,e \rangle)+1$ many times in $W^{\emptyset'}_e$ with pairwise disjoint sets of elements. Then, there is some~$t$ such that for every~$s \geq t$, $F_{p, e,s}$ contains such witnesses. Let $H$ be an infinite superset of~$W_e^{\emptyset'}$. Pick $s \in H$ larger than~$t$ and let $E_{p, e } \in F_{p, e,s}$ be the set chosen at stage~$s$. Then by construction, $E_{p, e } \cup \{s\}$ is a subset of~$H$ realizing $p$, so $H$ does not avoid the pattern~$p$.
    \smallskip

    We now claim that every infinite set avoiding any pattern for $f$ computes a $\emptyset'$-DNC function. We proceed by induction over an enumeration of the avoided pattern $p$, monotonous in the size of the patterns. Let $H$ be an infinite set avoiding $p$. 
    For the base case, $H$ cannot avoid the unique pattern~$p$ with only one node, so the property vacuously holds. Let $p$ be a pattern with at least two nodes. We consider two cases:
    \begin{itemize}
        \item \emph{Case 1:} $H$ contains arbitrarily many pairwise disjoint subsets realizing $|p^-|$. Then, for all $e \in \NN$, let $G_e$ be a union of $h(\langle p,e \rangle)+1$ such subsets. Since $G_e$ can be extended into $H$ an infinite set avoiding $p$, and contains $h(\langle p,e \rangle)+1$ pairwise disjoints subsets realizing $p^-$,  it cannot be equal to $W^{\emptyset'}_e$ by $\R_{p,e   }$. Consider the function $g : \NN \to \NN$ such that for every~$e \in \NN$, $W^{\emptyset'}_{g(e)} = G_e$. Such a function exists since $G_e$ is finite, hence $\emptyset'$-c.e. Moreover, the function $g$ is $H$-computable. This yields that for all $e$, $W^{\emptyset'}_{g(e)} \neq W^{\emptyset'}_e$. The function $g$ is an $H$-computable fixpoint-free function relative to~$\emptyset'$, proving $H$ computes a DNC function relative to~$\emptyset'$ (see Jockusch~\cite{jockusch1989recursively}). 
        \item \emph{Case 2:} Case 1 does not hold, i.e. finitely many disjoints subsets of $H$ realize $p^-$. Then let $t$ be such that no subset of $H \setminus \{0, \dots, t\}$ realizes $p^-$, and let $\hat H$ be $H \setminus \{ 0, \dots, t \}$. Then, no subset of $\hat H$ realizes $p^-$, hence, by induction hypothesis $\hat H$ computes a $\emptyset'$-DNC function, and since $H$ computes $\hat H$, $H$ also computes a $\emptyset'$-DNC function. 
    \end{itemize}

\end{proof}

From a reverse mathematical viewpoint, the previous construction uses $\BSig_2$ to obtain a robust formalization of computation relative to the jump, and uses once $\ISig_2$ over the size of the avoided pattern in the verification. Thus, if one considers a fixed pattern of standard size, we obtain the following proposition. Here, $\DNCS{n}$ is the statement \qt{For every set $X$, there is an $X^{(n-1)}$-DNC function}.

\begin{proposition}\label[proposition]{prop:rca-bsig2-rt22p-2dnc}
$\RCA_0 + \BSig_2 \vdash \RT^2_2(p) \to \DNCS{2}$.
\end{proposition}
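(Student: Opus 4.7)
The plan is to recast the proof of Main Theorem~\ref{mainthm:dnczp} in the setting of $\RCA_0 + \BSig_2$ with a single fixed standard pattern $p$, relativizing the construction to an arbitrary parameter~$X$ so that an infinite $f$-avoiding set will compute an $X'$-DNC function. Since $\DNCS{2}$ asserts that for every set~$X$ there is an $X'$-DNC function, it is enough to produce, from any $X$ in a model $\M \models \RCA_0 + \BSig_2$ and any infinite $f$-avoiding $H \in \M$ for a suitable $X$-computable coloring~$f$, an $(H \oplus X)$-computable $X'$-DNC function.

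Inside such a fixed $\M$, I would perform the same no-injury priority construction as before, but with the requirements $\R_{q,e}$ restricted to sub-patterns $q$ of $p$ and indices $e$ for $X'$-partial computations. Because $p$ is standard, it has standard-finitely many sub-patterns, so the priority ordering is standard and there is no bookkeeping difficulty in~$\M$. The role of $\BSig_2$ is to provide a robust formalization of $X'$-c.e.\ sets: it ensures that the $\Sigma^0_2(X)$-approximation of each $W^{X'}_e$ behaves correctly, which is what is needed for the $(e,s)$-age of an element to be well-defined and for $F_{q,e,s}$ to stabilize whenever $q^{-}$ admits sufficiently many pairwise disjoint realizations in $W^{X'}_e$.

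Given an infinite $H \in \M$ avoiding the pattern~$p$ for~$f$, the verification proceeds by external induction on the standard integer $|p|$, exactly mirroring the two cases in the proof of Main Theorem~\ref{mainthm:dnczp}. The base case is vacuous. In Case~1, one obtains an $(H \oplus X)$-computable $X'$-fixpoint-free function, hence an $X'$-DNC function via Jockusch's characterization (which is provable in $\RCA_0$). In Case~2, finitely many pairwise disjoint subsets of~$H$ realize $p^{-}$; here $\BSig_2$ delivers a threshold~$t$ beyond which no such realization occurs in~$H$, so the external induction hypothesis applies to the cofinite subset $\hat H = H \setminus [0, t]$.

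The only delicate point, and the main thing to check carefully, is that the original use of $\ISig_2$ in the verification of Main Theorem~\ref{mainthm:dnczp} was confined to this induction on the size of~$p$; fixing $p$ of standard size moves that induction to the meta-theory and thus removes the need for $\ISig_2$ entirely, leaving only stability arguments about $W^{X'}_e$, all of which go through on the basis of $\BSig_2$ alone.
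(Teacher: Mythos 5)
Your proposal is correct and follows essentially the same route as the paper: the paper justifies \Cref{prop:rca-bsig2-rt22p-2dnc} precisely by observing that the construction of \Cref{mainthm:dnczp} relativizes, that $\BSig_2$ suffices for the formalization of computation relative to the jump, and that the only use of $\ISig_2$ is the induction on $|p|$ in the verification, which becomes external (hence free) once $p$ is a fixed pattern of standard size. Your write-up just spells out this relativization and the externalization of the induction in more detail than the paper does.
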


Note that the previous lower bound does not rule out the existence of a probabilistic algorithm to find solutions to computable instances of $\RT^2_2(p)$, as there exists a probabilistic algorithm to compute a $\emptyset'$-DNC function (see \Cref{sec:rt22}). More precisely, for every set~$X$, the measure of oracles computing an $X$-DNC function is~1. We now prove that this lower bound is not tight, in that there exists a computable coloring such that no probabilistic algorithm can compute an infinite set avoiding any pattern.

\begin{theorem}\label[theorem]{thm:measure-avoid-pattern}
There is a computable coloring $f : [\NN]^2 \to 2$ such that the measure of oracles computing an infinite set avoiding any pattern for~$f$ is~0.
\end{theorem}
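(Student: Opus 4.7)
The plan is to adapt the priority construction of \Cref{mainthm:dnczp} to a measure-theoretic setting. We replace the enumeration of $\emptyset'$-c.e.\ sets $\{W^{\emptyset'}_e\}$ by the parametric family of oracle Turing functionals $\{\Phi_e^X\}$, and track along the construction the measure of oracles still producing a candidate $f$-avoider. For every pattern $p$ with $|p|\geq 2$, every Turing index $e$, and every positive rational $\varepsilon$, we set up the requirement
\[
\mathcal{R}_{p,e,\varepsilon} : \mu\bigl(\{X \in 2^{\NN} : \Phi_e^X \text{ is an infinite } f\text{-avoider of } p\}\bigr) \leq \varepsilon.
\]
Simultaneously satisfying these countably many requirements forces the measure of oracles computing an infinite $f$-avoider of any pattern to be $0$, by a countable union over $(p,e)$.

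The key combinatorial input is a Ramsey-style dichotomy, proven by induction on $|p|$: there is a function $N(p,k)$ such that every $f$-avoider of $p$ of size $\geq N(p,k)$ either contains $k$ pairwise disjoint $f$-realizers of $p^-$, or contains an $f$-avoider of some proper sub-pattern of $p$ of size $\geq k$. This mirrors the Case~1 / Case~2 dichotomy in the verification of \Cref{mainthm:dnczp}.

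The action for a requirement $\mathcal{R}_{p,e,\varepsilon}$ at a stage $s$ proceeds as follows. Whenever the (lower semi-computable) measure of the set of oracle strings $\sigma$ such that $\Phi_e^\sigma$ outputs an $f$-avoider $F_\sigma$ of $p$ of size $\geq N(p,k)$ exceeds $\varepsilon$, we extract finitely many pairwise incomparable witnesses $\sigma_1,\dots,\sigma_m$ with $\sum_i 2^{-|\sigma_i|} > \varepsilon$. With $k$ chosen large enough to absorb the higher-priority restraints plus $m\cdot|p|$, the combinatorial lemma lets us either greedily pick pairwise disjoint $p^-$-realizers $E_{\sigma_i}\subseteq F_{\sigma_i}$ avoiding higher-priority restraints---then defining $f(a,s)=p(j,|p|-1)$ whenever $a$ is the $j$-th element of some $E_{\sigma_i}$ turns every $E_{\sigma_i}\cup\{s\}$ into an $f$-realizer of $p$ and simultaneously kills all $m$ witnesses---or else some $F_{\sigma_i}$ is an avoider of a strictly smaller sub-pattern, which is handled by the inductive hypothesis.

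The verification then runs parallel to \Cref{mainthm:dnczp}: the resulting $f$ is computable since every action manipulates only finite, computable data, and once we act on $\mathcal{R}_{p,e,\varepsilon}$ with $k$ large enough its measure bound is permanently preserved. The main obstacle is the combined measure-and-combinatorics bookkeeping: ensuring that $k$ can be chosen uniformly in advance to absorb the union of restraints still active from all higher-priority requirements, while simultaneously approximating the measure to enough precision to decide when a requirement becomes active. This is reminiscent of Liu's~\cite{liu2012rt22} measure-theoretic priority arguments, but lighter here since we diagonalize against finite realizations of $p$ rather than against positive-measure $\Pi^0_1$ classes.
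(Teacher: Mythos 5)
There is a genuine gap at the heart of your killing step. In \Cref{mainthm:dnczp}, colouring only the single column $s$ suffices because the requirement there speaks about \emph{every infinite superset} $H \supseteq W_e^{\emptyset'}$, and the action is repeated at every late stage, so whichever element $s$ of $H$ comes along, the block chosen at stage $s$ completes the pattern inside $H$. In the measure setting the avoider is exactly the computed set $\Phi_e^X$, not an arbitrary superset: after you set $f(a,s)=p(j,|p|-1)$ for $a \in E_{\sigma_i}$, the set $E_{\sigma_i}\cup\{s\}$ $f$-realizes $p$, but there is no reason whatsoever that $s \in \Phi_e^X$ for the oracles $X \supseteq \sigma_i$, so none of the $m$ witnesses is killed. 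To defeat an oracle you must arrange that the realizer completes with \emph{every} sufficiently large future element, i.e.\ you must permanently commit the elements of the chosen block to the limit colours $p(j,|p|-1)$; this is exactly the mechanism the paper uses (\qt{commit to have limit $p(i,t+1)$}), and it changes the bookkeeping substantially, since commitments are no longer released at each stage and conflicting commitments between requirements must be managed by markers and finite injury. Two further points need repair even granting limit commitments: a single action only removes a chunk of oracles of measure $>\varepsilon$, leaving up to measure $1-\varepsilon$ of potentially bad oracles, so one action does not secure $\mu \leq \varepsilon$ and each $\mathcal{R}_{p,e,\varepsilon}$ must act repeatedly (finitely often, roughly $1/\varepsilon$ times); and the \qt{proper sub-pattern} branch of your dichotomy cannot be \qt{handled by the inductive hypothesis} inside the construction --- the induction on $|p|$ has to live in the verification, which forces you to weaken the requirement to only bound the measure of oracles whose output contains many disjoint $p^-$-realizers, and to argue separately (via a countable union over tails and indices) that the remaining oracles are null by the case of smaller patterns.

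For contrast, the paper's proof sidesteps all of this: its requirement asserts that a fixed measure $\tfrac{1}{2|p|}$ of oracles have finite output or output an $f$-realizer of $p$; the strategy never inspects whether outputs avoid anything, but simply waits until measure $>1-\tfrac{1}{2|p|}$ of oracles enumerate an element of the current interval $[m_{p,e},s]$, locks that interval as the next block, and commits its elements to the limit colours needed to build $p$ block by block; after $|p|$ rounds, measure $>\tfrac12$ of the oracles hit every block and hence realize $p$. The global measure-zero conclusion is then recovered from the fixed positive threshold via the Lebesgue density theorem applied to a shifted index $\Phi_a^X=\Phi_e^{\sigma\cdot X}$. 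Your extraction-of-realizers route is not hopeless, but it only becomes an actual proof once the limit-commitment mechanism, the repeated-action measure accounting, and the relocation of the pattern induction into the verification are supplied.
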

\begin{proof}
The coloring $f : [\NN]^2 \to 2$ is built using a finite-injury priority construction, satisfying the following requirements for every pattern~$p$ and every Turing index~$e$:
\begin{quote}
    $\R_{p,e}$: $\mu(\{ X \in \cs : W_e^X \mbox{ is finite or } f\mbox{-realizes } p \}) \geq \frac{1}{2|p|}$.
\end{quote}
We first claim that if all $\R$-requirements are satisfied, then $f$ satisfies the statement of the theorem, using the contrapositive.
Suppose that the measure of oracles computing an infinite set avoiding any pattern for~$f$ is positive.
Then, since there are countably many patterns and countably many functionals, there is some pattern~$p$ and some Turing index~$e$ such that the measure of oracles~$X$ such that $\Phi_e^X$ is infinite and $f$-avoids~$p$ is positive. Indeed, a countable union of classes of measure~0 is again of measure~0. By the Lebesgue density theorem, there is some string $\sigma \in \bstr$ such that  the measure of oracles $X$ such that $\Phi_e^{\sigma \cdot X}$ is infinite and $f$-avoids~$p$ is more than $1-\frac{1}{2|p|}$.
Let~$a$ be a Turing index such that $\Phi_a^X = \Phi_e^{\sigma \cdot X}$. Then the requirement $\R_{p, a}$ is not satisfied. 
\smallskip

The strategies are given a priority order based on Cantor's pairing function $\langle p, e\rangle$, the smaller value being of higher priority. Each requirement $\R_{p, e}$ will be given a movable marker~$m_{p_e}$, starting at~0, such that if $\R_{q, i}$ is of greater priority than $\R_{p, e}$, then $m_{q, i} \leq m_{p, e}$. Moreover, at any stage~$s$, $m_{p,e}$ is greater than any value restrained by $\R_{p, e}$ or any requirement of higher priority.
\smallskip

\textbf{State of $\R_{p,e}$.}
Each strategy $\R_{p, e}$ will be given a \emph{state}, which is a finite sequence $\langle F_0, \dots, F_{t-1}\rangle$ of non-empty finite sets, with $t \leq |p|$ and such that $\max F_i < \min F_{i+1}$. Such sequence will satisfy the following properties:
\begin{enumerate}
    \item[(P1)] For every~$x_0 \in F_0, \dots, x_{t-1} \in F_{t-1}$, $\{x_0, \dots, x_{t-1}\}$ $f$-realizes $p \uh_t$
    \item[(P2)] $\mu(\{ X : W_e^X \cap F_i \neq \emptyset \}) > 1-\frac{1}{2|p|}$
\end{enumerate}
Over time, new sets will be stacked to this list, which will be reset only if a strategy of higher priority injures it. Initially, each strategy is given the empty sequence as state.

\smallskip

\textbf{Strategy for $\R_{p,e}$.}
A requirement $\R_{p, e}$ \emph{requires attention at stage~$s$} if its state has length less than~$|p|$ and there is a finite set $D \subseteq 2^{\leq s}$ such that $\sum_{\sigma \in D} 2^{-|\sigma|} > 1-\frac{1}{2|p|}$ and for every~$\sigma \in D$, $W_e^\sigma[s] \cap [m_{p,e}, s] \neq \emptyset$. In other words, $\R_{p,e}$ requires attention at stage~$s$ if the measure of oracles $X$ such that $W_e^X[s]$ outputs an element in $[m_{p,e}, s]$ is greater than $1-\frac{1}{2|p|}$.

If $\R_{p,e}$ receives attention at stage~$s$ and is in state~$\langle F_0, \dots, F_{t-1} \rangle$ (by convention, if the state is the empty sequence, $t = 0$), then, letting $F_t = [m_{p,e}, s]$, its new state is $\langle F_0, \dots, F_t \rangle$. The marker $m_{p,e}$ is moved to $s+1$, and the markers of all strategies of lower priorities is moved further, accordingly. All the strategies of lower priorities are injured, and their state is reset to the empty sequence. If $t < |p|-1$, then for every~$i \leq t$, all the elements of $F_i$ commit to have limit $p(i, t+1)$.
\smallskip

\textbf{Construction.}
The global construction goes by stages, as follows. Initially, $f$ is nowhere-defined.
At stage~$s$, suppose $f$ is defined on $[\{0, s-1\}]^2$. If some strategy requires attention at stage~$s$, letting $\R_{p, e}$ be the strategy of highest priority among these, give it attention and act accordingly. 
In any case, for every~$x < s$, if $x$ is committed to have some limit~$c < 2$, then set $f(x, s) = c$.
Otherwise, set $f(x, s) = 0$. Go to the next stage.
\smallskip

\textbf{Verification.}
One easily sees by induction on the strategies that every strategy acts finitely often, and therefore each strategy is finitely injured by a strategy of higher priority. It follows that each requirement has a limit state and that $m_{p,e}$ reaches a limit value.

We claim by induction over~$t$ that at any stage~$s$, if a requirement $\R_{p,e}$ has state $\langle F_0, \dots, F_{t-1}\rangle$, then the sequence satisfies (P1). This is trivially the case for~$t \leq 1$. Suppose that $t > 1$, and let $s_0 < s$ be a prior stage at which $\R_{p,e}$ was first given the state $\langle F_0, \dots, F_{t-2}\rangle$. In particular, $\R_{p,e}$ was not injured between stage~$s_0$ and $s$, otherwise its marker~$m_{p,e}$ would have been moved to a value greater than~$s_0$, and it would never get the stage $\langle F_0 \rangle$ again. At stage~$s_0$, for every~$i \leq t-2$, $\R_{p,e}$ committed all the elements of~$F_i$ to have limit $p(i, t-1)$ and moved the marker $m_{p,e}$ to $s_0+1$. In particular, $\min F_{t_0} > s_0$. Since $\R_{p,e}$ was not injured between~$s_0$ and $s$ and $\min F_{t-1} > s_0$, then for every~$i \leq t-2$, $x \in F_i$ and $y \in F_{t-1}$, $f(x, y) = p(i,t-1)$. Thus, together with the induction hypothesis, (P1) holds for the sequence $\langle F_0, \dots, F_{t-1}\rangle$.

We claim that each requirement $\R_{p,e}$ is satisfied. Let $s$ be stage after which $m_{p, e}$ reaches its limit value. In particular, the state of~$\R_{p, e}$ also reached its limit, and none of the strategies of higher priority require attention after~$s$. Suppose first that the limit state of~$\R_{p, e}$ has length less than~$|p|$. This means that $\R_{p, e}$ does not require attention attention after stage~$s$, so the measure of oracles $X$ such that $W_e^X$ outputs an element greater than or equal to~$m_{p,e}$ is at most $1-\frac{1}{2|p|}$. Thus, $\mu(\{ X \in \cs : W_e^X \mbox{ is finite } \}) \geq \frac{1}{2|p|}$, and the requirement is therefore satisfied. Suppose now that the limit state of~$\R_{p,e}$ has length~$|p|$. By (P2), for each~$i < |p|$, $\mu(\{ X : W_e^X \cap F_i \neq \emptyset \}) > 1-\frac{1}{2|p|}$. It follows that 
$$
\mu(\{ X : (\forall i < |p|) W_e^X \cap F_i \neq \emptyset \}) > 1-\frac{|p|}{2|p|} = 1/2
$$
Thus, by (P1), $\mu(\{ X \in \cs : W_e^X f\mbox{-realizes } p \}) \geq \frac{1}{2}$, so $\R_{p,e}$ is again satisfied. This completes the proof of \Cref{thm:measure-avoid-pattern}.
\end{proof}

\section{General properties}\label[section]{sect:general-properties}

This section introduces some basic operators and definitions relative to patterns. This lays the groundwork necessary for the following sections.
Let us first remark that the computational strength of a pattern lies in its variation of color, and not in the colors themselves. In the case of 2-colorings, every pattern~$p$ has a \emph{dual} pattern~$\overline{p}$, obtained by flipping the color of every pair :  $\forall x,y < |p|,\ \overline{p}(x,y) = 1 - p(x,y)$. 

\begin{lemma}
For every pattern~$p$, $\RCA_0 \vdash \RT^2_2(p) \leftrightarrow \RT^2_2(\bar p)$. 
\end{lemma}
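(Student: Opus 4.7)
The plan is to use a direct duality argument. I would define, for any coloring $f : [\NN]^2 \to 2$, the dual coloring $\bar f(x,y) := 1 - f(x,y)$, which is computable uniformly in~$f$, and observe the key invariance:

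For every finite set $F = \{x_0 < \dots < x_{\ell-1}\}$ and every pattern~$p$ of length~$\ell$, $F$ $f$-realizes~$p$ if and only if $F$ $\bar f$-realizes~$\bar p$, since $f(\{x_i,x_j\}) = p(i,j)$ iff $\bar f(\{x_i,x_j\}) = 1 - p(i,j) = \bar p(i,j)$. Consequently, an infinite set $H$ $f$-avoids~$p$ if and only if $H$ $\bar f$-avoids~$\bar p$.

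With this equivalence in hand, the two implications are symmetric. For $\RT^2_2(p) \to \RT^2_2(\bar p)$: given an arbitrary instance $f$ of $\RT^2_2(\bar p)$, form the computable instance $\bar f$ of $\RT^2_2(p)$, apply $\RT^2_2(p)$ to obtain an infinite $H$ which $\bar f$-avoids~$p$, and conclude by the invariance (applied to $\bar f$ and $p$, using $\bar{\bar f}=f$ and $\bar{\bar p}=p$) that $H$ $f$-avoids~$\bar p$. The reverse implication is identical with the roles of $p$ and $\bar p$ swapped.

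There is no real obstacle here: the entire content is the bookkeeping that the "$f$-realizes" relation commutes with simultaneous complementation of coloring and pattern, and this is an immediate unfolding of definitions. Formalizing in $\RCA_0$ requires nothing beyond $\Delta^0_1$-comprehension, since $\bar f$ is computable in~$f$ uniformly.
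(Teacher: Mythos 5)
Your proposal is correct and is essentially the paper's own argument: the paper likewise passes to the complemented coloring $g(x,y) = 1 - f(x,y)$ and observes that a set avoiding $p$ for the complemented coloring avoids $\bar p$ for the original one, with the two directions being symmetric. Your explicit remark that the ``$f$-realizes'' relation commutes with simultaneous complementation of coloring and pattern is just the unfolded justification of that same step, and the observation that everything is $\Delta^0_1$ in $f$ settles the $\RCA_0$ formalization as in the paper.
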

\begin{proof}
We prove $\RT^2_2(p) \to \RT^2_2(\bar p)$ as both statements play a symmetric role.
Let $f : [\NN]^2 \to 2$ be an instance of $\RT^2_2(\bar p)$.
Let $g : [\NN]^2 \to 2$ be defined by $g(x, y) = 1-f(x, y)$. Every infinite set $g$-avoiding $p$ also $f$-avoids $\bar p$.
\end{proof}


Another relation of importance between patterns is the subpattern relation: a pattern $q$ is a \emph{subpattern} of $p$ if there exists an injective function $g : |q| \to |p|$ such that for all $x,y < |q|,\ q(x,y) = p(g(x),g(y))$. 

\begin{lemma}\label[lemma]{lem:sub-pattern-implication}
    Let $p$ and $q$ be two patterns. If $q$ is a sub-pattern of $p$, then  $\RT^2_2(q)$ implies  $\RT^2_2(p)$.
\end{lemma}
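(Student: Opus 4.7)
The plan is to observe that instances of $\RT^2_2(p)$ and $\RT^2_2(q)$ are the same objects --- 2-colorings $f : [\NN]^2 \to 2$ --- so it suffices to convert a solution to $\RT^2_2(q)$ applied to $f$ into a solution to $\RT^2_2(p)$ applied to $f$. The guiding intuition is that avoiding a smaller pattern is a stronger condition: a realization of $p$ would necessarily contain, via the embedding $g$, a realization of $q$.

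Concretely, fix the injective embedding $g : |q| \to |p|$ witnessing that $q$ is a subpattern of $p$, so that $q(i,j) = p(g(i), g(j))$ for all $i,j < |q|$. Since the paper's definition of $f$-realization is sensitive to the natural order of the witnessing set, I would take $g$ to be order-preserving (this is the interpretation of ``subpattern'' consistent with the realization convention). Given any instance $f : [\NN]^2 \to 2$ of $\RT^2_2(p)$, I apply $\RT^2_2(q)$ to the same $f$ to obtain an infinite set $H \subseteq \NN$ that $f$-avoids $q$, and then claim that $H$ also $f$-avoids $p$.

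The verification is by contradiction. Suppose some finite set $F = \{x_0 < \dots < x_{|p|-1}\} \subseteq H$ $f$-realizes $p$. Consider $F' = \{x_{g(0)} < x_{g(1)} < \dots < x_{g(|q|-1)}\}$; since $g$ is order-preserving, this is already listed in the natural order, and clearly $F' \subseteq F \subseteq H$. For every $i < j < |q|$,
\[
f(\{x_{g(i)}, x_{g(j)}\}) \;=\; p(g(i), g(j)) \;=\; q(i, j),
\]
so $F'$ $f$-realizes $q$, contradicting the defining property of $H$.

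I do not expect any real obstacle: the lemma is essentially a direct unfolding of definitions, encoding the monotonicity of avoidance with respect to the subpattern order. The only delicate point is the choice of an order-preserving $g$, which is implicit in the framework; if one wanted to be strict about the paper's injective-only definition, one could either restrict to the subpatterns admitting an order-preserving embedding, or replace $q$ by the induced relabeling, neither of which changes the substance of the argument.
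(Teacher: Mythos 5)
Your proof is correct and follows essentially the same route as the paper's: apply $\RT^2_2(q)$ to the given coloring and observe that any $f$-realization of $p$ inside the solution would yield an $f$-realization of $q$ via the embedding, contradicting avoidance. You merely spell out the embedding argument (and flag the order-preserving reading of ``sub-pattern,'' which the paper leaves implicit) where the paper states it in one line.
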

\begin{proof}
Let $f : [\NN]^2 \to 2$ be a coloring and $H$ be an infinite set $f$-avoiding $q$.
Since $q$ is a sub-pattern of $p$, any set $f$-realizing $p$ contains a subset $f$-realizing~$q$,
so $H$ $f$-avoids~$p$.
\end{proof}


Remember that Ramsey-like theorems are non-disjunctive statements. As such, the formalism seems to capture a very restrained family of statements from Ramsey theory, as many of them are naturally stated in a disjunctive form. For instance, Ramsey's theorem for pairs states the existence of an infinite set avoiding any of the two constant patterns of size~2. Thankfully, the following join operator enables to cast disjunctive statements into the framework.


\begin{definition}
Let $p$ and $q$ be two patterns. Then \emph{join} $p \uplus q$ is the pattern of size $|p|+|q|-1$
defined for every~$x < y < |p|+|q|-1$ by
$$
(p \uplus q)(x, y) = \left\{ \begin{array}{ll}
    p(x, y) & \mbox{ if } y < |p|\\
    q(x-|p|+1, y-|p|+1) & \mbox{ if } x \geq |p|-1\\
    p(x, |p|-1) & \mbox{ if } x < |p|-1 \mbox{ and } y > |p|-1
\end{array}\right.
$$
\end{definition}

In other words, the join $p \uplus q$ is obtained by merging the right-most node of the graph of~$p$ with the left-most node of the graph of~$q$, and letting every arrow between some node~$x$ of~$p$ and some node $y$ of~$q$ have the value $p(x, |p|-1)$.

\begin{example}
The following pattern of length~4
\vspace{1.2cm}
$$
\psmatrix[colsep=1.5cm,rowsep=1.5cm,mnode=circle]
a & b & c & d
\everypsbox{\scriptstyle} 
\ncarc[arcangle=30]{1,1}{1,2}_{i} 
\ncarc[arcangle=30]{1,2}{1,3}_{j}
\ncarc[arcangle=30]{1,3}{1,4}_{\ell}
\ncarc[arcangle=50]{1,1}{1,3}_{k} 
\ncarc[arcangle=50, linestyle=dashed, linecolor=gray]{1,2}{1,4}_{j} 
\ncarc[arcangle=60, linestyle=dashed, linecolor=gray]{1,1}{1,4}_{k} 
\endpsmatrix
$$
is the join of the following two patterns
\vspace{0.7cm}
$$
\psmatrix[colsep=1.5cm,rowsep=1.5cm,mnode=circle]
a & b & c
\everypsbox{\scriptstyle} 
\ncarc[arcangle=30]{1,1}{1,2}_{i} 
\ncarc[arcangle=30]{1,2}{1,3}_{j}
\ncarc[arcangle=50]{1,1}{1,3}_{k} 
\endpsmatrix
\hspace{20pt}
\psmatrix[colsep=1.5cm,rowsep=1.5cm,mnode=circle]
c & d
\everypsbox{\scriptstyle} 
\ncarc[arcangle=30]{1,1}{1,2}_{\ell}
\endpsmatrix
$$ 
\end{example}

The following lemma, of central importance, states that the strength of avoiding a pattern of the form~$p \uplus q$ can be understood in terms of avoidance of~$p$ and~$q$.

\begin{lemma}\label[lemma]{lem:avoiding-join}
Let $p$ and $q$ be two patterns.
Let $f : [\NN]^2 \to 2$ be a 2-coloring and $H$ be an infinite set and avoiding $p \uplus q$.
Then there is an infinite $f \oplus H$-computable subset~$Y \subseteq H$ which avoids either $p$, or $q$.
\end{lemma}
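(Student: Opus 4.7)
The plan is to perform a case analysis on a family of auxiliary sets parametrized by possible embeddings of the first $|p|-1$ nodes of $p$ into $H$. For each increasing tuple $I = (i_0 < \cdots < i_{|p|-2})$ of elements of $H$, define
$$T_I \;=\; \{y \in H : y > i_{|p|-2} \text{ and } \{i_0,\ldots,i_{|p|-2},y\} \text{ $f$-realizes } p\}.$$
Each $T_I$ is uniformly $f \oplus H$-computable from the finite parameter $I$. I will show that either some $T_I$ is already the desired $Y$, or $Y$ can be built greedily as a subset of $H$.

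\textbf{Case A: some tuple $I$ has $T_I$ infinite.} Then $T_I$ is the desired $Y$: it is infinite and $f \oplus H$-computable, and I claim it $f$-avoids $q$. Suppose for contradiction that $\{w_0 < \cdots < w_{|q|-1}\} \subseteq T_I$ $f$-realizes $q$. Consider $S = \{i_0,\ldots,i_{|p|-2},w_0,\ldots,w_{|q|-1}\} \subseteq H$, of size $|p|+|q|-1 = |p \uplus q|$. I would verify that $S$ $f$-realizes $p \uplus q$, contradicting the hypothesis on $H$. Unpacking the piecewise definition of $\uplus$: for pairs of indices strictly below $|p|$, the first $|p|$ elements $\{i_0,\ldots,i_{|p|-2},w_0\}$ realize $p$ because $w_0 \in T_I$; for pairs of indices at least $|p|-1$, the last $|q|$ elements $\{w_0,\ldots,w_{|q|-1}\}$ realize $q$ by assumption; for a cross pair $(i_j, w_k)$ with $j < |p|-1$ and $k \geq 1$, the definition of $\uplus$ requires $f(i_j,w_k) = p(j,|p|-1)$, and this holds automatically because $w_k \in T_I$ means $\{i_0,\ldots,i_{|p|-2},w_k\}$ realizes $p$.

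\textbf{Case B: every tuple $I \subseteq H$ has $T_I$ finite.} Then I build $Y$ greedily. Set $y_0 = \min H$. Given $y_0 < \cdots < y_n$ such that $\{y_0,\ldots,y_n\}$ contains no $f$-realization of $p$, the set of ``bad'' $y \in H$ with $y > y_n$ — those for which $\{y_0,\ldots,y_n,y\}$ would contain a subset $f$-realizing $p$, necessarily one containing $y$ — is included in $\bigcup_I T_I$ as $I$ ranges over the $(|p|-1)$-subsets of $\{y_0,\ldots,y_n\}$. This is a finite union of finite sets by the case assumption, so the least valid $y_{n+1}$ exists and is found $f \oplus H$-computably. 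The resulting infinite $Y = \{y_n : n \in \NN\}$ is $f \oplus H$-computable and $f$-avoids $p$ by invariant.

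The main obstacle is the bookkeeping in Case A, where one must carefully verify that the composite set $S$ realizes $p \uplus q$ under the three-regime piecewise definition of $\uplus$. The clean point that makes this work is that membership of each $w_k$ in $T_I$ forces exactly the cross-color values demanded by the definition of $p \uplus q$, so no condition on the $w_k$'s beyond realizing $q$ is needed. Note also that the argument is non-uniform (we do not decide computably which case we are in), but this is harmless since the conclusion only asserts existence of an $f \oplus H$-computable $Y$.
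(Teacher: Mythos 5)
Your proof is correct. It has the same overall two-case shape as the paper's argument (either build a $p$-avoiding subset of $H$ greedily, or exhibit a subset of $H$ that must avoid $q$), but the dichotomy and the key mechanism in the second case are genuinely different. The paper splits on whether every finite $p$-avoiding subset of $H$ can be extended inside $H$: if not, it fixes a blocked finite set $F$, thins $H$ by the pigeonhole principle to a reservoir $Z$ on which $f(x,\cdot)$ is constant for each $x\in F$, and then transfers a hypothetical $q$-realization $Q\subseteq Z$ to a $p\uplus q$-realization using some $P\subseteq F$ with $P\cup\{\min Q\}$ realizing $p$ together with the stabilized cross colors. You instead split on whether some single $(|p|-1)$-tuple $I\subseteq H$ has infinitely many one-point completions to a $p$-realization; in that case the completion set $T_I$ already avoids $q$, because membership of each $w_k$ in $T_I$ forces exactly the cross colors $p(\cdot,|p|-1)$ demanded by the third clause of the definition of $\uplus$, so no stabilization step is needed, and in the opposite case the greedy construction succeeds because at each step the excluded candidates lie in a finite union of finite sets $T_J$. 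Your route is a bit more economical (no pigeonhole, and the witness $T_I$ is explicit and uniformly computable from $I$), while the paper's blocked-set-plus-stabilization formulation foreshadows the $(f,g)$-avoidance and stabilization machinery it reuses in the later forcing arguments; both arguments are non-uniform in the same harmless way, and both treat the degenerate lengths $|p|=1$ or $|q|=1$ only implicitly.
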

\begin{proof}
Suppose first that for every finite subset $F \subseteq H$ which avoids pattern $p$ there is some $z \in H \setminus F$ such that $F \cup \{ z \}$ avoids $p$. Then, by a greedy construction, one can $f \oplus H$-compute an infinite subset of $H$ which avoids $p$.

Contrarily, suppose that there exists a finite set $F \subseteq H$ which avoids $p$ such that for every $z \in H$, the set $F \cup \{z \}$ does not avoid $p$. By multiple applications of the pigeonhole principle, there exists an infinite $f \oplus H$-computable subset $Z \subseteq H$ such that $\min Z > \max F$ and such that for every $x \in F$, the function $f(x, \cdot)$ is constant over $Z$.

Now, suppose $q$ occurs in $Z$, as a finite set $Q \subseteq Z$. Then, by construction, there exists a finite set $P \subseteq F$ such that $P \cup \{ \min Q \}$ $f$-realizes $p$, and such that for all $x \in P$ and $y \in Q$, $f(x,y) = f(x, \min Q)$. This would make $p \uplus q$ occur in $F \cup Z \subseteq H$, contradicting assumption. This yields that $q$ does not occur in~$Z$.
\end{proof}

Note that given a join pattern $p \uplus q$, some colorings will yield sets avoiding $p$, and others $q$. As such, the previous lemma does not prove that $\RT_2^2(p \uplus q)$ implies $\RT_2^2(p) \lor \RT_2^2(q)$, but rather that $\RT^2(p \uplus q)$ implies the disjunctive statement \qt{For every 2-coloring of pairs $f$, there exists an infinite set $f$-avoiding either $p$ or $q$.}

Every pattern is the join of itself and the trivial pattern of length~1. By \Cref{lem:avoiding-join}, the avoidance of a pattern of the form $p \uplus q$ is reduced to the avoidance of the patterns~$p$ and~$q$. As a consequence, the patterns which cannot be expressed as a join of two smaller patterns should receive a particular attention. This motivates the following definition:

\begin{definition}
A pattern is \emph{reducible} if it is of the form $p \uplus q$, with $|p|, |q| \geq 2$.
Otherwise, it is \emph{irreducible}.
\end{definition}

The following technical lemma is essentially an explicit formulation of the notion of irreducibility by unfolding the definition. It will be useful in the later sections. 
\begin{lemma}\label[lemma]{lem:irreducible-is-extensible}
A pattern~$p : [\ell]^2 \to 2$ is irreducible if and only if for every 2-partition $F \sqcup G = \ell$ such that $F \neq \emptyset$, $\card G \geq 2$, and $F < G$, there is some~$x \in F$ and $y, z \in G$ such that $p(x, y) \neq p(x, z)$.
\end{lemma}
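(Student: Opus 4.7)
The plan is to unfold the definition of the join operator and rephrase both sides of the equivalence as statements parametrised by a common ``split index'' $k$. First, I observe that every partition $F \sqcup G = [\ell]$ appearing in the statement---satisfying $F < G$, $F \neq \emptyset$, and $\card G \geq 2$---is uniquely determined by a threshold $k$ with $1 \leq k \leq \ell - 2$, via $F = \{0, \dots, k-1\}$ and $G = \{k, \dots, \ell - 1\}$. So the right-hand condition amounts to the following: for every such $k$, there exist $x < k$ and distinct $y, z \in \{k, \dots, \ell-1\}$ with $p(x, y) \neq p(x, z)$.

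The key technical step is to translate reducibility in similar terms. Inspecting the three clauses defining $p_1 \uplus p_2$, I claim that a decomposition $p = p_1 \uplus p_2$ with $|p_1| = k+1$ and $|p_2| = \ell - k$ exists if and only if $p(x, y) = p(x, k)$ for every $x < k$ and every $y > k$. Indeed, taking $p_1$ to be the restriction of $p$ to $[k+1]^2$ and $p_2$ to be the obvious re-indexing of $p$ on $\{k, \dots, \ell-1\}$, the first two clauses of $\uplus$ hold automatically, while the third clause is precisely this color-constancy requirement. Thus $p$ is irreducible if and only if, for every $k$ with $1 \leq k \leq \ell-2$, there exist $x < k$ and $y > k$ with $p(x, y) \neq p(x, k)$.

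With these two reformulations in hand, the equivalence is essentially immediate. For the forward direction, given an irreducible $p$ and a threshold $k$, I take the witnesses $x, y$ produced by irreducibility and set $z = k$, which lies in $G$ and differs from $y$ since $y > k$. Conversely, a hypothetical decomposition of $p$ at threshold $k$ would force $p(x, y) = p(x, k) = p(x, z)$ for every $x \in F$ and every $y, z \in G$, contradicting the color-varying condition. I do not anticipate any real obstacle; the only bookkeeping subtlety is verifying that the third clause of $\uplus$ covers the case $y > k$ while $y = k$ is handled trivially, so that taking $z = k$ in the forward direction is legitimate.
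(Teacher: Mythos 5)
Your proposal is correct and follows essentially the same route as the paper: both arguments unfold the definition of $\uplus$ to show that a decomposition at a given cut exists exactly when $p(x,\cdot)$ is constant from $F$ into $G$, with the two halves being the restriction of $p$ to $F \cup \{\min G\}$ and to $G$ (your ``threshold $k$'' and choice $z = k = \min G$ is just an explicit bookkeeping of this). No gaps; the claim that the first two clauses of the join hold automatically and the third is the constancy condition is exactly the case analysis the paper carries out.
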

\begin{proof}
    Suppose first $p = p_0 \uplus p_1$, with $\ell_i = |p_i| \geq 2$. Partition $\ell_0 + \ell_1-1$ as $[0,\ell_0-1)$ and $[\ell_0-1, \ell_0+\ell_1-1)$. Note that $\card [0,\ell_0-1) \geq 1$ and $\card [\ell_0-1, \ell_0+\ell_1-1) \geq 2$. By definition of $p_0 \uplus p_1$, for all $x \in [0, \ell_0-1)$ and $y,z \in [\ell_0-1, \ell_0 +\ell_1-1), (p_0 \uplus p_1)(x,y) = p_0(x, \ell_0-1) = (p_0 \uplus p_1)(x,z)$.

    Suppose now there is a partition $F \sqcup G$ as in the statement of the lemma. Let $p_0 = p \uh_{F \cup \{\min G\}}$ and $p_1 = p \uh_G$. We claim that $p = p_0 \uplus q_1$. First, note that $|p| = |p_0| + |p_1|-1$. For all $x<y<|p|$, we have that
    \begin{itemize}
        \item if $x<y<|p_0|$, $p(x,y)=p_0(x,y)$;
        \item if $|p_0| - 1 \leq x<y<|p_0|+|p_1|-1$, $p(x,y)=p_1(x-|p_0|+1,y-|p_0|+1)$;
        \item if $x < |p_0|-1 \leq y$, $p(x,y)= p(x,|p_0|-1)=p_0(x,|p_0|-1)$. The first equality holds since $x \in F$, $|p_0|-1, y \in G$ and by assumption on the 2-partition $F \sqcup G = \ell$.
    \end{itemize}
    This proves $p = p_0 \uplus p_1$.
\end{proof}

The following lemma states, as expected, that the join operator is associative. Therefore, we will omit the explicit parenthesis when a pattern is obtained by multiple joins. 

\begin{lemma}
    The join operator $\uplus$ is associative.
\end{lemma}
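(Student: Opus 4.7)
The plan is to show that $P := (p \uplus q) \uplus r$ and $Q := p \uplus (q \uplus r)$ agree as functions on $[\ell]^2$, where $\ell = |p|+|q|+|r|-2$. First I would verify that both sides have the same length $\ell$, by iterating $|u \uplus v| = |u|+|v|-1$. Setting $a = |p|-1$ and $b = |p|+|q|-2$ picks out the two ``junction'' indices where consecutive summands meet in either parenthesization.

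Next I would partition the pairs $x < y$ in $\ell$ into six regions according to how $x$ and $y$ sit relative to $a$ and $b$: (1) $y \leq a$; (2) $x < a$ and $a < y \leq b$; (3) $a \leq x$ and $y \leq b$; (4) $x < a$ and $y > b$; (5) $a \leq x < b$ and $y > b$; (6) $x \geq b$. In each region I would unfold the three-clause definition of $\uplus$ on both $P$ and $Q$ and check that the outputs coincide. Regions (1), (3), and (6) correspond to pairs living entirely inside $p$, $q$, or $r$, and yield $p(x,y)$, $q(x-a, y-a)$, and $r(x-b, y-b)$ respectively on both sides. Regions (2) and (4) both produce $p(x,a)$: for $Q$ this is the third clause of the outer join (since $x < a < y$), while for $P$ in region (4) one first applies the third clause of the outer join to reduce to $(p \uplus q)(x, b)$, then applies the third clause of the inner join (using $x < a < b$) to reach $p(x,a)$. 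Region (5) yields $q(x-a, |q|-1)$ on both sides by a symmetric unfolding: on the $Q$ side the outer join sends us into $(q \uplus r)$, whose third clause evaluates to $q(x-a, |q|-1)$; on the $P$ side the outer join lands on $(p \uplus q)(x, b)$, whose second clause (since $x \geq a$) evaluates to $q(x-a, |q|-1)$.

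The main obstacle is simply the bookkeeping for the nested three-way case split; there is no genuine combinatorial content. The only subtle points are the degenerate cases where one of $|p|, |q|, |r|$ equals $1$, which collapses a junction (e.g.\ $a = b$ when $|q| = 1$) and voids some of the six regions. In each such case one checks directly from the definition that the length-one pattern acts as a two-sided identity for $\uplus$, so associativity reduces to a triviality. With the six regions laid out above, each comparison is a short direct computation.
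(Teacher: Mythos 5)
Your proposal is correct and follows essentially the same route as the paper: both verify that the two parenthesizations have the same length and then agree pointwise, via a six-way case split on the position of the pair $x<y$ relative to the two junction indices $|p|-1$ and $|p|+|q|-2$, unfolding the three clauses of the definition in each region. Your extra remark on the degenerate length-one summands (where a junction collapses) is a harmless addition the paper omits.
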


\begin{proof}
    Let $p_0,p_1,p_2$ be three $\RT_2^2$ patterns. 
    Let $q_0$ denote $(p_0 \uplus p_1) \uplus p_2$ and  $q_1$ denote $p_0\uplus (p_1 \uplus p_2)$.
    Let $x,y \leq |p_0|+|p_1|+|p_2|-2$
    \begin{itemize}
        \item If $x<y < |p_0|$, $q_0(x,y)= (p_0 \uplus p_1)(x,y)=p_0(x,y) = q_1(x,y)$;
        \item if $|p_0| \leq x <y < |p_0|+|p_1|-1$, $q_0(x,y)=(p_0 \uplus p_1)(x,y)= p_1(x-|p_0|+1,y-|p_0|+1)=q_1(x,y)$;
        \item if $|p_0|+|p_1|-1 \leq x<y < |p_0|+|p_1|+|p_2|-2$, $q_0(x,y)=p_2(x-|p_0|-|p_1|+2,y-|p_0|-|p_1|+2)= (p_1 \uplus p_2)(x -|p_1|+1,y-|p_1|+1) = q_1(x,y)$;
        \item if $ x < |p_0| \leq y < |p_0|+|p_1|-1$, then $q_0(x,y)=(p_0 \uplus p_1)(x,y) = p_0(x,|p_0|-1)=q_1(x,y)$; 
        \item if $ x < |p_0|$ and $|p_0| + |p_1|-1 \leq y$, then $q_0(x,y)=(p_0 \uplus p_1)(x,|p_0|+|p_1|-2)=p_0(x,|p_0|-1)=q_1(x,y)$;
        \item if $ |p_0| \leq x < |p_0|+|p_1|-1 \leq y$, then $q_0(x,y)=(p_0 \uplus p_1)(x, |p_0|+|p_1|-2) = p_1(x-|p_0|+1, |p_1|-1) = (p_1 \uplus p_2)(x-|p_0|+1, y-|p_0|+1) = q_1(x,y)$.
    \end{itemize}
\end{proof}

Recall that, given a pattern~$p$ of size at least~$2$, $p^-$ is its restriction to the domain~$[|p|-1]^2$. We shall see in the later sections that there exists a close relation between computing an infinite set avoiding a pattern~$p$, and computing an infinite set such that every set realizing~$p^-$ has the wrong limit. It will therefore be often useful to think of a pattern~$p$ as the pattern~$p^-$ together with a specification of a limit of the elements, given by $p(\cdot, |p|-1)$. 

\begin{definition}
A pattern~$p : [\ell]^2 \to 2$ is \emph{convergent} if there is some~$i < 2$ such that for every~$x < \ell-1$, $p(x, \ell-1) = i$. Otherwise, $p$ is \emph{divergent}.
\end{definition}

\begin{example}
The following pattern~$p$ is divergent as $p(a, c) = 1 \neq 0 = p(b, c)$. We claim that~$p$ is irreducible. Indeed, the unique 2-partition $F \sqcup G = \{a, b, c\}$ satisfying $F \neq \emptyset$, $\card G \geq 2$, and $F < G$ is $F = \{a\}$ and $G = \{ b, c \}$. But then $p(a, b) \neq p(a, c)$.
\vspace{0.5cm}
    $$
    \psmatrix[colsep=1.5cm,rowsep=1.5cm,mnode=circle]
a & b & c
\everypsbox{\scriptstyle} 
\ncarc[arcangle=30]{1,1}{1,2}_{0} 
\ncarc[arcangle=30]{1,2}{1,3}_{0}
\ncarc[arcangle=50]{1,1}{1,3}_{1} 
\endpsmatrix
$$
Actually, this pattern and its dual are the only two patterns of length~3 which are divergent and irreducible.
\end{example}


The following lemma shows that divergence is preserved among the join operator.

\begin{lemma}
    Let $p$ and $q$ be two patterns such that at least one of them is divergent. Then $p \uplus q$ is also divergent.
\end{lemma}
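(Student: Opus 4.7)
The plan is to unpack the piecewise definition of $p \uplus q$ at its rightmost column and show that the divergence witnesses of $p$ (resp.\ $q$) transfer directly to divergence witnesses of $p \uplus q$. Write $r = p \uplus q$; this pattern has length $\ell = |p|+|q|-1$, and its ``last column'' index is $\ell - 1 = |p|+|q|-2$. The goal is, in each case, to exhibit two indices $x \neq y$ below $\ell-1$ with $r(x, \ell-1) \neq r(y, \ell-1)$.

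Note first that divergence forces the pattern to have size at least $3$, and that if $|q|=1$ then $p \uplus q = p$ (and symmetrically for $|p|=1$), so the statement is trivial in those degenerate cases. So I may assume $|p|, |q| \geq 2$, which in particular ensures $\ell - 1 = |p|+|q|-2 > |p|-1$.

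Suppose $p$ is divergent, and pick $x < y < |p|-1$ with $p(x, |p|-1) \neq p(y, |p|-1)$. Since $x, y < |p|-1 < \ell -1$, the third clause of the definition of the join applies and gives
\[
r(x, \ell-1) = p(x, |p|-1) \neq p(y, |p|-1) = r(y, \ell-1),
\]
so $r$ is divergent. Suppose instead that $q$ is divergent, and pick $x' < y' < |q|-1$ with $q(x', |q|-1) \neq q(y', |q|-1)$. Set $x = x' + |p|-1$ and $y = y' + |p|-1$; then $|p|-1 \leq x < y < \ell-1$, so the second clause of the definition of the join applies and gives
\[
r(x, \ell-1) = q(x', |q|-1) \neq q(y', |q|-1) = r(y, \ell-1),
\]
again witnessing divergence of $r$.

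There is no real obstacle here; the only thing to be careful about is the index shift in the second case and the verification that one is in the correct branch of the piecewise definition (which is why one first discards the degenerate $|q|=1$ case to guarantee $\ell-1 > |p|-1$).
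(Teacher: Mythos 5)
Your proof is correct and follows essentially the same route as the paper: transfer the divergence witnesses of $p$ (via the third clause of the join's definition) or of $q$ (via the second clause, with the shift by $|p|-1$) to the last column of $p \uplus q$. Your explicit treatment of the degenerate case $|p|=1$ or $|q|=1$ and the check that $\ell-1 > |p|-1$ are careful additions, but the argument is the same.
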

\begin{proof}
 Suppose $p$ is divergent, i.e., there exists $x,y < |p|$ such that $p(x,|p|-1) \neq p(y,|p|-1)$. This yields by definition of the join that $(p \uplus q)(x,|p|+|q|-2) \neq (p \uplus q)(y,|p|+|q|-2)$

 Now suppose $q$ is divergent, i.e., there exists $x,y < |q|$ such that $q(x,|q|-1) \neq q(y,|q|-1)$. This yields by definition of the join that $(p \uplus q)(x+|p|-1,|p|+|q|-2) \neq (p \uplus q)(y+|p|-1,|p|+|q|-2)$
\end{proof}

\section{$\RT^2_2$ and countable hyperimmunities}\label[section]{sect:omega-hyp}

Recall that a problem $\Psf$ preserves $\omega$ hyperimmunities if for every set~$Z$, every countable collection of $Z$-hyperimmune functions $f_0, f_1, \dots$ and every $Z$-computable $\Psf$-instance~$X$, there is a $\Psf$-solution~$Y$ to~$X$ such that every $f_i$ is $Y \oplus Z$-hyperimmune. The notion of preservation of $\omega$ hyperimmunities was used to separate the Erd\H{o}s-Moser theorem from Ramsey's theorem for pairs in reverse mathematics~\cite{lerman2013separating,patey2017iterative}. The goal of this section is to prove the following characterization theorem:

\begin{repmaintheorem}{omega-hyp-divergent-irreducible}
Let $p$ be a pattern. $\RT^2_2(p)$ preserves $\omega$ hyperimmunities if and only if $p$ contains a sub-pattern which is simultaneously divergent and irreducible.
\end{repmaintheorem}

Note that if every computable instance of a problem~$\Psf$ admits a solution of computably dominated degree, and the proof relativizes, then $\Psf$ preserves $\omega$ (and in fact even uncountably many) hyperimmunities. However, by \Cref{mainthm:dnczp}, there exists a computable coloring such that every infinite set avoiding any pattern computes a $\emptyset'$-DNC function, and by Miller (see Khan and Miller~\cite{khan_forcing_2017}, every $\emptyset'$-DNC function is of hyperimmune degree. One therefore cannot prove \Cref{omega-hyp-divergent-irreducible} by building computably dominated solutions.

The proof of \Cref{omega-hyp-divergent-irreducible} is divided into \Cref{thm:divergent-extensible-preservation-hyperimmunities} and \Cref{thm:sub-pattern-convergent-reducible-strongly-appears}.

\begin{theorem}\label[theorem]{thm:divergent-extensible-preservation-hyperimmunities}
Let $p$ be a divergent, irreducible pattern. Then $\RT^2_2(p)$ preserves $\omega$ hyperimmunities.
\end{theorem}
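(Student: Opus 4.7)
I would use Mathias-style forcing relative to $Z$ to construct the desired infinite $f$-avoiding-$p$ set $H$. A condition is a pair $(F, X)$ where $F \finsub \NN$ is $f$-avoiding $p$, $X$ is an infinite $Z$-computable reservoir with $\min X > \max F$, and, for each $y \in F$, $f(y, \cdot)$ is constant on $X$ with value $c_F(y)$. The key invariant to maintain along the generic sequence is $(\star)$: no embedding $\iota : [\ell-1] \hookrightarrow F$ realizes $p^-$ with $c_F(\iota(i)) = p(i, \ell-1)$ for every $i < \ell - 1$, where $\ell = |p|$. Under $(\star)$, appending any $x \in X$ to $F$ preserves $f$-avoidance of $p$, since a new copy of $p$ would have to place $x$ at position $\ell-1$ and thereby exhibit a forbidden embedding. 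Letting $H = \bigcup F_s$ along the generic filter produces an infinite $f$-avoiding-$p$ set.

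\textbf{Step 1 (Density of extension).} Given $(F, X)$ satisfying $(\star)$, I pick $x \in X$ and refine $X$ to $X' \subseteq X \setminus [0,x]$, infinite and $Z$-computable, on which $f(x, \cdot)$ is constant, setting $c_{F \cup \{x\}}(x)$ to this color. One checks that $(\star)$ is preserved: any new forbidden embedding must use $x$ at position $\ell-2$ (since $x > \max F$ and embeddings preserve order), and hence reduces to a forbidden embedding of $p \uh [\ell-2]^2$ into $F$ conditional on $c_{F \cup \{x\}}(x) = p(\ell-2, \ell-1)$. The combinatorial core is showing that a suitable $x$ and $X'$ always exist; this is where irreducibility of $p$ enters via \Cref{lem:irreducible-is-extensible}. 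If no such extension existed along any thinning of $X$, then the structure of $F$, $c_F$, and the residual color behaviour of $X$ would force a 2-partition of $\ell$ on which the top-row values of $p$ are constant over the right part, yielding a factorization $p = p' \uplus p''$ with $|p'|, |p''| \geq 2$, contradicting irreducibility.

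\textbf{Step 2 (Preservation of hyperimmunity).} For each $k \in \NN$ and each Turing functional $\Phi_e$, I show density of conditions forcing \qt{either $\Phi_e^{H \oplus Z}$ is not total, or $\Phi_e^{H \oplus Z}$ does not dominate $f_k$}. Assuming $(F, X)$ has no such extension, every sufficiently generic extension makes $\Phi_e^{H \oplus Z}(n)$ converge to a value exceeding $f_k(n)$. Divergence of $p$ enters here by allowing two genuinely different extension directions (corresponding to the two distinct top-row values of $p$), so that the tree of legal finite extensions of $(F, X)$ bounded by an appropriate parameter $n$ is $Z$-computable and finitely branching. Taking $g(n)$ to be the maximum of $\Phi_e^{F' \oplus Z}(n)$ over the legal extensions $F'$ of depth~$\leq N(n)$ for a suitable $Z$-computable bound $N$, one gets a $Z$-computable $g$ dominating $f_k$ on cofinitely many~$n$, contradicting $Z$-hyperimmunity of $f_k$. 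Running this density requirement for every~$k$ and every~$e$ along the construction keeps every $f_k$ hyperimmune relative to $H \oplus Z$.

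\textbf{Main obstacle.} The most delicate step is the density lemma in Step 1: I expect to have to isolate a separate combinatorial lemma stating that, for any condition $(F, c_F, X)$ satisfying $(\star)$, either a $(\star)$-preserving extension is available in $X$, or the forbidden configurations embedded in $F$ and $c_F$ witness a proper join-factorization of $p$, thereby contradicting irreducibility via \Cref{lem:irreducible-is-extensible}. The supporting obstacle is in Step 2: the finitely-branching structure of the extension tree, and the compatibility of its $Z$-computable enumeration with the maintenance of $(\star)$, must be set up carefully so that $g$ is genuinely $Z$-computable. Once these two combinatorial points are in place, the remaining bookkeeping (diagonalizing against all $(k, e)$ pairs by a standard priority ordering along the generic filter) is routine.
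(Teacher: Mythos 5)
Your forcing skeleton is the same as the paper's: Mathias conditions with stabilized stems, and your invariant $(\star)$ is exactly the paper's notion of $(f,g)$-avoidance, with irreducibility (via \Cref{lem:irreducible-is-extensible}) guaranteeing that stems can be extended by reservoir elements; your Step 1 is essentially \Cref{lem:combinatorial-fg-avoidance} and \Cref{lem:combinatorial-extensibility} (and note that no search or contradiction argument is needed there: \emph{every} $x \in X$ works once the reservoir is thinned to stabilize it). The genuine gap is in Step 2. Whether a finite block $\rho \subseteq X$ is a \qt{legal extension} (preserves $(\star)$) depends on the limit colors $c(y)$ of the new elements $y \in \rho$, i.e.\ on $\Delta^0_2(Z)$ information, not on anything $Z$-computable; so the \qt{tree of legal finite extensions} you want to search is not $Z$-computable, your function $g(n) = \max \Phi_e^{F' \oplus Z}(n)$ over legal $F'$ is not $Z$-computable either, and no contradiction with $Z$-hyperimmunity of $f_k$ follows. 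Your appeal to divergence (\qt{two extension directions} making the tree computable and finitely branching) does not address this, and is not the role divergence actually plays.

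The paper resolves this with three ingredients your proposal lacks. First, the forcing question $\qvdash$ quantifies over \emph{all} candidate limit colorings $\hat g$ (\qt{for every $\hat g$ there is a block $\rho$ $(f,\hat g)$-avoiding $p$ with $\varphi(\sigma \cup \rho)$}), which by compactness is $\Sigma^0_1(X)$-definable and $\Sigma^0_1$-compact (\Cref{lem:question-sigma01}, \Cref{lem:question-compact}); on a positive answer, irreducibility merges the found block into the stem via \Cref{lem:combinatorial-extensibility}. Second, on a negative answer one must pass to a reservoir homogeneous for a \qt{bad} coloring $\hat g$, which is only guaranteed to exist in a nonempty $\Pi^0_1(X)$ class and need not be $Z$-computable --- so your requirement that reservoirs be $Z$-computable makes the negative case unachievable; the paper instead allows arbitrary reservoirs but requires them to be computably dominated (\Cref{def:forcing-notion}) and picks $\hat g$ by the computably dominated basis theorem. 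It is here, on the homogeneous reservoir, that divergence is actually used: homogeneity plus divergence upgrades $f$-avoidance to $(f,\hat g)$-avoidance, so the resulting condition forces the negation (\Cref{lem:question-extension}). Third, the computably-dominated constraint on reservoirs is what makes the $X$-computable bound $x \mapsto \ell_x$ of \Cref{lem:preservation-hyperimmunity} dominated by a computable function, hence unable to dominate the hyperimmune $f_k$; without it (or with your $Z$-computable reservoirs, which cannot survive the negative case) the domination argument in Step 2 does not go through.
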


Note that \Cref{thm:divergent-extensible-preservation-hyperimmunities} is not strong enough, in that there are patterns which are convergent or reducible patterns~$p$ for which $\RT^2_2(p)$ preserves $\omega$ hyperimmunities.

\begin{remark}
The following pattern~$p$ is reducible, but $\RT^2_2(p)$ is equivalent to~$\EM$, and therefore preserves $\omega$ hyperimmunities.
\vspace{1.5cm}
$$
\psmatrix[colsep=1.5cm,rowsep=1.5cm,mnode=circle]
a & b & c & d
\everypsbox{\scriptstyle} 
\ncarc[arcangle=30]{1,1}{1,2}_{0} 
\ncarc[arcangle=30]{1,2}{1,3}_{1}
\ncarc[arcangle=30]{1,3}{1,4}_{1}
\ncarc[arcangle=50]{1,1}{1,3}_{0} 
\ncarc[arcangle=50]{1,2}{1,4}_{0} 
\ncarc[arcangle=60]{1,1}{1,4}_{0} 
\endpsmatrix
$$
Indeed, it is  the join of the following two patterns $q$ and $r$.
\vspace{0.7cm}
$$
\psmatrix[colsep=1.5cm,rowsep=1.5cm,mnode=circle]
c & d
\everypsbox{\scriptstyle} 
\ncarc[arcangle=30]{1,1}{1,2}_{0}
\endpsmatrix
\hspace{20pt}
\psmatrix[colsep=1.5cm,rowsep=1.5cm,mnode=circle]
a & b & c
\everypsbox{\scriptstyle} 
\ncarc[arcangle=30]{1,1}{1,2}_{1} 
\ncarc[arcangle=30]{1,2}{1,3}_{1}
\ncarc[arcangle=50]{1,1}{1,3}_{0} 
\endpsmatrix
$$ 
Thus, $q$ being a sub-pattern or $r$, by \Cref{lem:avoiding-join}, $\RT^2_2(p)$ implies $\RT^2_2(r)$, which is equivalent to~$\EM$.
On the other hand, $r$ being a sub-pattern of~$p$, $\RT^2_2(r)$ implies $\RT^2_2(p)$ by \Cref{lem:sub-pattern-implication}. This example should not be considered as a flaw in the definition of irreducibility. Indeed, this simply says that avoiding~$p$ is a too weak invariant for the proof of preservation of $\omega$ hyperimmunities.
\end{remark}

The proof of \Cref{thm:divergent-extensible-preservation-hyperimmunities} is done using a variant of Mathias forcing. It requires several technical definitions and lemmas, that we now detail.
Even with non-stable colorings, every finite set of elements can be \qt{stabilized} by restricting the integers to an appropriate reservoir.

\begin{definition}
Let $E$ and $F$ be two non-empty sets such that $E < F$.
Let $f : [\NN]^2 \to 2$ be a coloring and $g : \NN \to 2$ be a partial coloring with $\dom g \supseteq E$.
We say $F$ \emph{$f$-stabilizes $E$ with witness $g$} if for all $x \in E$ and $y \in F$, $f(x,y)=g(x)$.
\end{definition}

Working with Mathias conditions for which the initial segments avoids the pattern~$p$ is not a sufficiently strong invariant to preserve hyperimmunities. We shall therefore define a stronger notion of avoidance based on the decomposition of~$p$ into $p^-$ and the function $x \mapsto p(x, |p|-1)$.


\begin{definition}
Let $p : [\ell]^2 \to 2$ be a pattern with $\ell \geq 2$ and $f : [\NN]^2 \to 2$ and $g : \NN \to 2$ be two colorings.
A set~$X$ \emph{$(f,g)$-avoids~$p$} if it $f$-avoids $p$, and for every subset~$F = \{ x_0 < \dots < x_{\ell-2}\} \subseteq X$ 
which $f$-realizes $p^{-}$, there is some~$i < \ell-1$ such that $g(x_i) \neq p(i, \ell-1)$.
\end{definition}

The following lemma relates the notion of $(f, g)$-avoidance to the notion of $f$-avoidance.
Note that if $p$ is a divergent pattern, then for every pair of colorings $f : [\NN]^2 \to 2$ and $g : \NN \to 2$,
every $g$-homogeneous set $f$-avoiding $p$ also $(f, g)$-avoids~$p$.

\begin{lemma}\label[lemma]{lem:combinatorial-fg-avoidance}
Let $p$ be a pattern. Fix two colorings $f : [\NN]^2 \to 2$ and $g : \NN \to 2$.
Let $E < F$ be two sets such that $F$ $f$-stabilizes $E$ with witness~$g$.
Then $E$ $(f, g)$-avoids~$p$ iff for every~$y \in F$, $E \cup \{y\}$ $f$-avoids~$p$.

\end{lemma}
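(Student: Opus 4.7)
The plan is to prove both directions directly by unfolding the definitions, relying on two simple observations: because $E < F$, any subset of $E \cup \{y\}$ that $f$-realizes a pattern of length~$\ell$ and contains~$y$ must have $y$ as its top element; and the stabilization hypothesis $f(x,y) = g(x)$ for every $x \in E$, $y \in F$ converts a pair-constraint of the form $f(x_i, y) = p(i, \ell-1)$ into the unary constraint $g(x_i) = p(i, \ell-1)$, which is exactly the data that appears in the definition of $(f,g)$-avoidance.

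For the forward direction, I would fix an arbitrary $y \in F$ and show that no subset of $E \cup \{y\}$ $f$-realizes~$p$. A subset not containing~$y$ lies in~$E$, which already $f$-avoids~$p$ by hypothesis, so it cannot realize~$p$. A subset containing~$y$ has the form $\{x_0 < \dots < x_{\ell-2} < y\}$ with $\{x_0, \dots, x_{\ell-2}\} \subseteq E$, since $E < F$. If this set $f$-realized~$p$, then $\{x_0, \dots, x_{\ell-2}\}$ would $f$-realize $p^-$ and, for every $i < \ell-1$, stabilization would give $g(x_i) = f(x_i, y) = p(i, \ell-1)$, contradicting the $(f,g)$-avoidance assumption.

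For the backward direction, I would first observe that $E \subseteq E \cup \{y\}$ for any $y \in F$ (which is non-empty by the definition of stabilization), so $E$ itself $f$-avoids~$p$. Now let $\{x_0 < \dots < x_{\ell-2}\} \subseteq E$ $f$-realize~$p^-$, and pick any $y \in F$. The set $\{x_0, \dots, x_{\ell-2}, y\}$ lies in $E \cup \{y\}$ and therefore does not $f$-realize~$p$; since its pair-colors on $\{x_0, \dots, x_{\ell-2}\}$ already match $p^-$, the failure must come from some pair involving~$y$, so there exists $i < \ell-1$ with $f(x_i, y) \neq p(i, \ell-1)$. Stabilization again turns this into $g(x_i) \neq p(i, \ell-1)$, which is the required witness for $(f,g)$-avoidance of~$p$ by~$E$.

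There is no real obstacle here; the lemma is essentially a bookkeeping equivalence. Its value, which motivates stating it explicitly, is that it packages the stabilization so that later forcing arguments can reduce reasoning about pair-avoidance in extensions of an initial segment by elements of a reservoir~$F$ to a purely unary condition on~$E$ mediated by~$g$, exactly the style of invariant that Mathias-style constructions can preserve.
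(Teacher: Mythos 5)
Your proof is correct and follows essentially the same route as the paper's: unfold the definitions, use $E < F$ to see that a realizing set containing an element $y \in F$ must have $y$ on top, and use stabilization to trade $f(x_i,y)$ for $g(x_i)$. The only (harmless) differences are presentational — you argue the forward direction by cases rather than by contradiction, and you make explicit the observations that $F \neq \emptyset$ and that $E$ itself $f$-avoids $p$, which the paper leaves implicit.
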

\begin{proof}
Let $\ell = |p|$.
Suppose first that $E$ $(f, g)$-avoids~$p$, and fix $y \in F$. Suppose for the contradiction that $E \cup \{y\}$ does not $f$-avoid~$p$.
Let $H = \{ x_0, \dots x_{\ell-1} \}\subseteq E \cup \{y\}$ $f$-realize $p$. 
Since~$E$ $(f, g)$-avoids~$p$, then $E$ $f$-avoids~$p$, so $x_{\ell-1} = y$.
In particular, $H \cap E$ $f$-realizes~$p^-$, so since $E$ $(f, g)$-avoids~$p$, there is some~$i < \ell-1$ such that $g(x_i) \neq p(i,\ell-1)$. Since $F$ $f$-stabilizes~$E$ with witness~$g$, then $g(x_i) = f(x_i, x_{\ell-1})$, so $H$ does not $f$-realize~$p$.

Suppose now that $E \cup \{y\}$ $f$-avoids~$p$ for every~$y \in F$, and let $H = \{ x_0, \dots, x_{\ell -2} \} \subseteq E$ $f$-realize $p^-$. Consider $y \in F$. Since $H \cup \{y\}$ $f$-avoids $p$, there exists $i \leq \ell-2$ such that $f(x_i,y) \neq p(i,\ell-1)$, i.e., since $F$ $f$-stabilizes $E$ with witness $g$, $g(x_i)=f(x_i,y) \neq p(i,\ell-1)$. 
\end{proof}

The following lemma gives a sufficient condition to preserve $(f, g)$-avoidance by considering the union of two sets.
It is precisely where we need the irreducibility assumption in \Cref{thm:divergent-extensible-preservation-hyperimmunities}.

\begin{lemma}\label[lemma]{lem:combinatorial-extensibility}
Let $p$ be an irreducible pattern. Fix two colorings $f : [\NN]^2 \to 2$ and $g : \NN \to 2$.
Let $E < F$ be two sets such that $F$ $f$-stabilizes $E$ with witness~$g$, and $E$ and $F$ both $(f, g)$-avoid~$p$. Then $E \cup F$ $(f, g)$-avoids~$p$.
\end{lemma}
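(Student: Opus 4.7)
The plan is to argue by contradiction, treating the two clauses of $(f,g)$-avoidance separately. For clause (a), I assume some $H = \{x_0 < \dots < x_{\ell-1}\} \subseteq E \cup F$ $f$-realizes $p$; for clause (b), I assume some $H = \{x_0 < \dots < x_{\ell-2}\} \subseteq E \cup F$ $f$-realizes $p^-$ with $g(x_i) = p(i, \ell-1)$ for all $i < \ell-1$. In both cases, since $E$ and $F$ individually $(f,g)$-avoid $p$, the hypothetical $H$ cannot lie entirely in $E$ nor entirely in $F$. Using $E < F$, this forces a split index $k$ with $\{x_0, \dots, x_{k-1}\} \subseteq E$ and the remaining elements of $H$ in $F$.

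The key step is to feed the partition $F' \sqcup G' = \ell$ with $F' = \{0, \dots, k-1\}$ and $G' = \{k, \dots, \ell-1\}$ into the reformulation of irreducibility from \Cref{lem:irreducible-is-extensible}, obtaining some $i \in F'$ and $j < j' \in G'$ with $p(i, j) \neq p(i, j')$. Since $x_i \in E$, whenever the second coordinate of $p(i, \cdot)$ comes from an element of $F$ in $H$, the $f$-stabilization hypothesis collapses its value to $g(x_i)$; and whenever it corresponds to the top index $\ell - 1$, the hypothesis attached to $H$ again pins $p(i, \ell - 1)$ to $g(x_i)$. This rigidity collapses $p(i, j)$ and $p(i, j')$ to a single value, contradicting irreducibility.

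Concretely, for clause (a) I first need to check that the partition is admissible (i.e.\ $\card G' \geq 2$): if $k \leq \ell - 2$, both $x_j$ and $x_{j'}$ lie in $F$ and stabilization yields $p(i, j) = g(x_i) = p(i, j')$, a contradiction, forcing $k = \ell - 1$; but then $\{x_0, \dots, x_{\ell - 2}\}$ is an $E$-subset $f$-realizing $p^-$ whose top edges $f(x_i, x_{\ell - 1}) = g(x_i)$ all match $p(i, \ell - 1)$, contradicting $(f, g)$-avoidance of $E$. For clause (b), the split index satisfies $1 \leq k \leq \ell - 2$ automatically, so the partition is admissible; I then case-split on whether $j' \leq \ell - 2$ (both $x_j, x_{j'} \in F$, stabilization forces $p(i, j) = g(x_i) = p(i, j')$) or $j' = \ell - 1$ (stabilization gives $p(i, j) = g(x_i)$, the hypothesis on $H$ gives $p(i, \ell - 1) = g(x_i)$, so again equal). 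The main obstacle is essentially notational: one must recognize that in clause (b), the partition invoked by irreducibility is a partition of $\ell$, while $H$ only contains $\ell - 1$ elements, so the index $\ell - 1$ is never realized by $H$ but still enters the argument through the $p(i, \ell - 1)$ values constrained by the hypothesis. Once this is set up, the case analysis is entirely mechanical.
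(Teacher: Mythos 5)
Your proof is correct and follows essentially the same route as the paper: both arguments split the indices of a hypothetical realization into the $E$-part and the $F$-part (adjoining the index $\ell-1$ in the $p^-$ case), feed that partition into \Cref{lem:irreducible-is-extensible}, and use $f$-stabilization with witness $g$ to collapse the two distinguished values of $p(i,\cdot)$. The only differences are presentational: you argue by contradiction rather than directly, and in clause (a) you handle the case of a single element in $F$ by inlining the relevant direction of \Cref{lem:combinatorial-fg-avoidance} instead of citing it.
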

\begin{proof}
Let $\ell = |p|$. 
We first show that $E \cup F$ $f$-avoids~$p$.
Let $H = \{ x_0, \dots x_{\ell-1} \}\subseteq E \cup F$ $f$-realize $p$. 
Since~$E$ $(f, g)$-avoids~$p$, then by \Cref{lem:combinatorial-fg-avoidance}, $H \cap (E \cup \{x\})$ $f$-avoids~$p$ for every~$x \in F$, so $\card H \cap F \geq 2$.
Since~$F$ $(f, g)$-avoids~$p$, then $F$ $f$-avoids~$p$, so $H \cap E \neq \emptyset$.
Let $A = \{ i : x_i \in H \cap E \}$ and $B = \{ i : x_i \in H \cap F \}$. In particular, $A \neq \emptyset$ and $\card B \geq 2$. Since~$p$ is irreducible, then by \Cref{lem:irreducible-is-extensible}, for the 2-partition $A \sqcup B = \ell$, there is some $i \in A$ and $j, k \in B$ such that $p(i, j) \neq p(i, k)$. Since~$F$ $f$-stabilizes~$E$, then $f(x_i, x_j) = f(x_i, x_k)$, so $H$ does not $f$-realize~$p$.

We now show that for every subset~$H = \{  x_0, \dots x_{\ell-2} \} \subseteq E \cup F$ which $f$-realizes~$p^-$, there is some~$i < \ell-1$ such that $g(x_i) \neq p(i, \ell-1)$. Fix any such~$H$.
Since~$E$ and $F$ both $(f, g)$-avoid~$p$, $H \cap E$ and $H \cap F$ are both non-empty.
Let $A = \{ i : x_i \in H \cap E \}$ and $B = \ell \setminus A$.
In particular, $A \neq \emptyset$ and $\card B \geq 2$, so since~$p$ is irreducible, then by \Cref{lem:irreducible-is-extensible}, there is some~$i \in A$
and $j < k \in B$ such that $p(i, j) \neq p(i, k)$. 

Suppose $k < \ell-1$. Since $H$ $f$-realizes~$p^-$, then $f(x_i, x_j) = p(i, j)$ and $f(x_i, x_k) = p(i, k)$.
Since $F$ $f$-stabilizes~$E$ and $x_j, x_k \in F$, $f(x_i, x_j) = f(x_i, x_k)$, so $p(i, j) = p(i, k)$, contradiction.

Suppose now $k = \ell-1$. Since $F$ $f$-stabilizes~$E$ with witness~$g$, then $f(x_i, x_j) = g(x_i)$.
Since $H$ $f$-realizes~$p^-$, then $f(x_i, x_j) = p(i, j)$, so $g(x_i) = p(i, j) \neq p(i, \ell-1)$.
This completes the proof of the lemma.
\end{proof}

We are now ready to define the notion of forcing used in \Cref{thm:divergent-extensible-preservation-hyperimmunities} and study its combinatorial properties. We shall prove \Cref{thm:divergent-extensible-preservation-hyperimmunities} in a unrelativized form, so in what follows, fix a computable 2-coloring of pairs $f :[ \NN]^2 \to 2$.

\begin{definition}\label[definition]{def:forcing-notion}
   A \emph{condition} is a Mathias pair $(\sigma,X)$ such that :
\begin{itemize}
    \item $X$ $f$-stabilizes~$\sigma$ with some witness $g : \sigma \to 2$;
    \item $\sigma$ $(f, g)$-avoids~$p$;
    \item $X$ is computably dominated.
\end{itemize} 
\end{definition}

The partial order on conditions is induced by Mathias extension, that is, $(\tau,Y) \leq (\sigma,X)$ if $\sigma \preceq \tau$, $Y \subseteq X$ and $\tau \setminus \sigma \subseteq X$.
Every filter $\F$ induces a set $G_\F := \bigcup_{(\sigma,X) \in \F} \sigma$. The following lemma implies that for every sufficiently generic filter~$\F$, the set $G_\F$ is infinite.

\begin{lemma}\label[lemma]{lem:omega-hyp-generic-infinite}
    Let $c = (\sigma, X)$ be a condition and $x \in X$. There exists $Y \subseteq X$ such that $(\sigma \cup \{x\}, Y)$ is an extension of~$c$. 
\end{lemma}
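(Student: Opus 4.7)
The plan is to thin $X$ to an infinite subset $Y$ on which $f(x,\cdot)$ is constant, and to extend the witness $g$ to $\sigma \cup \{x\}$ accordingly; preservation of $(f,g)$-avoidance then follows from Lemma~\ref{lem:combinatorial-extensibility}.

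First, I would partition the elements of $X$ above $x$ by the value of $f(x,\cdot)$: for each $i < 2$, set $A_i = \{z \in X : z > x \text{ and } f(x,z) = i\}$. Since $X$ is infinite, pigeonhole yields some $i_0 < 2$ with $A_{i_0}$ infinite. I would set $Y = A_{i_0}$ and extend $g$ to $g' : \sigma \cup \{x\} \to 2$ by declaring $g'(x) = i_0$. Because $f$ is computable, both $A_0$ and $A_1$ are $X$-computable, hence $Y \leq_T X$; since computable domination is downward closed under Turing reducibility and $X$ is computably dominated, so is $Y$. By construction $Y$ $f$-stabilizes $\sigma \cup \{x\}$ with witness $g'$: for $y \in \sigma$ and $z \in Y \subseteq X$ we have $f(y,z) = g(y) = g'(y)$, while $f(x,z) = i_0 = g'(x)$ for every $z \in Y$.

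The main verification is that $\sigma \cup \{x\}$ $(f,g')$-avoids~$p$, and for this I would invoke Lemma~\ref{lem:combinatorial-extensibility} with $E = \sigma$, $F = \{x\}$, and witness $g'$. The stabilization hypothesis holds because $x \in X$ and $X$ $f$-stabilizes $\sigma$ with witness $g$; the set $\sigma$ $(f,g)$-avoids~$p$ by the assumption on the condition $c$; and since $p$ is divergent we have $|p| \geq 3$, so $p^-$ has length at least $2$ and no subset of the singleton $\{x\}$ can $f$-realize $p^-$, which makes the $(f,g')$-avoidance of $p$ by $\{x\}$ vacuous. Using the irreducibility of $p$ from the hypothesis of Theorem~\ref{thm:divergent-extensible-preservation-hyperimmunities}, Lemma~\ref{lem:combinatorial-extensibility} then yields $\sigma \cup \{x\} = E \cup F$ $(f,g')$-avoids~$p$. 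Combined with the stabilization and domination checks above, this shows $(\sigma \cup \{x\}, Y)$ satisfies all three clauses of Definition~\ref{def:forcing-notion}, and since $\{x\} \cup Y \subseteq X$ it is a Mathias extension of~$c$. There is no real obstacle once Lemma~\ref{lem:combinatorial-extensibility} is in hand: irreducibility of $p$ is what makes the final application of that lemma legal, while divergence enters only to guarantee $|p| \geq 3$ so that the singleton $\{x\}$ trivially $(f,g')$-avoids~$p$.
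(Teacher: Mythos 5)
Your proposal is correct and follows essentially the same route as the paper: thin $X$ above $x$ to a subset on which $f(x,\cdot)$ is constant, extend the witness $g$ by that color, note that divergence forces $|p|\geq 3$ so the singleton $\{x\}$ vacuously $(f,g')$-avoids $p$, and conclude by Lemma~\ref{lem:combinatorial-extensibility}. If anything, your explicit pigeonhole step is slightly more careful than the paper's phrasing, which takes $Y = X \cap (x,\infty)$ and implicitly assumes $f(x,\cdot)$ is constant on it.
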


\begin{proof}
    Let $Y =  X \cap (x, \infty)$ be an infinite $X$-computable subset of $X$ on which $f(x, \cdot)$ is constant. In particular, $Y$ is computably dominated since $Y \leq_T X$. We now prove that $\sigma \cup \{ x \}$ also $(f,g')$-avoids $p$, with $g'$ extending $g$ properly by letting $g(x)$ be the limit color of $f(x, \cdot)$ on~$Y$. Note that $\{ x \}$ $f$-stabilizes $\sigma$ with witness $g'$ and that since $p$ is divergent, $|p|>2$ and thus $\{x\}$ $(f,g')$-avoids~$p$. By \Cref{lem:combinatorial-extensibility}, $\sigma \cup \{ x \}$ does $(f,g')$-avoid $p$.
\end{proof}

Let $c = (\sigma,X)$ be a condition, and let $\varphi$ be a $\Sigma_1^0$ or $\Pi_1^0$ formula. We say $c$ \emph{forces} $\varphi$ denoted $c \Vdash \varphi(G)$ if for every sufficiently generic filter $\F$ containing $c$, $\varphi(G_\F)$ holds.

We shall use the forcing question framework to preserve hyperimmunities (see Patey~\cite[Chapter 3]{pateylowness}). 

\begin{definition}
Given a notion of forcing $\PP$, a \emph{forcing question} for a class for formulas $\Gamma$ with a formal set variable~$G$, is a relation $\qvdash \subseteq \PP \times \Gamma$ such that for every~$c \in \PP$ and $\varphi(G) \in \Gamma$,
\begin{itemize}
    \item[(1)] If $c \qvdash \varphi(G)$, then there is an extension $d \leq c$ forcing $\varphi(G)$;
    \item[(2)] If $c \nqvdash \varphi(G)$, then there is an extension $d \leq c$ forcing $\neg \varphi(G)$.
\end{itemize}
\end{definition}

The computability-theoretic properties of the generic set are closely related to the existence of a forcing question with good definability and combinatorial features. We shall be particularly interested in the notions of $\Sigma^0_1$-preserving and $\Sigma^0_1$-compact forcing questions.

\begin{definition}
A forcing question $\qvdash$ is \emph{$\Sigma^0_1$-preserving} if for every $\Sigma^0_1$-formula $\varphi(G, x)$ and every condition~$c$, the set $\{ x \in \NN : c \qvdash \varphi(G, x) \}$ is $\Sigma^0_1$.
\end{definition}

\begin{definition}
A forcing question $\qvdash$ is \emph{$\Sigma^0_1$-compact} if for every $\Sigma^0_1$-formula $\varphi(G, x)$,
and every condition~$c$, if $c \qvdash \exists x \varphi(G, x)$, then there is some bound~$b \in \NN$ such that $c \qvdash (\exists x \leq b)\varphi(G, x)$.
\end{definition}

The definitions above should be adapted to the actual notions of forcing we consider. For instance, when working with a Mathias-like notion of forcing, we shall say that a forcing question is $\Sigma^0_1$-preserving is the set $\{ x \in \NN : c \qvdash \varphi(G, x) \}$ is $\Sigma^0_1$ relative to the reservoir.

\begin{definition}\label[lemma]{lem:question-complete}
    Let $\exists x \varphi(G,x)$ be a $\Sigma^0_1$-formula, $c = (\sigma, X)$ be a condition. We define the forcing question $\qvdash$ as follows :
    $c \qvdash \exists x\varphi(G,x)$ if for all 2-coloring $\hat g : \N \to 2$, there exists $x \in \NN$ and a finite set $\rho \subseteq X$ which $(f,\hat g)$-avoids $p$ such that $\varphi(\sigma \cup \rho, x)$ holds.
\end{definition}

The following lemma states that the relation defined above satisfies the specifications of a forcing question for $\Sigma^0_1$-formulas:

\begin{lemma}\label[lemma]{lem:question-extension}
    Let $c$ be a condition and $\varphi$ a $\Sigma_1^0$ formula.
    \begin{itemize}
        \item If $c \qvdash \varphi(G)$, there exists $d \leq c$ such that $d \Vdash \varphi(G)$;
        \item If $c \nqvdash \varphi(G)$, there exists $d \leq c$ such that $d \Vdash \neg \varphi(G)$;
    \end{itemize}
\end{lemma}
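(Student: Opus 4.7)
The plan is to treat each clause via a Mathias-style extension argument that relies crucially on the computable domination of the reservoir $X$ built into the conditions. From $X$, I will $X$-computably extract an infinite $f$-cohesive subset $Y \subseteq X$ by iteratively restricting to the infinite color class of each $f(n,\cdot)$ and diagonalizing via a computable bound on the principal function of $X$. The resulting $Y \leq_T X$ is still computably dominated, and its stabilization witness $g^*_Y : \NN \to 2$, defined by $g^*_Y(n) = \lim_{y \in Y} f(n,y)$, extends the original witness $g$ for $X$ stabilizing $\sigma$.

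For the first clause, assume $c = (\sigma, X) \qvdash \varphi(G)$ and pick a cohesive $Y \subseteq X$ as above. Applying the definition of $\qvdash$ with $\hat g := g^*_Y$ furnishes a finite $\rho \subseteq X$ and an $x$ such that $\rho$ $(f, g^*_Y)$-avoids $p$ and $\varphi(\sigma \cup \rho, x)$ holds. Taking $Y' := Y \cap (\max \rho, \infty)$ gives a reservoir that $f$-stabilizes $\sigma \cup \rho$ with witness $g^*_Y \uh (\sigma \cup \rho)$. Since $\rho \subseteq X$ $f$-stabilizes $\sigma$ with $g = g^*_Y \uh \sigma$ and both $\sigma$ and $\rho$ individually $(f, g^*_Y)$-avoid $p$, \Cref{lem:combinatorial-extensibility} promotes this to $\sigma \cup \rho$ $(f, g^*_Y)$-avoiding $p$. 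Hence $d := (\sigma \cup \rho, Y')$ is a valid extension of $c$, and by the $\Sigma_1^0$ nature of $\varphi$ witnessed at the finite part $\sigma \cup \rho$, one gets $d \Vdash \varphi(G)$.

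For the second clause, assume $c \nqvdash \varphi(G)$, witnessed by a 2-coloring $\hat g$ for which no finite $\rho \subseteq X$ that $(f, \hat g)$-avoids $p$ satisfies $\varphi(\sigma \cup \rho, x)$ for any $x$. My aim is to produce an infinite computably dominated $Y \subseteq X$ with stabilization witness $g^*_Y$ equal to $\hat g$ on $Y$; the extension $d := (\sigma, Y)$ will then force $\neg \varphi(G)$, since any further extension by a finite $\rho \subseteq Y$ forces $\rho$ to $(f, \hat g)$-avoid $p$, ruling out $\varphi$ by the choice of $\hat g$. The main obstacle is that an arbitrary $\hat g$ need not be realizable as any $g^*_Y$. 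I plan to overcome this by aligning the cohesive extraction with $\hat g$ on the fly: at stage $n$, restrict $X_n$ to $\{y \in X_n : f(n,y) = \hat g(n)\}$ when infinite, and otherwise to the opposite color class while replacing $\hat g(n)$ by its flipped value. Verifying that the modified $\hat g$ still witnesses the negation of $\qvdash$ relative to the subreservoir constructed is the main technical step, and relies on the observation that the flipped positions correspond exactly to those $n$ where every infinite subreservoir of $X$ already forces $f(n,\cdot)$ to take the opposite color.
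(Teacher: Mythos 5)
Your proposal does not follow the paper's route, and in both clauses it rests on steps that fail. The first clause hinges on the claim that you can ``$X$-computably extract an infinite $f$-cohesive subset $Y \subseteq X$''. This is not possible in general: at each stage you must decide which color class $\{y \in X_n : f(n,y)=i\}$ is infinite, a $\Pi^0_2(X)$ question that a computable bound on the principal function of $X$ does not help answer; indeed there are computable colorings $f$ (already with $X = \NN$) admitting no $X$-computable infinite set on which every $f(n,\cdot)$ converges — this is exactly the computable non-triviality of cohesiveness. Since a condition requires its reservoir to be computably dominated and your only argument for that is $Y \leq_T X$, the construction of $d$ is not secured. The paper never needs full cohesiveness: because $c \qvdash \varphi(G)$ quantifies over \emph{all} total colorings $\hat g$, a compactness argument yields a bound $n$ such that only $\hat g \uh [0,n]$ and sets $\rho \subseteq X \cap [0,n]$ matter; one then takes $Y \subseteq X$ $f$-stabilizing the finite set $[0,n]$ (finitely many pigeonhole steps, hence $X$-computable and computably dominated) and applies the finitized question to the witness that this $Y$ realizes. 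Your subsequent use of \Cref{lem:combinatorial-extensibility} to conclude that $\sigma \cup \rho$ $(f,\hat g)$-avoids $p$ is the same as in the paper, but it is downstream of the flawed extraction.

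The second clause has two gaps. First, the same computability problem recurs: your ``on the fly'' alignment again decides infinitude questions, so the resulting reservoir is not $X$-computable and nothing guarantees it is computably dominated; the paper instead notes that the set of colorings $\hat g$ witnessing $c \nqvdash \varphi(G)$ is a nonempty $\Pi^0_1(X)$ class and applies the computably dominated basis theorem to choose $\hat g$ with $\hat g \oplus X$ computably dominated — a step entirely absent from your plan. Second, and more seriously, flipping $\hat g(n)$ when the matching class is finite can destroy the witness property: the negative answer only asserts that \emph{some} $\hat g$ is bad, and badness is a statement about which finite $\rho \subseteq X$ $(f,\hat g)$-avoid $p$ while satisfying $\varphi(\sigma\cup\rho)$; after a flip, new such $\rho$ can appear, and your ``observation'' about $f$'s limiting behavior on subreservoirs does not rule this out. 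The key idea you are missing is that one does not need the reservoir's limit coloring to agree with $\hat g$ at all: take $Y \subseteq X$ to be $\hat g$-\emph{homogeneous} (pigeonhole on the two $\hat g$-classes), and use that $p$ is divergent, so every finite subset of $Y$ that $f$-avoids $p$ automatically $(f,\hat g)$-avoids $p$; consequently any $\Sigma^0_1$ witness arising from a generic extension of $(\sigma, Y)$ would yield a $\rho \subseteq X$ contradicting the choice of $\hat g$, and $(\sigma,Y)$ (with the original stabilization witness $g$, which is unaffected by shrinking the reservoir) forces $\neg\varphi(G)$. Without the divergence-plus-homogeneity argument and the basis-theorem control of $\hat g$, your second clause does not go through.
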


\begin{proof}
Let $c = (\sigma, X)$.
    \begin{itemize}
        \item Suppose  $c \qvdash \varphi(G)$, hence, for all 2-partition $\hat g : \N \to 2$, there exists a finite set $\rho \subseteq X$ which $(f,\hat g)$-avoids $p$, and such that $\varphi(\sigma \cup \rho)$ holds. By a compactness argument, there exists $n \in \NN$ such that for all 2-partition $\hat g : [0, n] \to 2$, there exists a finite set $\rho \subseteq X \cap \{0, \dots, n \}$ which $(f,\hat g)$-avoids $p$, and such that $\varphi(\sigma \cup \rho)$ holds. Let $Y \subseteq X$ be an infinite $X$-computable set $f$-stabilizing $[0, n]$ with witness~$\hat g : n \to 2$. Since~$Y \leq_T X$, then $Y$ is computably dominated. Let $\rho \subseteq X \cap \{0, \dots, n \}$ be a finite set $(f,\hat g)$-avoiding $p$, and such that $\varphi(\sigma \cup \rho)$ holds. Note that $\hat g \supseteq g$, so $\sigma$ $(f, \hat g)$-avoids~$p$, and by \Cref{lem:combinatorial-extensibility}, $\sigma \cup \rho$ $(f, \hat g)$-avoids~$p$.
        In all, $d := (\sigma \cup \rho, Y)$ is a valid condition extending $c$, and is such that $d \Vdash \varphi(G)$.
        
        \item Suppose $c \nqvdash \varphi(G)$, hence, there exists a 2-partition $\hat g : \N \to 2$, such that every finite set $\rho \subseteq X$ either does not $(f,\hat g)$-avoid $p$ or is such that $\neg \varphi(\sigma \cup \rho)$ holds. Consider the $\Pi_1^0$ class of every such functions $\hat g$. By the computably dominated basis theorem (see Jockusch and Soare~\cite{jockusch197classes}), there exists such a function $\hat g$ such that $\hat g \oplus X$ is computably dominated.
        Let $Y \subseteq X$ be a $\hat g$-homogeneous and $\hat g \oplus X$-computable infinite set. Since $p$ is divergent and $Y$ is $\hat g$-homogeneous, then every set which $f$-avoids~$p$ also $(f, \hat g)$-avoids~$p$. Therefore, the condition $d := (\sigma,Y)$ forces $\neg \varphi(G)$.
    \end{itemize}
    
\end{proof}

The following lemma states that the forcing question $\qvdash$ is $\Sigma_1^0$-preserving.
\begin{lemma}\label[lemma]{lem:question-sigma01}
Let $c =(\sigma,X)$ be a condition and $\varphi$ be a $\Sigma_1^0$ formula. The sentence \qt{$c \qvdash \varphi(G)$} is $\Sigma_1^0(X)$.
\end{lemma}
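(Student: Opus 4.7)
Write $\varphi(G) \equiv \exists x\, \psi(G,x)$ with $\psi$ a $\Sigma^0_0$ formula. Unfolding \Cref{def:forcing-notion} and \Cref{lem:question-complete}, the relation $c \qvdash \varphi(G)$ is, a priori, $\Pi^1_1$: a universal quantifier over $\hat g \in 2^{\NN}$ in front of a $\Sigma^0_1(X)$ matrix, since once $\hat g$ is fixed, searching for a finite $\rho \subseteq X$ and an $x \in \NN$ such that $\rho$ $(f,\hat g)$-avoids $p$ and $\psi(\sigma \cup \rho, x)$ holds is $\Sigma^0_1(X \oplus \hat g)$. The plan is to eliminate the quantifier over $\hat g$ by a compactness argument.

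The key observation is that the two conditions on the finite witness $\rho$ only depend on $\hat g$ through its values on $\rho$: whether $\rho$ $(f,\hat g)$-avoids $p$ is determined by $\hat g \uh \rho$, and whether $\psi(\sigma \cup \rho, x)$ holds does not involve $\hat g$ at all. I therefore claim that $c \qvdash \varphi(G)$ is equivalent to
\begin{center}
(*) $\exists n \in \NN$ such that for every $\hat g \in 2^{\{0,\dots,n\}}$, there exist $x \leq n$ and $\rho \subseteq X \cap \{0,\dots,n\}$ with $\rho$ $(f,\hat g)$-avoiding $p$ and $\psi(\sigma \cup \rho, x)$ witnessed by stage $n$.
\end{center}
The statement (*) is manifestly $\Sigma^0_1(X)$: for fixed $n$, the outer quantifier over $\hat g \in 2^{\{0,\dots,n\}}$ is finite, and for each such $\hat g$ one searches, bounded by $n$, for the finite objects $x$ and $\rho$; membership $\rho \subseteq X$ is decidable from $X$, and $(f,\hat g)$-avoidance of the fixed pattern $p$ by the finite set $\rho$ is decidable from $f$.

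The backward direction of the equivalence is immediate by taking the same $x$ and $\rho$ for each total $\hat g : \NN \to 2$. For the forward direction, I would consider the tree
\[
T \;=\; \bigl\{\, \tau \in 2^{<\NN} \;:\; \text{no } x \leq |\tau| \text{ and } \rho \subseteq X \cap \{0,\dots,|\tau|\} \text{ witness (*) for } \hat g = \tau \,\bigr\}
\]
which is closed under initial segments (a witness valid for $\tau \uh m$ remains valid for $\tau$, since $\rho \subseteq \{0,\dots,m\}$ and $(f,\hat g)$-avoidance of $p$ by $\rho$ depends only on $\hat g \uh \rho$). If $T$ were infinite, König's lemma would yield an infinite path $\hat g \in 2^{\NN}$ along which no finite $\rho \subseteq X$ and no $x \in \NN$ can ever witness the conditions of \Cref{lem:question-complete}, contradicting $c \qvdash \varphi(G)$. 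Hence $T$ is finite, bounded by some $n$, which is precisely (*).

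The only subtlety, and the step to execute carefully, is the monotonicity argument ensuring that $T$ is actually a tree: one has to verify that if a witness $(x,\rho)$ is available at level $|\tau|$ for $\tau$, then it is also available at every extension $\tau' \succeq \tau$, which reduces to the trivial fact that $\rho \subseteq \{0,\dots,|\tau|\} \subseteq \{0,\dots,|\tau'|\}$ and $\hat g$-avoidance only reads $\hat g$ on $\rho$. Once this is in hand, (*) is equivalent to $c \qvdash \varphi(G)$ and is $\Sigma^0_1(X)$.
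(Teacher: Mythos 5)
Your proof is correct and follows essentially the same route as the paper: the paper's proof is exactly the compactness argument you describe, asserting the equivalence of $c \qvdash \varphi(G)$ with the bounded statement \qt{there exists $n$ such that for all $\hat g : [0,n] \to 2$ there exist $x$ and $\rho \subseteq X$ which $(f,\hat g)$-avoids $p$ with $\varphi(\sigma\cup\rho,x)$}, which is $\Sigma^0_1(X)$. You merely spell out the compactness step (via the tree and K\"onig's lemma, using that $(f,\hat g)$-avoidance of a finite $\rho$ only reads $\hat g$ on $\rho$), which the paper leaves implicit.
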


\begin{proof}
    By a compactness argument, $c \qvdash \varphi(G)$ is equivalent to the $\Sigma^0_1(X)$ sentence \qt{there exists $n \in \NN$ such that for all for all 2-partition $\hat g : [0, n] \to 2$, there exists $x \in \NN$ and a finite set $\rho \subseteq X$ which $(f,\hat g)$-avoids $p$, and such that $\varphi(\sigma \cup \rho,x)$ holds.}
\end{proof}

The following lemma states that the forcing question $\qvdash$ is $\Sigma_1^0$-compact.

\begin{lemma}\label[lemma]{lem:question-compact}
Let $\exists x \, \varphi(G,x)$ be a $\Sigma_1^0$ formula and $c = (\sigma,X)$ be a condition such that $c \qvdash \exists x \varphi(G,x)$. Then, there exists  $\ell$ such that $c \qvdash \exists x \, \varphi(G,x) \land x \leq \ell$.
\end{lemma}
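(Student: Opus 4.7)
The plan is to mimic the compactness argument already used in the proof of \Cref{lem:question-sigma01}, but tracking bounds on the existential witness~$x$ in addition to the oracle coloring~$\hat g$ and the finite witness~$\rho$. Write the $\Sigma^0_1$ formula as $\varphi(G,x) \equiv \exists y\, \psi(G \uh y, x, y)$ with $\psi$ being $\Delta^0_0$, arranging (as one may without loss of generality) that both~$x$ and the use of~$G$ are bounded by~$y$. For each $n \in \NN$, declare a 2-coloring $\hat g : [0,n] \to 2$ to be \emph{$n$-bad} if there is no tuple $(x, y, \rho)$ with $y \leq n$, $x \leq y$, $\rho \subseteq X \cap [0,n]$, such that $\rho$ $(f, \hat g)$-avoids~$p$ and $\psi(\sigma \cup \rho, x, y)$ holds.

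The set of $n$-bad strings forms a tree $T \subseteq 2^{<\NN}$ under extension. The key observation is that if $T$ were infinite, then by König's lemma it would admit an infinite path $\hat g : \NN \to 2$. For this $\hat g$, no tuple $(x, \rho)$ with $\rho \subseteq X$ $(f, \hat g)$-avoiding~$p$ could witness $\varphi(\sigma \cup \rho, x)$: any such witness would use only finitely many bits of $\hat g$ and live below some level $n$, contradicting $n$-badness. This would contradict the hypothesis $c \qvdash \exists x\, \varphi(G,x)$. Hence $T$ is finite, and there exists some level~$n$ at which $T$ has no nodes.

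Setting $\ell = n$, I claim $c \qvdash \exists x\, (\varphi(G,x) \land x \leq \ell)$. Indeed, for every 2-coloring $\hat g : \NN \to 2$, the restriction $\hat g \uh [0,n]$ is not in $T$, so there exist $x \leq n = \ell$, $y \leq n$ and $\rho \subseteq X \cap [0,n]$ with $\rho$ $(f, \hat g \uh [0,n])$-avoiding~$p$ and $\psi(\sigma \cup \rho, x, y)$. Since $\rho \subseteq [0,n]$, the $(f, \hat g)$-avoidance property depends only on the values of~$\hat g$ on elements of~$\rho$, so $\rho$ also $(f, \hat g)$-avoids~$p$, and $(x, \rho)$ witnesses the desired bounded forcing question.

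The only subtlety worth flagging is ensuring the König-style tree is set up so that a path through it truly refutes the forcing question: this forces us to bundle the $\Sigma^0_1$ use~$y$ together with the existential witness~$x$ into a single bound, which is why we rewrite $\varphi$ in prenex form and restrict $\rho$ to $X \cap [0,n]$ from the outset, just as in \Cref{lem:question-sigma01}.
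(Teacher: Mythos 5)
Your proof is correct and follows essentially the same compactness argument as the paper: both reduce the forcing question to colorings of a finite initial segment $[0,n]$ and extract the bound from there. The only cosmetic difference is that you fold the bound on the witness $x$ into the compactness level itself (via the normalization $x \leq y \leq n$ in the K\"onig's-lemma tree), whereas the paper first obtains $n$ as in \Cref{lem:question-sigma01} and then sets $\ell = \max\{x_{\hat g} : \hat g : [0,n] \to 2\}$ over the finitely many colorings of $[0,n]$.
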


\begin{proof}
    As stated in the previous proof, $c \qvdash \exists x \varphi(G,x)$ implies that there is some~$n \in \NN$ such that for all for all 2-partition $\hat g : [0, n] \to 2$, there exists $x_{\hat g} \in \NN$ and a finite set $\rho \subseteq X$ which $(f,\hat g)$-avoids $p$, and such that $\varphi(\sigma \cup \rho,x_{\hat g})$ holds. Consider $\ell = \max \{ x_{\hat g}\ |\ \hat g : [0,n] \to 2 \}$. Then, for all 2-partition there exists $x \leq \ell$ and a finite set $\rho \subseteq X$ which $(f,\hat g)$-avoids $p$, and such that $\varphi(\sigma \cup \rho,x)$ holds.

\end{proof}

The following lemma is the standard diagonalization lemma which holds for every notion of forcing admitting a $\Sigma^0_1$-preserving, $\Sigma^0_1$-compact forcing question for $\Sigma^0_1$-formulas (see Patey~\cite[Chapter 3]{pateylowness}). We reprove it for the sake of completeness.

\begin{lemma}\label[lemma]{lem:preservation-hyperimmunity}
Let $c$ be a condition, $h$ be a hyperimmune function, and $e$ be a Turing index. There exists an extension $d \leq c$ such that $d \Vdash \exists x \Phi_e(G,x) \downarrow< h(x)$ or $d \Vdash \Phi_e(G) \uparrow$.
\end{lemma}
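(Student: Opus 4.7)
The plan is to run the standard diagonalization scheme for preservation of hyperimmunity, built on top of the $\Sigma^0_1$-preserving and $\Sigma^0_1$-compact forcing question $\qvdash$ established in \Cref{lem:question-extension,lem:question-sigma01,lem:question-compact}. The crucial ingredient that makes it work is the clause in \Cref{def:forcing-notion} demanding that the reservoir of a condition be computably dominated. I would split into two cases depending on whether the forcing question decides totality of $\Phi_e$ on some input.

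For each $x \in \NN$ consider the $\Sigma^0_1$ formula $\psi_x(G) \equiv \Phi_e^G(x)\converge$. If there is some $x$ with $c \nqvdash \psi_x(G)$, then \Cref{lem:question-extension} yields an extension $d \leq c$ forcing $\neg\psi_x(G)$, hence $d \Vdash \Phi_e(G)\diverge$, and we are done. Otherwise, $c \qvdash \psi_x(G)$ for every $x$. Applying \Cref{lem:question-compact} to the $\Sigma^0_1$ formula $\exists v\, \Phi_e^G(x)\converge = v$ (with $x$ a fixed parameter), for each $x$ let $\ell_x$ be the least integer such that $c \qvdash \exists v \leq \ell_x\, \Phi_e^G(x)\converge = v$. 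By \Cref{lem:question-sigma01}, the relation $c \qvdash \exists v \leq \ell\, \Phi_e^G(x)\converge = v$ is $\Sigma^0_1(X)$, so $x \mapsto \ell_x$ is a total $X$-computable function.

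Since $c = (\sigma, X)$ is a condition, $X$ is computably dominated, so there exists a computable function $g$ with $g(x) \geq \ell_x$ for every $x$. The hyperimmunity of $h$ then yields some $x_0$ with $h(x_0) > g(x_0) \geq \ell_{x_0}$, and therefore $c \qvdash \exists v < h(x_0)\, \Phi_e^G(x_0)\converge = v$. One last application of \Cref{lem:question-extension} produces an extension $d \leq c$ forcing $\exists x\, \Phi_e(G,x)\converge < h(x)$, which is the other disjunct of the desired conclusion. The only delicate point is recognizing that the computably dominated clause in the definition of a condition is exactly what translates the $X$-computable bound $\ell_x$ into a computable bound that the hyperimmunity of $h$ can defeat; the rest is a routine instantiation of the generic preservation-of-hyperimmunity template of~\cite{pateylowness}.
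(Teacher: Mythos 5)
Your proof is correct and follows essentially the same route as the paper's: decide $c \qvdash \Phi_e(G,x)\converge$ for each $x$, use $\Sigma^0_1$-compactness and $\Sigma^0_1$-preservation to extract an $X$-computable bound $x \mapsto \ell_x$, and exploit the computably dominated reservoir together with the hyperimmunity of $h$ to find $x_0$ with $\ell_{x_0} < h(x_0)$ before applying \Cref{lem:question-extension}. The only differences are cosmetic (you bound the output value explicitly and invoke a dominating computable function by name), so there is nothing to correct.
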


\begin{proof}

    Suppose $c \nqvdash \Phi_e(G,x) \downarrow$ for some $x \in \NN$. Then, by \Cref{lem:question-extension}, there exists $d \leq c$ such that $d \Vdash \Phi_e(G,x)\uparrow$. Now, suppose that for every $x \in \NN$, $c \qvdash \exists a \exists t \Phi_e(G,x)[t]=a$. Then, by \Cref{lem:question-compact}, for all $x$, there exists $\ell_x$ such that $c \qvdash \exists t \Phi_e(G,x)[t] \leq \ell_x$ holds. By \Cref{lem:question-sigma01}, the function $x \mapsto \ell_x$ is partial $X$-computable, and by hypothesis, it is total. Since $X$ is computably dominated, $x \mapsto \ell_x$ does not dominate $h$. This yields that there exists $x$ such that $\ell_x < h(x)$. By \Cref{lem:question-extension}, there exists $d \leq c$ such that $d \Vdash \Phi_e(G,x)\downarrow \leq \ell_x$ for that $x$, and thus $d \Vdash \Phi_e(G,x)\downarrow < h(x)$.
\end{proof}

We are now ready to prove \Cref{thm:divergent-extensible-preservation-hyperimmunities}.

\begin{proof}[Proof of \Cref{thm:divergent-extensible-preservation-hyperimmunities}]
Let $h_0, h_1, \dots$ be a countable sequence of hyperimmune functions and $f : [\NN]^2 \to 2$ be a computable coloring. 
Let $\F$ be a sufficiently generic filter for the associated notion of forcing. By \Cref{lem:omega-hyp-generic-infinite}, the set $G_\F$ is infinite, and by definition of a forcing condition, $G_\F$ $f$-avoids~$p$. Moreover, by \Cref{lem:preservation-hyperimmunity}, $h_i$ is $G_\F$-hyperimmune for every~$i \in \NN$.
This completes the proof of \Cref{thm:divergent-extensible-preservation-hyperimmunities}.
\end{proof}

\begin{corollary}\label[corollary]{cor:sub-pattern-preserves-omega-hyperimmunities}
Let $p$ be a pattern containing as a sub-pattern a divergent, irreducible pattern.
Then $\RT^2_2(p)$ preserves $\omega$ hyperimmunities.
\end{corollary}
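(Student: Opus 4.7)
The plan is to obtain the corollary as an immediate consequence of \Cref{thm:divergent-extensible-preservation-hyperimmunities} combined with (the proof of) \Cref{lem:sub-pattern-implication}. Let $q$ be a divergent, irreducible sub-pattern of~$p$. Fix a set~$Z$, a countable family $\langle h_i : i \in \NN\rangle$ of $Z$-hyperimmune functions, and a $Z$-computable coloring $f : [\NN]^2 \to 2$ viewed as an instance of~$\RT^2_2(p)$. The same coloring~$f$ is also an instance of~$\RT^2_2(q)$.

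By the relativization of \Cref{thm:divergent-extensible-preservation-hyperimmunities} to~$Z$, there is an infinite set~$H$ that $f$-avoids~$q$ and such that every $h_i$ is still $H \oplus Z$-hyperimmune. The proof of \Cref{lem:sub-pattern-implication} shows that whenever~$q$ embeds into~$p$, every finite set $f$-realizing~$p$ contains a subset $f$-realizing~$q$, so any infinite set $f$-avoiding~$q$ also $f$-avoids~$p$. Hence $H$ is a solution to the $\RT^2_2(p)$-instance~$f$, witnessing preservation of $\omega$ hyperimmunities.

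There is no real obstacle: the only thing to check is that \Cref{thm:divergent-extensible-preservation-hyperimmunities} relativizes to an arbitrary oracle~$Z$. This is routine, since the forcing notion of \Cref{def:forcing-notion} and the associated forcing question only refer to $f$, $p$ and computable-domination, all of which relativize uniformly to~$Z$ (replacing \qt{computable} by \qt{$Z$-computable} and using the relativized computably dominated basis theorem in the proof of \Cref{lem:question-extension}). Thus the proof of \Cref{thm:divergent-extensible-preservation-hyperimmunities} yields the required relativized preservation statement, and the corollary follows.
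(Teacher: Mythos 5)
Your proof is correct and follows essentially the same route as the paper: apply \Cref{thm:divergent-extensible-preservation-hyperimmunities} (relativized) to the divergent, irreducible sub-pattern~$q$, then observe via \Cref{lem:sub-pattern-implication} that any infinite set $f$-avoiding~$q$ also $f$-avoids~$p$. Your extra remark that the forcing argument relativizes routinely is a fair elaboration of what the paper leaves implicit.
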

\begin{proof}
Immediate by \Cref{thm:divergent-extensible-preservation-hyperimmunities} and the fact that if $q$ is a sub-pattern of~$p$, then every $\RT^2_2(q)$-solution is an $\RT^2_2(p)$-solution.
\end{proof}

The remainder of this section is devoted to the proof of the reciprocal, that is, if a pattern~$p$ does not contains any sub-pattern which is simultaneously divergent and irreducible, then $\RT^2_2(p)$ does not preserve $\omega$ hyperimmunities simultaneously. Actually, we shall prove that $\RT^2_2(p)$ does not even preserve 2 hyperimmunities simultaneously (\Cref{cor:sub-pattern-not-2-hyperimmunity}).

For this, we need a stronger notion of appearance, which does not only state that the pattern~$p$ appears in the set, but that $p^-$ appears with the right limit, in the sense defined below. A coloring $f : [\NN]^2 \to 2$ is \emph{stable} if for every~$x \in \NN$, $\lim_y f(x, y)$ exists.

\begin{definition}
Let $f : [\NN]^2 \to 2$ be a stable coloring and $p : [\ell]^2 \to 2$ be a pattern of size~$\ell$.
We say that a finite set $F = \{ x_0 < \dots < x_{\ell-2} \}$ \emph{strongly $f$-realizes} $p$ if $F$ $f$-realizes~$p^-$ and for every $i < \ell-1$ and all but finitely many~$y \in \NN$, $f(x_i, y) = p(i, \ell-1)$. 
We say the pattern $p$ \emph{strongly $f$-appears} in a set $H$ if there exists a finite subset $F \subseteq H$ which strongly $f$-realizes $p$.
\end{definition}

\begin{remark}\label[remark]{rem:strongly-appears-implies-appears}
Note that if $H$ is infinite, and $p$ strongly $f$-appears in $H$, then $p$ $f$-appears in $H$. Indeed, consider a finite set $R$ strongly $f$-realizing $p$ in $H$ (such exists by definition of strong $f$-appearance), and let $t \in H$ be such that every element of $R$ has reached its $f$-limit from $t$ on, $t$ included (such $t$ exists by infinity of $H$). Then $R \cup \{ t \}$ $f$-realizes $p$.
\end{remark}

\begin{theorem}\label[theorem]{thm:sub-pattern-convergent-reducible-strongly-appears}
    Let $p$ be a pattern with $|p| \geq 2$, such that all of its sub-patterns are convergent or reducible, and let $A$ be a $\Delta_2^0$ bi-hyperimmune infinite set. Let $f : [\N]^2 \to 2$ be a  $\Delta_2^0$ approximation of $A$. For every infinite set $H$ such that $A$ and $\overline{A}$ are $H$-hyperimmune, 
    the pattern $p$ strongly $f$-appears in~$H$.
\end{theorem}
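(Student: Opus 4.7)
The plan is to proceed by induction on $|p|$. Since $p$ is a sub-pattern of itself, the hypothesis forces $p$ to be either reducible or convergent, and this case split drives the induction. In the base case $|p|=2$, strong $f$-realization amounts to choosing $x_0 \in H$ with $A(x_0) = p(0,1)$, which is possible because both $A \cap H$ and $\overline{A} \cap H$ are infinite: if, say, $A \cap H$ were finite, then $A$ would agree cofinitely with a subset of the complement of $H$, so the principal function of $A$ would be dominated by an $H$-computable function, violating $H$-hyperimmunity.

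If $p = p_0 \uplus p_1$ is reducible (with $|p_0|, |p_1| \geq 2$), then the sub-patterns of $p_0$ and $p_1$ are sub-patterns of $p$, so both satisfy the hypothesis. I would apply the induction hypothesis to $p_0$ with $H$ to obtain a set $L \subseteq H$ strongly $f$-realizing $p_0$. Then, choosing $N$ past the stabilization stage of $f(x_i, \cdot)$ for every $x_i \in L$, I would apply the induction hypothesis to $p_1$ with $H \cap (N, \infty)$ --- still infinite and Turing-below $H$, so $A$ and $\overline{A}$ remain hyperimmune relative to it --- to obtain $R$ strongly $f$-realizing $p_1$. A direct case analysis on the definition of $\uplus$ then shows that $F := L \cup R$ strongly $f$-realizes $p$: pairs inside $L$ match $p_0^-$, pairs inside $R$ match $p_1^-$, and a mixed pair $(x_i, x_j)$ with $x_i \in L$, $x_j \in R$ takes the value $A(x_i) = p_0(i, |p_0|-1)$, which is exactly the value prescribed by the third clause of the definition of $\uplus$.

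The main obstacle is the irreducible case, where the hypothesis forces $p$ to be convergent; let $c$ be the constant value of its last column, and without loss of generality $c=1$. Here the idea is to argue by contradiction through an $H$-c.e. array. Suppose no $F \subseteq H$ strongly $f$-realizes $p$. Unwinding the definition, this says that every finite $F \subseteq H$ which $f$-realizes $p^-$ must intersect $\overline{A}$, since any such $F$ contained in $A$ would itself strongly $f$-realize $p$. The pattern $p^-$ has its sub-patterns among those of $p$, hence satisfies the hypothesis; and for every $n$ the tail $H \cap (n, \infty)$ is infinite and Turing-below $H$, so preserves the $H$-hyperimmunities of $A$ and $\overline{A}$. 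Applying the induction hypothesis to $p^-$ on each tail --- together with \Cref{rem:strongly-appears-implies-appears} to pass from strong appearance to plain appearance --- yields, for every $n$, some $F_n \subseteq H$ with $\min F_n > n$ that $f$-realizes $p^-$. Since $f$ is computable, such an $F_n$ can be located $H$-computably by brute-force search over finite subsets of $H$, so $\vec F = \langle F_n : n \in \NN \rangle$ is an $H$-c.e. array. By assumption $F_n \cap \overline{A} \neq \emptyset$ for every $n$, which directly contradicts the $H$-hyperimmunity of $\overline{A}$. The subtle point is combining the inductive existence statement with the computability of $f$ to extract an effective array from purely existential information, and recognizing that the obstruction to strongly realizing an irreducible convergent pattern is precisely what $\overline{A}$-hyperimmunity is designed to rule out.
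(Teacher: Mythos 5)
Your proof is correct and follows essentially the same route as the paper's: the paper splits on convergent versus non-convergent (hence reducible), you split on reducible versus irreducible (hence convergent), but the two mechanisms are identical --- gluing strong realizers of the two halves of a join past a stabilization point, and extracting an $H$-computable array of realizers of $p^-$ and hitting it with hyperimmunity of the appropriate side (the paper argues this branch directly, you by contradiction). The only slip is the justification in your base case: if $A \cap H$ were finite, then $A \subseteq^* \overline{H}$ gives a \emph{lower} bound on the principal function of $A$, so it is the principal function of $\overline{A}$ (which almost contains $H$), not of $A$, that gets dominated by an $H$-computable function; more simply, applying $H$-hyperimmunity of $A$ and of $\overline{A}$ to the singleton arrays $F_n = \{\min(H \cap (n,\infty))\}$ immediately shows that $H$ meets both $\overline{A}$ and $A$ infinitely often, which is all you need.
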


\begin{proof}
    Let us prove the desired result by induction on the size of~$p$. Let $H$ be an infinite set such that $A$ and $\overline{A}$ are $H$-hyperimmune. 
    
    First, suppose that $p$ is convergent. If $|p| = 2$, then trivially, for every~$k \in \NN$, $p^-$ $f$-appears in~$H \cap (k, \infty)$. If $|p| > 2$, then by induction hypothesis, for every~$k \in \NN$, $p^-$ strongly $f$-appears in $H \cap (k, \infty)$, so by \Cref{rem:strongly-appears-implies-appears}, $p^-$ $f$-appears in $H \cap (k, \infty)$. Since $f$ is computable, one can find $H$-computably an infinite array $F_0, F_1, \dots \subseteq H$ such that each block $f$-realizes $p^-$. Now, suppose w.l.o.g. that $p(0,|p|-1)=0$. Then, by $H$-hyperimmunity of $A$ there exists some~$n \in \NN$ such that such that $F_n \subseteq \overline{A}$. Since~$f$ is a $\Delta^0_2$-approximation of~$A$,  for all $x \in F_n$, $\lim_{y \in \NN} f(x,y) = 0$. This yields that $F_n$ strongly $f$-realizes~$p$, so $p$ strongly appears in $H$.

    Suppose now that $p$ is not convergent. By assumption, $p$ is then reducible. As such, by \Cref{lem:irreducible-is-extensible} there exists a partition $F \cup G$ of $|p|$ such that $\card F > 0$, $\card G \geq 2$, $F < G$, and such that for all $x \in F$, and $\forall y,z \in G$, $p(x,y) = p(y,z)$. Let $z = \min G$. By induction hypothesis, $p \uh_{F \cup \{ z \}}$ strongly appears in $H$ and for every $k \in \NN$, $p \uh_G$ strongly appears in~$H \cap (k, \infty)$. Let $R = \{ x_0, \dots, x_{|F|-1} \} \subseteq H$ be strongly $f$-realizing $p \uh_{F \cup \{ z \}}$. Let $k$ be large enough so that all elements of~$R$ have reached their limit and $S = \{ x_{|F|}, \dots, x_{|p|-1} \} \subseteq H \cap (k, \infty)$ be strongly $f$-realizing $p \uh_G$.

    We first claim that $R \cup S$ $f$-realizes $p^- = p \uh_{F \cup G^-}$. Let $i < j \in F \cup G^-$, where $G^- = G \setminus \{\max G\}$. If $j \in F$, then since $R$ $f$-realizes $p \uh_F$, $f(x_i, x_j) = p(i, j)$. If $i \in G^-$, then since $S$ $f$-realizes $p \uh_{G^-}$, $f(x_i, x_j) = p(i, j)$. If $i \in F$ and $j \in G^-$, then since $R$ strongly $f$-realizes $p \uh_{F \cup \{ z \}}$, $f(x_i,x_j)= p(i,z)$. However, since $p$ is reducible, $p(i,z)=p(i,j)$, so $f(x_i, x_j) = p(i, j)$. It follows that $R \cup S$ $f$-realizes $p^-$.
    
    We now claim that $R \cup S$ strongly $f$-realizes $p$. Let $i \in F \cup G^-$. If $i \in F$, since $p$ is reducible, $p(i, z) = p(i, |p|-1)$. Since $R$ strongly $f$-realizes $p \uh_{F \cup \{ z \}}$, for cofinitely many~$y$, $f(x_i, y) = p(i, z) = p(i, |p|-1)$. If $i \in G^-$, since $S$ strongly $f$-realizes $p \uh_G$, for cofinitely many $y$, $f(x_i,y) = p(i, |p|-1)$. It follows that $R \cup S$ strongly $f$-realizes $p$.
\end{proof}

\begin{corollary}\label[corollary]{cor:sub-pattern-not-2-hyperimmunity}
Let $p$ be a pattern with $|p| \geq 2$, such that all of its sub-patterns are convergent or reducible. Then $\RT^2_2(p)$ does not preserve 2 hyperimmunities, as witnessed by a stable coloring.
\end{corollary}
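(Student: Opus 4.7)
The plan is to reduce this corollary directly to \Cref{thm:sub-pattern-convergent-reducible-strongly-appears}, by supplying a computable stable witness coloring together with the two hyperimmune objects.

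First, I will invoke the classical fact that there exists a $\Delta_2^0$ set $A \subseteq \NN$ which is bi-hyperimmune, i.e.\ such that both $A$ and $\overline A$ are hyperimmune. By Shoenfield's limit lemma, $A$ has a computable $\{0,1\}$-valued approximation, i.e.\ a computable function $f : [\NN]^2 \to 2$ with $\lim_{y \to \infty} f(x,y) = A(x)$ for every $x$. This $f$ is a stable computable 2-coloring of pairs, hence a $\emptyset$-computable instance of $\RT^2_2(p)$.

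Next, take $Z = \emptyset$ and consider the two principal functions $p_A, p_{\overline A} : \NN \to \NN$ that enumerate $A$ and $\overline A$ in increasing order. Since a set is $Y$-hyperimmune iff its principal function is $Y$-hyperimmune as a function, and $A$, $\overline A$ are hyperimmune, both $p_A$ and $p_{\overline A}$ are hyperimmune functions. These are the two hyperimmune functions witnessing the failure of preservation.

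Now suppose towards a contradiction that $\RT^2_2(p)$ preserves $2$ hyperimmunities. Then there is an infinite $H \subseteq \NN$ which $f$-avoids $p$ and such that both $p_A$ and $p_{\overline A}$ are $H$-hyperimmune, equivalently such that $A$ and $\overline A$ are both $H$-hyperimmune as sets. The hypotheses of \Cref{thm:sub-pattern-convergent-reducible-strongly-appears} are then met for $H$, so $p$ strongly $f$-appears in $H$. By \Cref{rem:strongly-appears-implies-appears}, $p$ $f$-appears in $H$, contradicting the assumption that $H$ $f$-avoids $p$.

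The only mildly technical point is the existence of a $\Delta_2^0$ bi-hyperimmune set (standard; for instance any $\Delta_2^0$ $1$-generic or $\Delta_2^0$ weakly $2$-random works) and the equivalence between the hyperimmunity of a set and of its principal function. Both are classical computability-theoretic facts; once they are in hand, the corollary is an essentially immediate corollary of \Cref{thm:sub-pattern-convergent-reducible-strongly-appears}.
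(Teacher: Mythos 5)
Your proposal is correct and follows essentially the same route as the paper: instantiate the instance with a computable stable approximation of a $\Delta^0_2$ bi-hyperimmune set, pass from the two hyperimmune sets to hyperimmune (principal) functions to match the definition of preservation, and derive a contradiction from \Cref{thm:sub-pattern-convergent-reducible-strongly-appears} together with \Cref{rem:strongly-appears-implies-appears}; your write-up is in fact somewhat more careful than the paper's terse proof. One small aside to fix: a $\Delta^0_2$ weakly $2$-random set does not exist (the singleton of a $\Delta^0_2$ set is a null $\Pi^0_2$ class), so keep the $\Delta^0_2$ $1$-generic as the witness of bi-hyperimmunity.
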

\begin{proof}
Suppose for the contradiction that $\RT^2_2(p)$ preserves 2 hyperimmunities. Let $A$ be a $\Delta^0_2$ bi-hyperimmune set, of $\Delta^0_2$-approximation $f : [\NN]^2 \to 2$, and let $H$ be an infinite set $f$-avoiding $p$ and such that $A$ and $\overline{A}$ are both $H$-hyperimmune. By \Cref{rem:strongly-appears-implies-appears}, $p$ does not strongly $f$-appear in $H$, so by \Cref{thm:sub-pattern-convergent-reducible-strongly-appears}, $p$ contains a sub-pattern which is both divergent and irreducible.
\end{proof}

We are now ready to prove \Cref{omega-hyp-divergent-irreducible}.
\begin{proof}[Proof of \Cref{omega-hyp-divergent-irreducible}]
Suppose first $p$ contains a divergent and irreducible sub-pattern. Then by \Cref{cor:sub-pattern-preserves-omega-hyperimmunities}, $\RT^2_2(p)$ preserves $\omega$ hyperimmunities. 
Suppose now that $p$ does not contain such a sub-pattern. Then by \Cref{cor:sub-pattern-not-2-hyperimmunity}, $\RT^2_2(p)$ does not preserve 2 (and a fortiori $\omega$) hyperimmunities.
\end{proof}

\section{Preservation of 2-dimensional hyperimmunity}\label[section]{sect:2dim-hyp}

In this section, we prove a similar characterization theorem for a variant of hyperimmunity called 2-dimensional hyperimmunity.
As mentioned in \Cref{sec:preservation-immunities}, this variant might seem much more ad-hoc than hyperimmunity, but it is arguably the natural combinatorial notion obtained by designing an invariant property not preserved by the Erd\H{o}s-Moser theorem. It is defined and  successfully used by Liu and Patey~\cite{liu2022reverse} to prove that the free set theorem does not imply $\EM$ over~$\omega$-models.
In an follow-up paper, Le Houérou and Patey~\cite{houerou2025ramseylike} proved that if a Ramsey-like theorem does not preserve $\omega$ hyperimmunities, then it implies $\RT^2_2$ over $\omega$-models. We conjecture that, similarly, if a Ramsey-like theorem does not preserve one 2-dimensional hyperimmunity, then it implies $\EM$ over $\omega$-models.

\begin{definition}
    A \emph{bi-family} is a collection $\Hc$ of ordered pairs of finite sets closed under subset product, i.e., if $(A, B) \in \Hc$ and $C \subseteq A$ and $D \subseteq B$, then $(C,D) \in \Hc$.
    A \emph{bi-array} is a collection of finite sets $(\vec{E}, \vec{F}) = \langle E_n, F_{n,m} : n,m, \in \NN \rangle$ such that $\min E_n > n$, $\min F_{n,m} > m$ for every $n,m \in \NN$. A bi-array $(\vec{E}, \vec{F})$ \emph{meets} a bi-family $\Hc$ if there is some $n,m \in \NN$ such that $(E_n, F_{n,m}) \in \Hc$.
    A bi-family $\Hc$ is \emph{$2$-dimensional $C$-hyperimmune} if every $C$-computable bi-array meets $\Hc$.
\end{definition}

\begin{remark}
There exists a natural generalization of the previous definition to every dimension. In dimension 1, a 1-array is nothing but a traditional c.e.\ array. Then, if $A$ is an co-hyperimmune set, the collection $\Hc$ of all finite subsets of~$A$ is a 1-family which is 1-dimensional hyperimmune. The notion of $n$-dimensional hyperimmunity is therefore rather a generalization of the notion of co-hyperimmunity than of hyperimmunity.
\end{remark}

This variant of hyperimmunity induces a family of notions of preservation:

\begin{definition}
Fix $k \in \NN \cup \{\NN\}$.
A problem~$\Psf$ \emph{preserves $k$ 2-dimensional hyperimmunities} if for every set~$Z$,
every family of 2-dimensional $Z$-hyperimmune bi-families  $\langle \Hc_s : s < k \rangle$, and every $Z$-computable $\Psf$-instance~$X$, there exists a $\Psf$-solution~$Y$ to~$X$ such that each~$\Hc_s$ is 2-dimensional $Y \oplus Z$-hyperimmune.    
\end{definition}

The goal of this section is therefore to characterize the patterns~$p$ for which $\RT^2_2(p)$ preserves 1 2-dimensional hyperimmunity. For this, we need to define the following combinatorial property of a pattern:

\begin{definition}
A pattern $p : [\ell]^2 \to 2$ is \emph{$i$-merging} if for every non trivial 2-partition $F \sqcup G = \ell-1$
such that $F < G$, one of the following holds:
\begin{enumerate}
    \item $\exists x \in F$, $p(x, \ell-1) = 1-i$
    \item $\exists x\in G$, $p(x, \ell-1) = i$
    \item $\exists x_0, x_1 \in F\ \exists y_0, y_1 \in G$, $p(x_0, y_0) \neq p(x_1, y_1)$.
\end{enumerate}
\end{definition}

The notion of divergent pattern can be seen as being $i$-merging for some $i<2$ for the degenerate case of trivial partitions. Indeed, 
if one considers the two partitions of $\ell-1$ as $\emptyset \sqcup \ell-1$ and $\ell-1 \sqcup \emptyset$, then there must exist both $x \in \ell-1$ such that $p(x, \ell-1) = i$ and $x \in \ell-1$ such that $p(x, \ell-1) = 1-i$.
Also note that every pattern $p : [\ell]^2 \to 2$  is $i$-merging for some $i<2$, more specifically, it is $1-p(0,\ell-1)$-merging and $p(\ell-2,\ell-1)$-merging.

\begin{lemma}
    Every convergent pattern $p$ of size at least 3 is both $0$-merging and $1$-merging.
\end{lemma}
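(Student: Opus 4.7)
The plan is to directly unfold definitions. Since $p$ is convergent of size $\ell \geq 3$, fix the color $c < 2$ witnessing convergence, so that $p(x, \ell-1) = c$ for every $x < \ell-1$. I would then fix $i \in \{0, 1\}$ and an arbitrary non-trivial 2-partition $F \sqcup G = \ell-1$ with $F < G$, where non-triviality (as the paragraph following the definition makes explicit via the contrast with \qt{trivial partitions}) means that both $F$ and $G$ are non-empty.

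The argument then splits into two cases according to whether $i = c$ or $i = 1-c$. If $i = c$, then any element $x \in G$ (which exists since $G \neq \emptyset$) satisfies $p(x, \ell-1) = c = i$, so condition~(2) of the $i$-merging definition holds. If instead $i = 1-c$, then any element $x \in F$ (which exists since $F \neq \emptyset$) satisfies $p(x, \ell-1) = c = 1-i$, so condition~(1) holds. In both cases the partition $F \sqcup G$ witnesses the $i$-merging property, so $p$ is $i$-merging for each $i < 2$.

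There is no real obstacle here: the statement is essentially an immediate consequence of the definitions once one observes that \emph{both} parts of a non-trivial partition are non-empty, so conditions~(1) and~(2) of $i$-merging can be realised on either side. The hypothesis $\ell \geq 3$ is only used to guarantee that $\ell-1 \geq 2$ so that non-trivial partitions of $\ell-1$ exist at all; for $\ell = 2$ the statement would be vacuous and uninformative, which is presumably why it is excluded.
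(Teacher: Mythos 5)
Your proof is correct and follows essentially the same route as the paper: both arguments realise condition (1) or (2) of the $i$-merging definition using the convergence colour on the non-empty side $F$ or $G$ of the partition (the paper just fixes the colour to $0$ w.l.o.g.\ and names the specific witnesses $0 \in F$ and $\ell-2 \in G$, whereas you take arbitrary elements). Your closing remark about the role of $\ell \geq 3$ is also accurate.
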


\begin{proof}
    Say w.l.o.g. that $p$ is convergent such that $p(0,|p|-1)=p(|p|-2,|p|-1)=0$. Then, for all non-trivial partition $F \sqcup G$ of $\ell-1$, $0 \in F$ and as such $p$ is $1$-merging, and $|p|-2 \in G$ and as such $p$ is $0$-merging.
\end{proof}

The reciprocal of the previous lemma does not hold.
The remainder of this section is devoted to the proof of the following theorem:

\begin{theorem}\label[theorem]{thm:2dimhyper}
    Let $p$ be a pattern. $\RT_2^2(p)$ preserves one 2-dimensional hyperim\-munity if and only if $p$ contains two divergent and irreducible sub-patterns $p_0$ and $p_1$ such that $p_0$ is $0$-merging and $p_1$ is $1$-merging.
\end{theorem}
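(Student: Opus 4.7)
The plan is to mirror the two-direction structure of \Cref{omega-hyp-divergent-irreducible} from Section 4, adapting both the forcing framework and the obstruction construction to the 2-dimensional setting, with the merging properties playing the role of irreducibility in a color-sensitive way.

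\textbf{Sufficiency direction.} Suppose $p$ contains a divergent irreducible $0$-merging sub-pattern $p_0$ and a divergent irreducible $1$-merging sub-pattern $p_1$. Fix a 2-dimensional $Z$-hyperimmune bi-family $\Hc$ and a $Z$-computable coloring $f : [\NN]^2 \to 2$; work unrelativized for simplicity. I would set up the same Mathias-like notion of forcing as \Cref{def:forcing-notion}, with conditions $(\sigma, X)$ where $X$ is computably dominated, $X$ $f$-stabilizes $\sigma$ with some witness $g$, and $\sigma$ $(f,g)$-avoids $p$. Infinity of the generic set follows from the analogue of \Cref{lem:omega-hyp-generic-infinite}. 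The essential new ingredient is a forcing question tailored to preserve 2-dimensional hyperimmunity of $\Hc$: for a $\Sigma^0_1$ predicate computing candidate bi-array entries $(E_n, F_{n,m})$, I would declare $c \qvdash \exists (n,m) \in \Hc$ holds iff for every coloring $\hat g$ and every choice of outer index $n$, there is some inner index $m$ and a finite $(f, \hat g)$-avoiding $\rho \subseteq X$ with $\varphi(\sigma \cup \rho, n, m)$ placing $(E_n, F_{n,m})$ into $\Hc$.

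\textbf{Role of the two merging properties.} The combinatorial heart of the proof is a 2-dimensional analogue of \Cref{lem:combinatorial-extensibility}: when two $(f, g)$-avoiding sets $E < F$ are to be merged and $F$ stabilizes $E$ with witness whose values on $E$ all equal some color $i$, the $i$-merging property of $p_i$ ensures that $E \cup F$ still $(f,g)$-avoids $p_i \subseteq p$, and hence avoids $p$. The three clauses of $i$-merging are precisely what is needed to block the three ways a forbidden realization of $p_i$ could appear across the $E$-$F$ boundary with stabilizing color $i$. The two directions of the forcing question (satisfied vs.\ not satisfied) require stabilizing witnesses of colors $0$ and $1$ respectively, which is why a sub-pattern with \emph{both} merging properties simultaneously is not needed but one for each color is. With this lemma in place, the $\Sigma^0_1$-preservation (\Cref{lem:question-sigma01}) and $\Sigma^0_1$-compactness (\Cref{lem:question-compact}) arguments, together with the computably dominated basis theorem, go through almost verbatim, yielding the preservation lemma analogous to \Cref{lem:preservation-hyperimmunity} but for 2-dimensional hyperimmunity of $\Hc$.

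\textbf{Necessity direction.} For the contrapositive, suppose $p$ has no divergent irreducible $i$-merging sub-pattern, for some fixed $i < 2$. I would construct a $\Delta^0_2$ approximation $f$ of an appropriate bi-hyperimmune set, and a 2-dimensional $\emptyset$-hyperimmune bi-family $\Hc$ encoded from the approximation, such that for every infinite $H$ that $f$-avoids $p$, one can $H$-computably enumerate a bi-array meeting $\Hc$, thereby defeating 2-dimensional $H$-hyperimmune\-ness. The construction refines \Cref{thm:sub-pattern-convergent-reducible-strongly-appears}: strong $f$-realization is strengthened to a 2-dimensional variant that produces pairs $(E_n, F_{n,m})$ of blocks in $H$, with $F_{n,m}$ stacking onto $E_n$ in a way that the failure of $i$-merging forces the resulting configuration to land inside $\Hc$. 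The induction on $|p|$ decomposes $p$ according to the reducibility or convergence of its sub-patterns exactly as in Section 4, with the extra bookkeeping needed to track the outer/inner structure of the bi-family.

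\textbf{Main obstacle.} The principal technical difficulty is the correct design of the two-level forcing question and the statement of the generalized merging lemma. Unlike the one-dimensional case where a single existential witness suffices and $\Sigma^0_1$-compactness extracts a single bound, here the interplay between the outer index $n$ (the one that selects a first-level block $E_n$) and the inner index $m$ (which depends on $n$) requires a question quantified differently over the two indices. Getting this right so that both merging properties can be invoked, depending on whether the bi-array is blocked at the outer or inner level, while keeping the forcing question $\Sigma^0_1$ in $X$ and compact, is the crux that makes exactly the "$0$-merging \emph{and} $1$-merging divergent irreducible sub-patterns" condition both sufficient and necessary.
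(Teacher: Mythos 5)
Your necessity direction is broadly the right shape (a stable coloring whose limit structure encodes a 2-dimensional hyperimmune bi-family, plus an induction refining \Cref{thm:sub-pattern-convergent-reducible-strongly-appears}), but your sufficiency direction has a genuine gap. You propose a \emph{single-set} Mathias forcing, reusing the conditions of \Cref{def:forcing-notion} (one stem $\sigma$ that $(f,g)$-avoids the pattern, computably dominated reservoir) together with one forcing question, and you claim the two merging properties are invoked according to whether the question is answered positively or negatively. That is not where the color constraint comes from. The constraint arises inside the merging step: when the (disjunctive) question fails, one extracts a coloring $h$ from a $\Pi^0_1$ class of negative witnesses and must thin the reservoir to an $h$-homogeneous set $Y$, and the color of homogeneity of $Y$ is \emph{not under your control}. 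An $i$-merging pattern can only absorb a reservoir homogeneous for color $1-i$ (this is exactly the hypothesis of the analogue of \Cref{lem:combinatorial-extensibility} needed here, \Cref{lem:combinatorial-merging-for-color}). A single generic is committed, through its stem, to $(f,\cdot)$-avoidance of one fixed sub-pattern (or of $p$ itself, for which the extensibility lemma fails when $p$ is reducible — recall the remark after \Cref{thm:divergent-extensible-preservation-hyperimmunities} that avoidance of $p$ is too weak an invariant), so the construction gets stuck whenever the reservoir comes out homogeneous for the wrong color, and you cannot switch sub-patterns mid-construction. This is precisely why the paper's proof (\Cref{thm:disjunctive-2-dim-hyp-preserves}) runs a \emph{disjunctive} forcing with conditions $(\sigma_0,\sigma_1,X)$ building two generics $G_0, G_1$, with $G_i$ avoiding $p_i$, three forcing questions ($\qvdash_0$, $\qvdash_1$ and a disjunctive $\qvdash$), and a merging lemma (\Cref{lem:merging}) whose conclusion is only \qt{for some side $i$}; the preservation statement is then met because both $G_0$ and $G_1$ avoid $p$ while $\Hc$ stays 2-dimensionally hyperimmune relative to at least one of them. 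It is no accident that the single-set (non-disjunctive) argument is reserved for patterns with a sub-pattern that is simultaneously $0$- and $1$-merging, where it yields the strictly stronger preservation of $\omega$ 2-dimensional hyperimmunities (\Cref{sect:omega-2dim-hyp}); indeed $\HEM$ shows the one-hyperimmunity case genuinely requires the disjunction (\Cref{prop:hem-preserves,prop:hem-does-not-preserve}).

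A secondary but substantive mismatch: 2-dimensional hyperimmunity is not preserved by a domination argument, so keeping \qt{computably dominated} reservoirs and appealing to the computably dominated basis theorem and an analogue of \Cref{lem:preservation-hyperimmunity} is the wrong mechanism. The paper's conditions instead require $\Hc$ to be 2-dimensional $X$-hyperimmune, members of the $\Pi^0_1$ classes are chosen using the fact that $\WKL$ preserves 2-dimensional hyperimmunity (Liu--Patey), and the diagonalization (\Cref{lem:generic-hyperimmune}) builds an $X$-computable bi-array $\psi(n),\psi(n;m)$ out of the forcing questions and applies 2-dimensional hyperimmunity of $\Hc$ relative to $X$ to land $(\psi(n),\psi(n;m))$ in $\Hc$; your sketch of the two-level question quantified \qt{for every outer index $n$, some inner $m$} does not produce this bi-array structure. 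To repair the proof you would need to adopt the disjunctive framework and the $\Hc$-hyperimmune-reservoir invariant rather than adapt \Cref{def:forcing-notion} and \Cref{lem:preservation-hyperimmunity}.
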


Note that $p_0$ and $p_1$ are not necessarily distinct, in which case we shall see in the next section that $\RT^2_2(p)$ preserves $\omega$ 2-dimensional hyperimmunities. As for \Cref{omega-hyp-divergent-irreducible}, the proof of \Cref{thm:2dimhyper} is divided into \Cref{thm:disjunctive-2-dim-hyp-preserves} and \Cref{thm:2-dim-reciprocal}.

\begin{theorem}\label[theorem]{thm:disjunctive-2-dim-hyp-preserves}
Let $p_0$ and $p_1$ be irreducible, divergent, and respectively $0$-merging and $1$-merging patterns, and let $\Hc$ be a 2-dimensional hyperimmune bi-family.
For every computable coloring $f : [\NN]^2 \to 2$, there is an infinite set~$H$ $f$-avoiding $p_i$ for some~$i < 2$ and such that $\Hc$ is 2-dimensional $H$-hyperimmune.
\end{theorem}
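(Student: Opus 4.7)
The plan is to adapt the Mathias forcing of Theorem~\ref{thm:divergent-extensible-preservation-hyperimmunities} into a two-sided, disjunctive variant. A \emph{condition} will be a tuple $c = (\sigma_0, \sigma_1, g, X)$, where $\sigma_0, \sigma_1$ are finite sets with $\sigma_0 \cup \sigma_1 < X$, the reservoir $X$ is computably dominated and $f$-stabilizes $\sigma_0 \cup \sigma_1$ with witness $g : \sigma_0 \cup \sigma_1 \to 2$, and each $\sigma_i$ $(f,g)$-avoids $p_i$ (such conditions being closed under componentwise Mathias extension by divergence and irreducibility of each $p_i$ through Lemma~\ref{lem:combinatorial-extensibility}). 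A sufficiently generic filter yields two sets $H_0, H_1$ with $H_i$ $f$-avoiding $p_i$. A density argument in the style of Lemma~\ref{lem:omega-hyp-generic-infinite}, applied alternately on the two sides, guarantees that $H_0 \cup H_1$ is infinite, so at least one $H_i$ is infinite; we then take $H$ to be that side.

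For the preservation part, for every Turing index $e$ and every side $i<2$, one must meet the requirement that either $\Phi^{H_i}_e$ fails to compute a bi-array or the bi-array it computes meets $\Hc$. Following the template of Lemmas~\ref{lem:question-extension}, \ref{lem:question-sigma01}, and \ref{lem:question-compact}, I would introduce a forcing question $c \qvdash_i \varphi$ that quantifies universally over $2$-colorings $\hat g : \NN \to 2$ extending $g$ on $X$ and existentially over Mathias extensions $(\rho_0, \rho_1)$ with each $\rho_i \cup \sigma_i$ $(f,\hat g)$-avoiding $p_i$ and witnessing $\varphi$. A positive answer yields a forcing extension, using the computably dominated basis theorem to select an appropriate $\hat g$ and then thinning the reservoir accordingly; a negative answer should yield an extension forcing the negation. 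Meeting these requirements on \emph{both} sides in advance of choosing which side is infinite guarantees that $\Hc$ is 2-dimensional $H$-hyperimmune for whichever side is ultimately selected.

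The main obstacle is the negative case: if $c \nqvdash_i \varphi_e$ for the $\Sigma^0_1$-statement asserting that $\Phi^{H_i}_e$ enumerates a pair $(E_n, F_{n,m}) \in \Hc$, one must produce a Mathias extension forcing $\Phi^{H_i}_e$ to be non-total, non-bi-array, or to meet $\Hc$. Here is where the merging hypotheses are essential: the three clauses defining $i$-merging precisely enumerate the obstructions that prevent a fresh block of candidate elements from being $(f,\hat g)$-safely appended to $\sigma_i$ for \emph{some} choice of $\hat g$. Via a compactness argument analogous to Lemma~\ref{lem:question-compact}, together with the computably dominated basis theorem applied to the $\Pi^0_1$ class of $\hat g$ witnessing the failure of the forcing question, one reads off an $X$-computable bi-array $(\vec{E}, \vec{F})$ whose members uniformly avoid $\Hc$: on the side $i=0$, the $0$-merging clauses of $p_0$ control which outer blocks $E_n$ are selectable; on the side $i=1$, the $1$-merging clauses of $p_1$ control which inner blocks $F_{n,m}$ are selectable. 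The resulting bi-array contradicts the 2-dimensional $X$-hyperimmunity of $\Hc$, forcing the sought-after extension to exist. The delicate bookkeeping needed to align the three merging clauses on each side with the two-level structure of a bi-array, while keeping all extensions $(f,\hat g)$-safe via Lemma~\ref{lem:combinatorial-extensibility}, is the main technical difficulty.
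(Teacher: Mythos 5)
There is a genuine structural gap. Your plan is to meet, \emph{on both sides}, the requirement that $\Phi_e^{H_i}$ is either not a bi-array or meets $\Hc$, ``in advance of choosing which side is infinite.'' That is too strong, and it is precisely what the combinatorics does not allow: when the per-side question $\qvdash_i$ is handled, the side on which one can force the negation of a diagonalization formula is \emph{not} chosen in advance. In the paper's argument one fixes a coloring $h$ witnessing the failure of a \emph{disjunctive} forcing question (\Cref{def:disjunctive-question}) and thins the reservoir to an $h$-homogeneous set $Y$; the color of $Y$ determines which side $i$ receives the extension, and the $i$-merging property of $p_i$ (via \Cref{lem:combinatorial-merging-for-color}) is what lets the block $\rho$ coming from the positive answer to $\qvdash_i$ be merged with all future extensions drawn from $Y$ while keeping $(f,h)$-avoidance, so that $h$ continues to certify $\neg\psi_i$. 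Consequently the merging lemma (\Cref{lem:merging}) only concludes ``for some $i<2$,'' the requirements must be \emph{paired} --- for each pair of indices $(e_0,e_1)$ one forces the disjunction of the two requirements (\Cref{lem:generic-hyperimmune}) --- and only at the end does one conclude that all requirements hold on at least one side. Your architecture, with a per-side question whose negative answer is supposed to yield an extension forcing the negation on that same predetermined side, would amount to preserving the hyperimmunity on a side fixed beforehand; this fails in general (indeed the need for two different merging colors, and statements like \Cref{prop:hem-does-not-preserve}, show the disjunction cannot be eliminated). Relatedly, your description of how the merging clauses ``control which outer/inner blocks are selectable'' to read off a bad bi-array does not correspond to a workable step: in the actual proof the bi-array is assembled from \emph{positive} answers to the ($\Sigma^0_1(X)$) questions, and 2-dimensional $X$-hyperimmunity of $\Hc$ is invoked only in the totality case to find $(n,m)$ with $(\psi(n),\psi(n;m))\in\Hc$, after which one forces the computations to land inside those sets; negative answers are used to force partiality, not to build an array avoiding $\Hc$.

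A second, more local error is the reservoir invariant. You keep ``$X$ computably dominated'' and propose the computably dominated basis theorem to pick $\hat g$; but computable domination is the invariant for preserving hyperimmune \emph{functions} (\Cref{def:forcing-notion}), and it gives nothing about bi-arrays meeting $\Hc$. Here the condition must carry ``$\Hc$ is 2-dimensional $X$-hyperimmune,'' and when selecting a member of the $\Pi^0_1$ class of witnesses of a negative answer one must use the fact that $\WKL$ preserves 2-dimensional hyperimmunity (Liu and Patey~\cite[Corollary 2.7]{liu2022reverse}), so that $\Hc$ remains 2-dimensional hyperimmune relative to $h\oplus X$; otherwise the key step --- that the $X$-computable bi-array of forced values meets $\Hc$ --- is unavailable. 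A smaller point: the per-side question needs to quantify over \emph{pairs} of colorings (one playing the role of the stabilizing witness, one the role of the $\Pi^0_1$-class witness, cf.\ \Cref{def:question-i}), and both generic sets can in fact be kept infinite simultaneously (\Cref{lem:generic-infinite}), so there is no need for your ``at least one side is infinite'' fallback.
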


The proof of \Cref{thm:disjunctive-2-dim-hyp-preserves} is done using a disjunctive variant of Mathias forcing. Before defining the notion of forcing, we prove a technical lemma which gives a sufficient condition to take two sets $E$ and $F$, and obtain a union $E \cup F$ which $(f, g)$-avoids~$p$. In some sense, \Cref{lem:combinatorial-merging-for-color} is similar to \Cref{lem:combinatorial-extensibility}. The main difference is that $g$ is not assumed to witness the fact that $F$ $f$-stabilizes $E$. Because of this, 
$p$, $E$ and $F$ must satisfy stronger hypothesis.

\begin{lemma}\label[lemma]{lem:combinatorial-merging-for-color}
Let $p$ be a pattern divergent and $i$-merging for some color~$i < 2$. Fix two colorings $f : [\NN]^2 \to 2$ and $g : \NN \to 2$.
Let $E < F$ be two sets such that 
\begin{itemize}
    \item[(1)] $E$ is $g$-homogeneous; $F$ is $g$-homogeneous for color~$1-i$;
    \item[(2)] For every $x_0, x_1 \in E$, for every $y_0, y_1 \in F$, $f(x_0, y_0) = f(x_1, y_1)$;
    \item[(3)] $E \cup F$ $f$-avoids~$p$.
\end{itemize}
Then $E \cup F$ $(f, g)$-avoids~$p$.
\end{lemma}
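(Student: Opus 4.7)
The plan is to observe that hypothesis (3) already delivers the $f$-avoidance half of $(f,g)$-avoidance, so only the second clause needs argument: for every $H = \{x_0 < \dots < x_{\ell-2}\} \subseteq E \cup F$ which $f$-realizes $p^-$, one must exhibit some $j < \ell-1$ with $g(x_j) \neq p(j, \ell-1)$. I would fix such an $H$ and set $A = \{j : x_j \in E\}$ and $B = \{j : x_j \in F\}$, obtaining a partition $A \sqcup B = \ell-1$ with $A < B$ since $E < F$. Let $c < 2$ denote the color witnessing $g$-homogeneity of $E$.

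I then split on whether $A$ and $B$ are empty, and on the value of $c$. If $A = \emptyset$, then $H \subseteq F$ is $g$-homogeneous for color $1-i$, and divergence of $p$ supplies some $j$ with $p(j, \ell-1) = i \neq g(x_j)$. If $B = \emptyset$, then $H \subseteq E$ is $g$-homogeneous for color $c$, and divergence gives $j$ with $p(j, \ell-1) = 1-c \neq g(x_j)$. If both are nonempty but $c = 1-i$, then $E \cup F$ is uniformly $g$-colored $1-i$, and divergence once more yields the required $j$.

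The substantive case is therefore $A, B$ both nonempty and $c = i$. Here I would invoke the $i$-merging property on the nontrivial partition $A \sqcup B$. Clause (1) produces some $j \in A$ with $p(j, \ell-1) = 1-i$ and $g(x_j) = i$, so the two colors differ. Clause (2) similarly gives $j \in B$ with $p(j, \ell-1) = i$ and $g(x_j) = 1-i$. Clause (3) is the only delicate one: it would produce indices $j_0, j_1 \in A$ and $k_0, k_1 \in B$ with $p(j_0, k_0) \neq p(j_1, k_1)$, but since $H$ $f$-realizes $p^-$ one has $f(x_{j_0}, x_{k_0}) = p(j_0, k_0)$ and $f(x_{j_1}, x_{k_1}) = p(j_1, k_1)$, contradicting hypothesis (2) that all cross-edges between $E$ and $F$ carry the same $f$-value. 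Hence clause (3) cannot occur, and one of the first two clauses must supply the desired index.

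The main obstacle is the bookkeeping around the unspecified homogeneity color $c$ of $E$: the $i$-merging hypothesis is tailored to the case $c = i$, while the orthogonal case $c = 1-i$ must be dispatched separately via divergence. Once this case split is recognized, the three clauses of the $i$-merging definition align exactly with the three types of witness needed, with hypothesis (2) serving precisely to rule out clause (3).
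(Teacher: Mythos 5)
Your proof is correct and follows essentially the same route as the paper: dispense with the degenerate cases (the realizing set $H$ meeting only one of $E$, $F$, or $E$ being $g$-homogeneous for color $1-i$) via divergence, then apply the $i$-merging property to the non-trivial partition of indices induced by $E$ and $F$, with clauses (1) and (2) yielding the witness against $g$ and clause (3) excluded by hypothesis (2) together with $H$ $f$-realizing $p^-$. If anything, your explicit treatment of the cases $A = \emptyset$ or $B = \emptyset$ is slightly more careful than the paper's, which folds these trivial-partition cases into its earlier remark that divergence is the degenerate form of merging.
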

\begin{proof}

Let $\ell = |p|$, and let $H = \{ x_0, \dots, x_{\ell-2} \}\subseteq E \cup F$ $f$-realize $p^-$.
We have two cases:

Case 1: $H$ is $g$-homogeneous. Since $p$ is divergent, there is some~$i$ such that $p(i, \ell-1) \neq g(x_i)$. Since by (3) $E \cup F$ $f$-avoids $p$, then $E \cup F$ $(f, g)$-avoids~$p$.

Case 2: $H$ is not $g$-homogeneous. Then in particular $E$ is $g$-homogeneous for color~$i$.
Since $p$ is $i$-merging , one of the following holds:

\begin{itemize}
    \item $\exists x_i \in E \cap H$ such that $p(i, \ell-1) = 1-i$. Since $E$ is $g$-homogeneous for color~$i$, $p(i, \ell-1) \neq g(x_i)$.
    \item $\exists x_i \in F \cap H$ such that $p(i, \ell-1) = i$. Since $F$ is $g$-homogeneous for color~$1-i$, $p(i, \ell-1) \neq g(x_i)$.
    \item $\exists x_0, x_1 \in E \cap H$, $\exists y_0, y_1 \in F \cap H$ such that $p(x_0, y_0) \neq p(x_1, y_1)$.
    This third case contradicts item (2) in the hypothesis, so this does not happen.
\end{itemize}
Since for every $H  = \{ x_0, \dots, x_{\ell-2} \} \subseteq E \cup F$ $f$-realizing $p^-$, there is some~$i$ such that $p(i, \ell-1) \neq g(x_i)$ and $E \cup F$ $f$-avoids~$p$, then $E \cup F$ $(f, g)$-avoids~$p$.
\end{proof}

We shall again prove \Cref{thm:disjunctive-2-dim-hyp-preserves} in an unrelativized form.
In what follows, fix a 2-dimensional hyperimmune bi-family $\Hc$, and a computable coloring $f : [\NN]^2 \to 2$.
We shall construct two infinite sets $G_0, G_1$, such that
$G_i$ $f$-avoids $p_i$ for each~$i < 2$ and $\Hc$ is 2-dimensional hyperimmune relative to either $G_0$ or $G_1$.

\begin{definition}
A \emph{condition} is a 3-tuple $(\sigma_0,\sigma_1,X)$ where, for both $i<2$, \begin{itemize}
    \item $(\sigma_i,X)$ is a Mathias condition;
    \item $X$ $f$-stabilizes~$[0, \max(\sigma_0, \sigma_1)]$ with some witness $g : [0, \max(\sigma_0, \sigma_1)] \to 2$;
    \item $\sigma_i$ $(f, g)$-avoids~$p_i$;
    \item $\Hc$ is 2-dimensional $X$-hyperimmune. 
\end{itemize}
\end{definition}

The order over conditions is defined as follows : $(\sigma_0,\sigma_1,X) \leq (\tau_0,\tau_1,Y)$ if $Y \subseteq X$ and for every~$i < 2$, $\sigma_i \preceq \tau_i$ and $\tau_i \setminus \sigma_i \subseteq X$. Every sufficiently generic filter~$\F$ induces two sets $G_{\F, 0}$ and $G_{\F, 1}$, defined by $G_{\F, i} = \bigcup_{(\tau_0,\tau_1,Y) \in \F}\tau_i$.

Given an arithmetic formula $\varphi$, we write $c \Vdash \Phi(G_i)$ if for every sufficiently generic filter $\F$ containing $c$, $\varphi(G_{\F, i})$ holds. The following lemma states that if $\F$ is a sufficiently generic filter, then $G_{\F,0}$ and $G_{\F, 1}$ are both infinite.

\begin{lemma}\label[lemma]{lem:generic-infinite}
    Let $c = (\sigma_0,\sigma_1,X)$ be a condition and $x_0,x_1 \in X$. There exists $Y \subseteq X$ such that $(\sigma_0 \cup \{x_0\}, \sigma_1 \cup \{x_1\}, Y)$ is an extension of~$c$. 
\end{lemma}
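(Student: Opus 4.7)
The plan mirrors the proof of Lemma \ref{lem:omega-hyp-generic-infinite}, now handling both components in parallel. Let $x = \max(x_0, x_1)$. By iterating the pigeonhole principle finitely many times, I would thin $X$ to an infinite $X$-computable subset $Y \subseteq X \cap (x, \infty)$ on which $f(z, \cdot)$ is constant for every $z \in [0, x]$. This yields a witness $g' \supseteq g$ defined on $[0, x]$, and by construction $Y$ $f$-stabilizes the merged initial segment $[0, \max(\sigma_0 \cup \{x_0\}, \sigma_1 \cup \{x_1\})]$ with witness $g'$.

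Next I would verify that for each $i < 2$, the set $\sigma_i \cup \{x_i\}$ still $(f, g')$-avoids $p_i$. Since $p_i$ is divergent and irreducible, $|p_i| \geq 3$, so the singleton $\{x_i\}$ is too small to $f$-realize either $p_i$ or $p_i^-$ and hence vacuously $(f, g')$-avoids $p_i$. Moreover, $x_i \in X$ and $X$ $f$-stabilizes $\sigma_i$ with witness $g$, so $\{x_i\}$ $f$-stabilizes $\sigma_i$ with witness $g'$, which agrees with $g$ on $\sigma_i$; in particular $\sigma_i$ still $(f, g')$-avoids $p_i$. Applying Lemma~\ref{lem:combinatorial-extensibility} with $E = \sigma_i$ and $F = \{x_i\}$ (which is where irreducibility of $p_i$ is used) then gives $\sigma_i \cup \{x_i\}$ $(f, g')$-avoids $p_i$.

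Finally, since $Y \leq_T X$, every $Y$-computable bi-array is $X$-computable and therefore meets $\Hc$ by assumption; hence $\Hc$ remains 2-dimensional $Y$-hyperimmune. Combining these observations, $(\sigma_0 \cup \{x_0\}, \sigma_1 \cup \{x_1\}, Y)$ satisfies all the clauses of a forcing condition and is a Mathias extension of $c$. There is no real obstacle here: the substantive combinatorics has been packaged into Lemma~\ref{lem:combinatorial-extensibility}, and everything else is routine bookkeeping about finite stabilization, Mathias reservoirs, and Turing-reduction closure of 2-dimensional hyperimmunity.
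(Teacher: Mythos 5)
Your proposal is correct and follows essentially the same route as the paper's own proof: shrink the reservoir to $Y = X \cap (\max(x_0,x_1), \infty)$, extend the stabilization witness, note that the singleton $\{x_i\}$ vacuously $(f,g')$-avoids the divergent pattern $p_i$ (so $|p_i| \geq 3$), invoke \Cref{lem:combinatorial-extensibility}, and observe that 2-dimensional hyperimmunity of $\Hc$ passes to $Y$ since $Y \leq_T X$. Your explicit remark that $g'$ agrees with $g$ on $\sigma_i$ (so that $\{x_i\}$ $f$-stabilizes $\sigma_i$ with witness $g'$) is a detail the paper leaves implicit, but otherwise the arguments coincide.
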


\begin{proof}
    Let $Y =  X \cap (\max(x_0, x_1), \infty)$ be an infinite $X$-computable subset of $X$ which $f$-stabilizes $[0, \max(x_0, x_1)]$ with some witness~$\hat g$. In particular, $\Hc$ is still 2-dimensional hyperimmune relative to $Y$. We now prove that for both $i$, $\sigma_i \cup \{ x_i \}$ also $(f,\hat g)$-avoids $p_i$. Note that $\{ x_i \}$ $f$-stabilizes $\sigma_i$ with witness $\hat g$ and that since $p_i$ is divergent, $|p_i|>2$ and thus $\{x_i\}$ $(f,\hat g)$-avoids $p_i$. By \Cref{lem:combinatorial-extensibility}, $\sigma_i \cup \{ x_i \}$ does $(f,\hat g)$-avoid $p_i$. The condition $(\sigma_0 \cup \{ x_0 \},\sigma_1 \cup \{ x_1 \} ,Y)$ is the desired extension of~$c$.
\end{proof}

We are now going to define two kind of forcing questions for $\Sigma^0_1$-formulas: a non-disjunctive one $\qvdash_i$ for each side~$i < 2$ (\Cref{def:question-i}) and a disjunctive one $\qvdash$ (\Cref{def:disjunctive-question}). Both kinds will be shown to be $\Sigma^0_1$-preserving and $\Sigma^0_1$-compact. They additionally satisfy a merging lemma (\Cref{lem:merging}) enabling to prove the diagonalization lemma (\Cref{lem:generic-hyperimmune}).

\begin{definition}\label[definition]{def:question-i}
Let $\varphi$ be a $\Sigma^0_1$-formula, $c = (\sigma_0, \sigma_1, X)$ be a condition and $i < 2$. We define the forcing question $\qvdash_i$ as follows :
    $c \qvdash_i \varphi(G)$ if for all pairs of 2-colorings $h_0 : \N \to 2$ and $h_1 : \N \to 2$, there exists a finite $h_0$-homogeneous and $h_1$-homogeneous set $\rho \subseteq X$ which $(f,h_0)$-avoids $p_i$ such that $\varphi(\sigma_i \cup \rho)$ holds.
\end{definition}

\begin{remark}
    In the previous definition, we asked for $\rho$ to be $h_0$-homogeneous and to $(f,h_0)$-avoid $p_i$. One could note that since $p_i$ is asked to be divergent, being $h_0$-homogeneous almost already yields $(f,h_0)$-avoiding $p_i$, as long as $\rho$ $f$-avoids $p_i$. This means this definition could be a little bit weaker. However, we will mostly use the fact $\rho$ $(f,h_0)$-avoids $p_i$, so it simplifies argumentation to directly ask for it in the definition.
\end{remark}

\Cref{def:question-i} satisfies the specification of a forcing question, with a proof similar to that of \Cref{lem:question-extension}.
We shall actually prove a stronger version of the specifications through \Cref{lem:merging}.
For now, we prove that the forcing question is $\Sigma^0_1$-preserving.

\begin{lemma}\label[lemma]{lem:question-i-preserving}
    Let $c =(\sigma_0,\sigma_1,X)$ be a condition and $\varphi$ be a $\Sigma_1^0$ formula. The sentence \qt{$c \qvdash_i \varphi(G)$} is $\Sigma_1^0(X)$ for both $i<2$.
\end{lemma}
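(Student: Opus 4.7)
The plan is to mirror the compactness argument used in \Cref{lem:question-sigma01}, simply accommodating the fact that \Cref{def:question-i} quantifies over two 2-colorings $h_0, h_1$ instead of a single one. A priori, the statement \qt{$c \qvdash_i \varphi(G)$} reads as a $\Pi^0_2$ sentence, since it begins with \qt{for all pairs $(h_0, h_1) : \NN \times \NN \to 2$}. The goal is to argue that the truth of this sentence only depends on sufficiently long initial segments of $h_0$ and $h_1$, so that the universal quantification collapses to a finite one.

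First, I would observe that whether a finite set $\rho$ is $h_j$-homogeneous only depends on $h_j \uh_{\max \rho + 1}$, and whether $\rho$ $(f,h_0)$-avoids $p_i$ only depends on $h_0 \uh_{\max \rho + 1}$ together with the (computable) restriction of $f$ to pairs from $[0, \max \rho + 1]$. Likewise, whether $\varphi(\sigma_i \cup \rho)$ holds is $\Sigma^0_1$ and independent of $h_0, h_1$. Thus the witness $\rho$ is insensitive to the tails of $h_0, h_1$.

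Next, I would run a standard König's lemma argument. Consider the binary tree $T \subseteq (2 \times 2)^{<\NN}$ consisting of all pairs $(h_0 \uh_n, h_1 \uh_n)$ for which no finite $\rho \subseteq X \cap [0,n]$ is simultaneously $h_0$-homogeneous, $h_1$-homogeneous, $(f, h_0)$-avoids $p_i$, and satisfies $\varphi(\sigma_i \cup \rho)$. If $T$ were infinite, it would have an infinite path, producing a pair $(h_0, h_1)$ witnessing $c \nqvdash_i \varphi(G)$. Contrapositively, if $c \qvdash_i \varphi(G)$ then $T$ is finite, hence there exists some $n \in \NN$ such that for every pair of finite 2-colorings $h_0, h_1 : [0,n] \to 2$, a suitable $\rho \subseteq X \cap [0, n]$ exists.

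Consequently, \qt{$c \qvdash_i \varphi(G)$} is equivalent to the sentence \qt{there exists $n \in \NN$ such that for every pair of 2-colorings $h_0, h_1 : [0, n] \to 2$, there exists a finite $\rho \subseteq X \cap [0, n]$ which is $h_0$-homogeneous and $h_1$-homogeneous, $(f, h_0)$-avoids $p_i$, and such that $\varphi(\sigma_i \cup \rho)$ holds.} The outer existential over $n$ is $\Sigma^0_1$, the inner universal over $(h_0, h_1)$ is bounded and finite, membership in $X \cap [0, n]$ is $X$-computable, the avoidance and homogeneity checks are computable in $f$ and the $h_j$, and $\varphi$ itself is $\Sigma^0_1$. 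The composition is therefore $\Sigma^0_1(X)$, as desired. No step here is more delicate than the others; the only thing to be careful about is to notice that the finite quantifier over 2-partitions is harmless because it sits under a $\Sigma^0_1$ quantifier (existence of $n$) and above a computable matrix together with a $\Sigma^0_1$ predicate $\varphi$.
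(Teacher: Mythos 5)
Your proof is correct and follows essentially the same route as the paper: the paper's proof simply invokes \qt{a compactness argument} to reduce the universal quantification over pairs $(h_0,h_1)$ to a bounded one over colorings of an initial segment $[0,n]$, which is exactly what your K\"onig's lemma tree makes explicit. The only cosmetic difference is that the paper also bounds the existential witness of $\varphi$ by the same threshold (useful later for $\Sigma^0_1$-compactness), whereas you leave $\varphi$ unbounded, which is harmless here since a finite conjunction of $\Sigma^0_1(X)$ statements is still $\Sigma^0_1(X)$.
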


\begin{proof}
   Let $\varphi \equiv \exists x \psi(G, x)$ be a $\Sigma_1^0$ condition. By a compactness argument, $c \qvdash_i \varphi(G)$ holds if and only if there exists $t \in \NN$ such that for all 2-colorings $h_0, h_1 : t \to 2$, there exists $\rho \subseteq X \cap \{0, \dots, t \}$ and $x < t$ such that $\rho$ is $h_0$-homogeneous, $h_1$-homogeneous, $(f,h_0)$-avoids $p_i$ and such that $\psi(\sigma_i \cup \rho, x)$ holds.
\end{proof}

We now define a notion of $\Sigma^0_1$-compactness for forcing question. Recall that $\Sigma^0_1$-compactness of a forcing question means that if a condition answers positively to the question for a $\Sigma_1^0$ formula, then we can actually bound the first existential quantifier. 

\begin{lemma}\label[lemma]{lem:question-i-compact}
    Fix $i < 2$ and a condition $c$. For every $\Delta^0_0$-formula $\varphi(G, x)$, if $c \qvdash_i \exists x \varphi(G,x)$, then there exists $t \in \NN$ such that $c \qvdash_i \exists x \leq t \ \varphi(G,x)$.
\end{lemma}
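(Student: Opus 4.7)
The plan is to mimic the strategy used in Lemma \ref{lem:question-compact} for the non-disjunctive forcing question $\qvdash$. The key ingredient is the compactness argument that underlies the $\Sigma^0_1(X)$ complexity of $\qvdash_i$ established in the proof of Lemma \ref{lem:question-i-preserving}: the statement $c \qvdash_i \exists x\, \varphi(G,x)$ is equivalent to asserting that there is some $n \in \NN$ such that for every pair of 2-colorings $h_0, h_1 : [0, n] \to 2$, there exist a witness $x_{h_0, h_1} \in \NN$ and a finite set $\rho \subseteq X \cap [0, n]$ which is $h_0$-homogeneous, $h_1$-homogeneous, $(f, h_0)$-avoids $p_i$, and satisfies $\varphi(\sigma_i \cup \rho,\, x_{h_0, h_1})$.

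Once $n$ is fixed, there are only finitely many pairs of 2-colorings $h_0, h_1 : [0, n] \to 2$, so we may set $t = \max\{ x_{h_0, h_1} : h_0, h_1 : [0, n] \to 2 \}$. For any pair of 2-colorings $h_0, h_1 : \NN \to 2$, applying the compact form to their restrictions to $[0, n]$ produces a finite set $\rho \subseteq X$ and a witness $x \leq t$ with all the desired homogeneity, avoidance, and satisfaction properties. This is precisely the statement $c \qvdash_i \exists x \leq t\, \varphi(G, x)$.

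No genuine obstacle is expected: the argument transfers from the non-disjunctive case essentially unchanged, the only overhead being the bookkeeping of two colorings $h_0, h_1$ in place of the single coloring $\hat g$. The one point worth noting is that the finite set $\rho$ extracted from the compact form already lies in $X \cap [0, n]$, so its $h_0$- and $h_1$-homogeneity, as well as its $(f, h_0)$-avoidance of $p_i$, depend only on $h_0 \uh [0, n]$ and $h_1 \uh [0, n]$; hence restricting the two colorings to $[0, n]$ is enough to trigger the compact form from any total pair $h_0, h_1 : \NN \to 2$.
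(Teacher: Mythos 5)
Your proof is correct and follows essentially the same route as the paper: both rely on the compactness argument behind \Cref{lem:question-i-preserving} to reduce to finitely many pairs of colorings of an initial segment and then bound the existential witness, exactly as in the paper's proof of \Cref{lem:question-compact}. The paper's own proof merely cites the bound already built into the equivalence from \Cref{lem:question-i-preserving} (where $x < t$ appears directly), whereas you re-extract it by taking a maximum over the finitely many restricted colorings; the two are interchangeable.
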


\begin{proof}
Let $t$ be the bound given by the proof of \Cref{lem:question-i-preserving}. Then $c \qvdash_i \exists x \leq t \ \varphi(G,x)$ holds.
\end{proof}

We now define the disjunctive forcing question for pairs of $\Sigma^0_1$-formulas.

\begin{definition}\label[definition]{def:disjunctive-question}
    Let $\varphi_0(G)$ and $\varphi_1(G)$ be two $\Sigma^0_1$-formulas and $c = (\sigma, X)$ be a condition. Let $c \qvdash \varphi_0(G) \vee \varphi_1(G)$ if for all 2-coloring $h : \N \to 2$, there exists a side $i < 2$ and a finite set $\rho \subseteq X$ which $(f,h)$-avoids $p_i$ such that $\varphi_i(\sigma_i \cup \rho)$ holds.
\end{definition}

Again, we prove that the disjunctive forcing question is $\Sigma^0_1$-preserving and $\Sigma^0_1$-compact.

\begin{lemma}\label[lemma]{lem:question-preserving}
    Let $c =(\sigma_0,\sigma_1,X)$ be a condition and $\varphi_0(G)$ and $\varphi_1(G)$ be two $\Sigma^0_1$-formulas. The sentence \qt{$c \qvdash \varphi_0(G) \vee \varphi_1(G)$} is $\Sigma^0_1(X)$.
\end{lemma}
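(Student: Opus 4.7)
The plan is to prove this lemma by a compactness argument that mirrors the one used in \Cref{lem:question-i-preserving}. Write $\varphi_0(G) \equiv \exists x\, \psi_0(G,x)$ and $\varphi_1(G) \equiv \exists x\, \psi_1(G,x)$ with $\psi_0, \psi_1$ being $\Delta^0_0$-formulas. The claim is that $c \qvdash \varphi_0(G) \vee \varphi_1(G)$ is equivalent to the following $\Sigma^0_1(X)$ statement:
\begin{quote}
there exists $t \in \NN$ such that for every $h : [0,t] \to 2$, there exist $i < 2$, a finite set $\rho \subseteq X \cap [0,t]$ and some $x \leq t$ such that $\rho$ $(f,h)$-avoids $p_i$ and $\psi_i(\sigma_i \cup \rho, x)$ holds.
\end{quote}
Once established, this sentence is manifestly $\Sigma^0_1(X)$ because the bounded universal quantifier over $h$ is finite, the bounded existential quantifiers over $i$, $\rho$ and $x$ are finite for each value of $t$, and the condition \textquotedblleft$\rho \subseteq X$\textquotedblright\ together with the $\Delta^0_0$-predicate $\psi_i$ are decidable with oracle $X$.

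First I would handle the easy direction: if such a $t$ exists, then given any $h : \NN \to 2$, restricting $h$ to $[0,t]$ produces a witness triple $(i,\rho,x)$. Since $\rho \subseteq [0,t]$, the property that $\rho$ $(f,h)$-avoids $p_i$ depends only on $h \uh [0,t]$ and so is preserved when passing from $h \uh [0,t]$ to $h$; likewise $\psi_i(\sigma_i \cup \rho, x)$ is independent of $h$. Hence $c \qvdash \varphi_0(G) \vee \varphi_1(G)$.

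For the forward direction, assume $c \qvdash \varphi_0(G) \vee \varphi_1(G)$. Fix $h \in 2^\NN$ and let $(i_h, \rho_h, x_h)$ be a corresponding witness (with $x_h$ a witness to the outer existential of $\psi_{i_h}$). Set $t_h = \max(\max \rho_h, x_h, \max\sigma_0, \max\sigma_1) + 1$. The collection of $h' \in 2^\NN$ that agree with $h$ on $[0,t_h]$ forms a basic clopen neighbourhood of $h$, and any such $h'$ admits the same witness $(i_h, \rho_h, x_h)$, because $(f,\cdot)$-avoidance of $p_{i_h}$ by $\rho_h$ depends only on colors at elements of $\rho_h \subseteq [0,t_h]$ and $\psi_{i_h}$ is independent of $h$. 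Compactness of $2^\NN$ yields a finite subcover, hence a uniform bound $t = \max_{h \in F} t_h$ over some finite $F$, giving the desired $t$.

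The main (mild) obstacle is just the bookkeeping to check that every ingredient of the definition --- the $(f,h)$-avoidance condition for $\rho \subseteq X$ and the truth of $\varphi_i(\sigma_i \cup \rho)$ --- depends on only finitely many bits of $h$ once $\rho$ and the witness $x$ are fixed; this is what legitimizes the compactness step. No new combinatorial input beyond what already appears in \Cref{lem:question-i-preserving} is required.
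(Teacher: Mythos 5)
Your proof is correct and follows the same route as the paper: rewrite $\varphi_i(G)$ as $\exists x\,\psi_i(G,x)$ and use compactness of $2^{\NN}$ to show the forcing question is equivalent to a statement with a uniform bound $t$ on $h$, $\rho$ and $x$, which is then visibly $\Sigma^0_1(X)$. The paper's proof simply says \qt{by a compactness argument}; your write-up just makes explicit the (correct) observation that $(f,h)$-avoidance of $p_i$ by a finite $\rho$ depends only on finitely many bits of $h$, which is exactly what licenses that step.
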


\begin{proof}
   Let $\varphi_0(G) \equiv \exists x \psi_0(G, x)$ and $\varphi_1(G) \equiv \exists x \psi_1(G, x)$ be $\Sigma_1^0$ formulas. By a compactness argument, $ c \qvdash_i \varphi_0(G_0) \lor \varphi_1(G_1)$ holds if and only if there exists $t \in \NN$ such that for all 2-coloring $h : t \to 2$, there exists $i<2$, $x < t$ and $\rho \subseteq X \cap \{0, \dots, t \}$ which $(f,h)$-avoids $p_i$ and such that $\psi_i(\sigma_i \cup \rho, x)$ holds.
\end{proof}

\begin{lemma}
    Let $c$ be a condition and $\varphi_0(G,x)$ and $\varphi_1(G,x)$ be two $\Delta^0_0$-formulas. If $c \qvdash \exists x \varphi_0(G_0,x) \lor \exists x \varphi_0(G_0,x)$, then there exists $t \in \NN$ such that $c \qvdash \exists x<t \,\varphi_0(G_0,x) \lor \exists x<t \, \varphi_0(G_0,x)$.
\end{lemma}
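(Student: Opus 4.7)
The plan is to reuse the compactness argument already carried out in the proof of Lemma~\ref{lem:question-preserving}. Write $c = (\sigma_0, \sigma_1, X)$ and $\Phi \equiv \exists x\, \varphi_0(G_0, x) \vee \exists x\, \varphi_1(G_1, x)$. Unfolding Definition~\ref{def:disjunctive-question}, $c \qvdash \Phi$ says that for every 2-coloring $h : \N \to 2$ one can find a side $i < 2$, a finite set $\rho \subseteq X$ which $(f, h)$-avoids $p_i$, and some $x \in \N$ such that $\varphi_i(\sigma_i \cup \rho, x)$ holds.

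The key step is to observe, exactly as in the proof of Lemma~\ref{lem:question-preserving}, that a standard compactness / K\"onig's lemma argument bounds all three existentials uniformly: $c \qvdash \Phi$ is equivalent to the existence of some $t \in \N$ such that for every 2-coloring $h : t \to 2$, there exist $i < 2$, $\rho \subseteq X \cap [0, t)$, and $x < t$ with $\rho$ $(f, h)$-avoiding $p_i$ and $\varphi_i(\sigma_i \cup \rho, x)$ holding. This is the only nontrivial point, and it is already implicit in the preservation lemma since the parameters $i, \rho, x$ are all picked uniformly in $h$ and the $(f, h)$-avoidance predicate only consults finitely many values of $h$.

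Finally, I would claim that this same $t$ witnesses the bounded question $c \qvdash \exists x < t\, \varphi_0(G_0, x) \vee \exists x < t\, \varphi_1(G_1, x)$. Given any $h : \N \to 2$, restrict it to $h \uh_t$ and apply the bounded version above to extract $i < 2$, $\rho \subseteq X \cap [0, t)$, and $x < t$ with $\rho$ $(f, h \uh_t)$-avoiding $p_i$ and $\varphi_i(\sigma_i \cup \rho, x)$. Since $\rho \subseteq [0, t)$, the property \qt{$\rho$ $(f, h)$-avoids $p_i$} depends only on $h \uh_t$, so it transfers verbatim to $h$, and the witness $x$ satisfies $x < t$ as required. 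No substantial obstacle is anticipated: the statement is essentially a repackaging of the uniform compactness bound from Lemma~\ref{lem:question-preserving}, exactly analogous to how Lemma~\ref{lem:question-i-compact} is extracted from Lemma~\ref{lem:question-i-preserving}.
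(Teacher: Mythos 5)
Your proposal is correct and follows essentially the same route as the paper: the paper's proof simply observes that the bound $t$ produced by the compactness argument in the proof of \Cref{lem:question-preserving} already witnesses the bounded forcing question. Your additional verification that the witnesses for $h\uh_t$ transfer to an arbitrary $h:\NN\to 2$ (since $(f,h)$-avoidance of a finite $\rho$ and the $\Delta^0_0$ matrix only consult finitely many values of $h$) just makes explicit what the paper leaves implicit.
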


\begin{proof}
Again, the bound~$t$ in the proof of \Cref{lem:question-preserving} is such that $c \qvdash \exists x<t \,\varphi_0(G_0,x) \lor \exists x<t \, \varphi_0(G_0,x)$ holds. 
\end{proof}

We now prove the core merging lemma satisfied by the forcing questions. In particular, the first item says that one can find a simultaneous witness to a negative answer from the disjunctive forcing question and a positive answer from the non-disjunctive forcing questions.

\begin{lemma}\label[lemma]{lem:merging}
    Let $\varphi_0,\varphi_1,\psi_0,\psi_1$ be $\Sigma^0_1$ formulas, and let $c$ be a condition. 

    \begin{itemize}
        \item If  $c \qvdash_i \varphi_i(G_i)$ for both $i<2$ and $c \nqvdash \psi_0(G_0) \lor \psi_1(G_1)$,
    then there exists $d \leq c$ such that $d \Vdash \varphi_i(G_i) \wedge \neg \psi_i(G_i)$ for some $i<2$;
        \item If $c \nqvdash_i \varphi_i(G_i)$ for some $i<2$, then there exists $d \leq c$ such that $d \Vdash \neg \varphi_i(G_i)$;
        \item If $c \qvdash \psi_0(G_0) \lor \psi_1(G_1)$, then there exists  $d \leq c$ such that $d \Vdash \psi_0(G_0) \lor \psi_1(G_1)$.
    \end{itemize}

\end{lemma}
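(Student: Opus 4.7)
The plan is to handle the three items using variants of a common forcing template resting on two ingredients: the $\Sigma^0_1$-preservation and $\Sigma^0_1$-compactness of the forcing questions (already established above), and a $\Pi^0_1$-basis theorem preserving 2-dimensional hyperimmunity of $\Hc$, proven by a standard forcing argument analogous to the classical hyperimmunity-preservation basis theorem. Throughout, I freely use that any infinite $Y \subseteq X$ inherits the $f$-stabilization of $[0, \max(\sigma_0,\sigma_1)]$ and the validity of the $\sigma_i$, so only 2-dimensional $Y$-hyperimmunity of $\Hc$ needs active maintenance.

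For item 3, I would invoke $\Sigma^0_1$-compactness of $\qvdash$ to bound a witness to some $[0,t]$, then iteratively restrict $X$ to an infinite $X$-computable $Y \subseteq X$ that $f$-stabilizes $[0,t]$ with some witness $h^*$; since $Y \leq_T X$, $\Hc$ is still 2-dimensional $Y$-hyperimmune. The positive $\qvdash$-answer applied with any extension of $h^*$ to $\NN$ yields $i < 2$ and a finite $\rho \subseteq X \cap [0,t]$ such that $\rho$ $(f, h^*)$-avoids $p_i$ and $\psi_i(\sigma_i \cup \rho)$ holds. Then $d := (\sigma_i \cup \rho, \sigma_{1-i}, Y \cap (\max \rho, \infty))$ is a valid extension: $(f, g \cup h^*|_\rho)$-avoidance of $\sigma_i \cup \rho$ follows from \Cref{lem:combinatorial-extensibility} via the irreducibility of $p_i$, and since $\psi_i$ is $\Sigma^0_1$, $d \Vdash \psi_0(G_0) \vee \psi_1(G_1)$.

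For item 2, the class of pairs $(h_0, h_1) \in (2^\NN)^2$ witnessing $c \nqvdash_i \varphi_i$ is a non-empty $\Pi^0_1(X)$ class; I apply the basis theorem to pick such a pair preserving 2-dimensional hyperimmunity of $\Hc$, and then extract an infinite $Y \subseteq X$ that is simultaneously $h_0$- and $h_1$-homogeneous, so $d := (\sigma_0, \sigma_1, Y)$ extends $c$. Any further extension contributes some $\rho_i \subseteq Y$, automatically $h_0$- and $h_1$-homogeneous, and $f$-avoiding $p_i$ by validity; divergence of $p_i$ promotes this to $(f, h_0)$-avoidance (as $h_0$ is constant on $\rho_i$ while $p_i(\cdot, |p_i|-1)$ is not), so the badness of $(h_0, h_1)$ kills $\varphi_i(\sigma_i \cup \rho_i)$ and $d \Vdash \neg \varphi_i(G_i)$. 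Item 1 merges these two techniques: a common compactness bound $t$; an infinite $Y \subseteq X$ $f$-stabilizing $[0,t]$ with witness $g^*$; then via the basis theorem a coloring $h^* : \NN \to 2$ bad for $\qvdash \psi_0 \vee \psi_1$ with $(h^* \oplus Y)$-preservation; finally apply $c \qvdash_0 \varphi_0$ with the pair $(g^*, h^*)$ to obtain $\rho \subseteq X \cap [0,t]$ that is $g^*$-homogeneous, $h^*$-homogeneous, $(f, g^*)$-avoids $p_0$, and satisfies $\varphi_0(\sigma_0 \cup \rho)$. Divergence of $p_0$ converts $h^*$-homogeneity into $(f, h^*)$-avoidance, so $\neg \psi_0(\sigma_0 \cup \rho)$ holds by badness of $h^*$; the triple $d := (\sigma_0 \cup \rho, \sigma_1, Y \cap (\max \rho, \infty))$ is then a valid extension (by \Cref{lem:combinatorial-extensibility}) forcing $\varphi_0(G_0) \wedge \neg \psi_0(G_0)$.

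The main obstacle is the coordination, in the merging step, of the two colorings passed to $\qvdash_0$: one ($g^*$) must coincide with the actual $f$-stabilization witness on $Y$ to make \Cref{lem:combinatorial-extensibility} apply and guarantee validity of the extension, while the other ($h^*$) must come from a $\Pi^0_1$-basis theorem witness bad for $\nqvdash$. The divergence hypothesis on the sub-patterns is the crucial combinatorial feature bridging these two roles, as it is what lets homogeneity for the second coloring $h^*$ entail $(f, h^*)$-avoidance \emph{a posteriori}, after $f$-avoidance of $p_0$ has already been produced by the $(f, g^*)$-avoidance clause.
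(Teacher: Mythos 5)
Your second and third items follow essentially the paper's own argument: a non-empty $\Pi^0_1(X)$ class of bad colorings together with a basis theorem preserving 2-dimensional hyperimmunity (the paper cites Liu--Patey for $\WKL$ rather than reproving it) handles the negative case, and $\Sigma^0_1$-compactness plus a reservoir $f$-stabilizing the bounded segment handles the positive disjunctive case. Those parts are fine.

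The first item, however, has a genuine gap. Forcing $\neg\psi_0(G_0)$ for a $\Sigma^0_1$ formula requires that $\neg\psi_0(\sigma_0\cup\rho\cup\tau)$ hold for \emph{every} finite $\tau$ that a later condition may add from the new reservoir, not merely that $\neg\psi_0(\sigma_0\cup\rho)$ holds at the current stem. The badness of $h^*$ only excludes $\psi_0(\sigma_0\cup\rho\cup\tau)$ when the added part $\rho\cup\tau$ $(f,h^*)$-avoids $p_0$, and divergence does not give this: $\rho$ is $h^*$-homogeneous for some color you do not control, the future $\tau$'s come from a reservoir you never thinned to be $h^*$-homogeneous, and $\rho\cup\tau$ is in general not $h^*$-homogeneous, so the badness disjunction can be satisfied vacuously by its first disjunct and $\psi_0$ can fire on a later extension. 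This is precisely where the paper needs \Cref{lem:combinatorial-merging-for-color} and the $i$-merging hypothesis: one thins the reservoir to an $h$-homogeneous set $Y$ of some color $1-i$ (a color one cannot choose), works on the side $i$ determined by that color --- which is why the conclusion reads \qt{for some $i<2$} and why fixing side $0$ in advance is wrong --- and applies $\qvdash_i\varphi_i$ with the pair $(g',h)$; the $g'$-homogeneity of $\rho$ supplies hypothesis (2) of \Cref{lem:combinatorial-merging-for-color} (constancy of $f$ between $\rho$ and $Y$, via stabilization), its $h$-homogeneity supplies hypothesis (1), and the $i$-merging of $p_i$ then yields that $\rho\cup\tau$ $(f,h)$-avoids $p_i$ for every admissible later $\tau\subseteq Y$, which is what makes $d\Vdash\neg\psi_i(G_i)$ go through. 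Your closing claim that divergence is the crucial bridge is therefore off the mark for this item: divergence only covers the case where everything carries a single $h$-color, and the entire reason the theorem asks for a $0$-merging and a $1$-merging sub-pattern (and uses a disjunctive forcing with a side chosen at diagonalization time) is to cope with a color mismatch between $\rho$ and the reservoir.
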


\begin{proof}
Say $c = (\sigma_0, \sigma_1, X)$.
\begin{itemize}
    \item  Since $c \nqvdash \psi_0(G_0) \lor \psi_1(G_1)$, the class~$\C$ of every 2-coloring witnessing that failure is non-empty. Note that $\C$ is a $\Pi_1^0(X)$-class, and by \cite[Corollary 2.7]{liu2022reverse}, $\WKL$ preserves one 2-dimensional hyperimmunity, so there exists an element $h \in \C$ such that $\Hc$ is still 2-dimensional hyperimmune relative to~$h \oplus X$. 

    Since for both $i<2$, $c \qvdash_i \varphi_i(G)$, by compactness, there exists some~$\ell \in \NN$ such that  for both $i<2$, the following property $(\dagger_i)$ holds: for every pair of partitions $h_0 : [0, \ell] \to 2$ and $h_1 : [0, \ell] \to 2$, there exists a finite $h_0$-homogeneous and $h_1$-homogeneous set $\rho_i \subseteq X \cap [0, \ell]$ which $(f,h_0)$-avoids $p_i$ such that $\varphi_i(\sigma_i \cup \rho_i)$ holds.
    
    Let $Y \subseteq X \setminus [0, \ell]$ be an infinite $X \oplus h$-computable subset which is $h$-homogeneous and $f$-stabilizes $[0, \ell]$, say with witness $g' : [0, \ell] \to 2$. 
    Let $i < 2$ be such that $Y$ is $h$-homogeneous for color~$1 -i$. Without loss of generality, suppose $i=0$. By $(\dagger_0)$, letting $h_0 = g'$ and $h_1 = h$, there is a finite $g'$ and $h$-homogeneous set $\rho \subseteq X \cap [0, \ell]$ which $(f,g')$-avoids $p_0$ such that $\varphi_0(\sigma_0 \cup \rho)$ holds.
    
    Note that $\rho$ and $Y$ are both $h$-homogeneous, but not necessarily of the same color.

    We first claim that $d = (\sigma_0 \cup \rho, \sigma_1, Y)$ is a valid condition. Indeed, by \Cref{lem:combinatorial-extensibility}, $\sigma_0 \cup \rho$ $(f, g')$-avoids $p_0$. Moreover, by choice of~$Y$, it $f$-stabilizes~$\sigma_0 \cup \rho$ with witness~$g'$. Last, $Y$ is $X \oplus h$-computable, hence $\Hc$ is 2-dimensional hyperimmune relative to~$Y$.

    We now claim that $d \Vdash \neg \psi_i(G_i)$ for both~$i < 2$. Indeed, let $\tau \subseteq Y$ be such that $\sigma_i \cup \rho \cup \tau$ $(f, g')$-avoids~$p_i$. In particular, $\rho \cup \tau$ $f$-avoids $p_i$ and $\rho$ is also $h$-homogeneous for color $1-i$, thus, by \Cref{lem:combinatorial-merging-for-color}, $\rho \cup \tau$ $(f, h)$-avoids $p_i$, hence $\neg \psi_i(\sigma_i \cup \rho \cup \tau)$ holds since  $h \in \C$.

    Last, $d \vdash \varphi_0(G_0)$ by choice of~$\rho$.

    \item 
    Suppose $c \nqvdash_i \varphi_i(G)$, and consider the class~$\C$ of every pair of 2-colorings witnessing that failure. Note that $\C$ is a $\Pi_1^0(X)$-class, and by \cite[Corollary 2.7]{liu2022reverse}, $\WKL$ preserves one 2-dimensional hyperimmunity : there exists an element $(h_0,h_1) \in \C$ such that $\Hc$ is still 2-dimensional hyperimmune relative to $X \oplus h_0 \oplus h_1$. Let $a,b<2$ be such that $Y := X \cap \{ x : h_0(x)=a$ and $h_1(x)=b \}$ is infinite. The condition $d := (\sigma_0,\sigma_1,Y)$ is a valid condition below $c$. We claim that $d \Vdash \neg \varphi_i(G_i)$ : for any extension $(\tau_0,\tau_1,Z)$ of $d$, by construction, letting $\rho = \tau_i \setminus \sigma_i$,  $\rho \subseteq Y$, and as such $\rho$ is both $h_0$-homogeneous and $h_1$-homogeneous. Moreover, $\tau_i$ hence $\rho$ $f$-avoids $p_i$, and since $p$ is divergent, $\rho$ does $(f,h_i)$-avoid $p_i$. Thus $\neg \varphi_i(\tau_i)$ will hold.

    \item Suppose $c \qvdash \psi_0(G_0) \lor \psi_1(G_1)$. By compactness, there exists some~$\ell \in \NN$ such that the following property $(\dagger)$ holds: for every coloring $h : [0, \ell] \to 2$, there exists a side~$i < 2$ and a finite set $\rho \subseteq X \cap [0, \ell]$ which $(f, h)$-avoids $p_i$ such that $\psi_i(\sigma_i \cup \rho)$ holds.

    Let $Y \subseteq X \setminus [0, \ell]$ be an infinite $X$-computable subset $f$-stabilizing $[0, \ell]$, say with witness $g' : [0, \ell] \to 2$. By $(\dagger)$, letting $h = g'$, there is some~$i < 2$ and a finite set~$\rho \subseteq X \cap [0, \ell]$ which $(f, g')$-avoids~$p_i$ and such that $\psi_i(\sigma_i \cup \rho)$ holds.

    Without loss of generality, suppose $i=0$. We first claim that $d = (\sigma_0 \cup \rho, \sigma_1, Y)$ is a valid condition. Indeed, by \Cref{lem:combinatorial-extensibility}, $\sigma_0 \cup \rho$ $(f, g')$-avoids $p_0$. Moreover, by choice of~$Y$, it $f$-stabilizes~$\sigma_0 \cup \rho$ with witness~$g'$. Last, $Y$ is $X \oplus h$-computable, hence $\Hc$ is 2-dimensional hyperimmune relative to~$Y$.

    Last, $d \Vdash \psi_0(G_0)$ by choice of~$\rho$.
\end{itemize}

\end{proof}

We have all the necessary tools to prove the diagonalization lemma for preservation of a 2-dimensional hyperimmunity, using the existence of $\Sigma^0_1$-preserving, $\Sigma^0_1$-compact forcing questions satisfying the merging lemma (\Cref{lem:merging}).

\begin{lemma}\label[lemma]{lem:generic-hyperimmune}
    Let $\Hc$ be a 2-dimensional hyperimmune bi-family, $c$ be a condition and $\Phi_{e_0}$, $\Phi_{e_1}$ be two 2-array functionals. There exists an extension~$d$ of $c$ forcing either $\Phi_{e_i}^{G_i}$ to be partial or $\Hc$ to intersect $\Phi_{e_i}^{G_i}$ for some $i<2$.
\end{lemma}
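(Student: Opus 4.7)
The plan is to carry out a case analysis driven by the forcing questions $\qvdash_0$, $\qvdash_1$ and $\qvdash$, applying the merging lemma (\Cref{lem:merging}) to extract the desired extension, and invoking the 2-dimensional hyperimmunity of $\Hc$ only in a single branch to rule out the otherwise unhandled case.

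For each $n, m \in \NN$ and $i<2$, let $\varphi_i^n(G_i)$ and $\varphi_i^{n,m}(G_i)$ be the $\Sigma^0_1$-formulas asserting $\Phi_{e_i}^{G_i}(n)\downarrow$ and $\Phi_{e_i}^{G_i}(n,m)\downarrow$, and let $\psi_i(G_i)$ be the (appropriately relativized) $\Sigma^0_1$-formula expressing that the bi-array output by $\Phi_{e_i}^{G_i}$ meets $\Hc$. If there exist $i<2$ and an index $n$ (or $(n,m)$) with $c \nqvdash_i \varphi_i^n(G_i)$, then the second item of \Cref{lem:merging} yields $d \leq c$ with $d \Vdash \neg \varphi_i^n(G_i)$, forcing $\Phi_{e_i}^{G_i}$ to be partial, and we are done. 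Otherwise, if $c \qvdash \psi_0(G_0) \lor \psi_1(G_1)$, the third item of \Cref{lem:merging} yields $d \leq c$ forcing the disjunction, i.e.\ $\Hc$ intersecting $\Phi_{e_i}^{G_i}$ for some $i$.

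The remaining case---every convergence is forced yet the disjunctive question fails---will be shown impossible. From the failure of $c \qvdash \psi_0(G_0) \lor \psi_1(G_1)$, extract a 2-coloring $h$ such that no finite $\rho \subseteq X$ which $(f,h)$-avoids $p_i$ satisfies $\psi_i(\sigma_i \cup \rho)$, for either $i < 2$. The set of such $h$ forms a $\Pi^0_1(X)$-class, so by the fact that $\WKL$ preserves one 2-dimensional hyperimmunity~\cite[Corollary 2.7]{liu2022reverse}, we may choose $h$ with $\Hc$ still 2-dimensional $X \oplus h$-hyperimmune. Using $\Sigma^0_1$-compactness of $\qvdash_i$ applied with the pair $(h,h)$, one $X \oplus h$-computably finds, for each $i$ and each input $n$ (resp.\ $n,m$), a finite $h$-homogeneous set $\rho_n^i \subseteq X$ (resp.\ $\tau_{n,m}^i \subseteq X$) which $(f,h)$-avoids $p_i$ and on which $\Phi_{e_i}^{\sigma_i \cup \rho_n^i}(n)\downarrow = A_n^i$ (resp.\ $\Phi_{e_i}^{\sigma_i \cup \tau_{n,m}^i}(n,m)\downarrow = B_{n,m}^i$). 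The resulting bi-array $(\vec A^i, \vec B^i)$ is $X \oplus h$-computable, so by hyperimmunity it meets $\Hc$ at some $n, m$ and $i < 2$. Combining the two witnesses into a single set $\rho^* = \rho_n^i \cup \tau_{n,m}^i$, arranged so that consecutive blocks are $f$-stabilized by following elements of $X$, \Cref{lem:combinatorial-extensibility} (whose hypothesis is the irreducibility of $p_i$) shows $\sigma_i \cup \rho^*$ still $(f,h)$-avoids $p_i$; by monotonicity of Turing functionals both convergences persist, so $\psi_i(\sigma_i \cup \rho^*)$ holds, contradicting the choice of $h$.

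The principal obstacle will be the combinatorial merging in the contradiction case: the witnesses $\rho_n^i$ and $\tau_{n,m}^i$ are produced independently via $\qvdash_i$, but to combine them into a single set $\rho^*$ preserving $(f,h)$-avoidance one must layer them across disjoint $f$-stabilized blocks, relying essentially on the irreducibility of $p_i$ through \Cref{lem:combinatorial-extensibility}. Everything else is a routine orchestration of the three forcing questions via the merging lemma.
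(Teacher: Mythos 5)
There is a genuine gap, and it sits at the heart of your plan: the formula $\psi_i(G_i)$, asserting that the bi-array output by $\Phi_{e_i}^{G_i}$ meets $\Hc$, is not $\Sigma^0_1$. The bi-family $\Hc$ is an arbitrary 2-dimensional hyperimmune collection, not assumed c.e.\ or even arithmetical, so membership in $\Hc$ cannot be folded into a $\Sigma^0_1$ formula to which the forcing questions apply. Consequently you cannot pose $c \qvdash \psi_0(G_0) \vee \psi_1(G_1)$, you cannot invoke the third item of \Cref{lem:merging} for it, and in the residual case the class of colorings $h$ witnessing the negative answer is not a $\Pi^0_1(X)$-class, so the appeal to preservation of 2-dimensional hyperimmunity by $\WKL$ collapses. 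The paper's proof avoids mentioning $\Hc$ inside any forcing question: it only asks the genuinely $\Sigma^0_1$ questions \qt{$\Phi_{e_i}^{G_i}(n)\downarrow \subseteq F_n$} (via $\qvdash_i$, for both $i$) and the disjunctive question about the conjunction \qt{$\Phi_{e_i}^{G_i}(n)\downarrow \subseteq \psi(n) \wedge \Phi_{e_i}^{G_i}(n,m)\downarrow \subseteq F_{n,m}$}. The witness sets $\psi(n)$ and $\psi(n;m)$ are then found $X$-computably (by $\Sigma^0_1(X)$-preservation of the questions), the resulting $X$-computable bi-array is played against the 2-dimensional $X$-hyperimmunity of $\Hc$ outside the forcing, and closure of $\Hc$ under subset product transfers membership from $(\psi(n),\psi(n;m))$ to the actual forced outputs.

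Your treatment of the residual case is also flawed, and in fact that case is not impossible: it corresponds to the paper's Case 2, where $\psi(n)$ is defined but $\psi(n;m)$ is not, and the correct move is to force $\Phi_{e_i}^{G_i}(n,m)\uparrow$ for some $i$ via the first item of \Cref{lem:merging}, not to derive a contradiction. Concretely, your combination $\rho^* = \rho^i_n \cup \tau^i_{n,m}$ of independently obtained witnesses does not preserve the two computations: Turing functionals with set oracles are not monotone under adding elements, so elements of $\tau^i_{n,m}$ below the use of $\Phi_{e_i}^{\sigma_i \cup \rho^i_n}(n)$ (and elements of $\rho^i_n$ absent from the oracle of the second computation) can destroy convergence or change the output. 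Fixing this requires searching for the pair jointly, i.e.\ a nested question about the conjunction --- which is exactly what $\psi(n;m)$ does, and whose possible failure is why forcing partiality must remain an outcome. Moreover, \Cref{lem:combinatorial-extensibility} does not apply as you use it: its hypotheses require the later block to $f$-stabilize the earlier one \emph{with the same witness} as the avoidance coloring, whereas your avoidance is with respect to the external $h$; handling that mismatch is precisely the role of \Cref{lem:combinatorial-merging-for-color}, the $i$-merging hypothesis on $p_i$, and the double-homogeneity requirement built into the definition of $\qvdash_i$ --- none of which your argument engages.
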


\begin{proof}
Say $c = (\sigma_0,\sigma_1,X)$.
    \begin{itemize}
        \item Let $\psi(n)$ be a set $F_n$ such that $c \qvdash_i \Phi_{e_i}^{G_i}(n)\downarrow \subseteq F_n$ for each~$i < 2$, if it exists. Note that by \Cref{lem:question-i-preserving}, the relation $c \qvdash_i \Phi_{e_i}^{G_i}(n)\downarrow \subseteq F_n$ is $\Sigma_1^0(X)$ uniformly in~$n$, so $\psi(n)$ is also $\Sigma_1^0(X)$ uniformly in~$n$.
        \item Let $\psi(n;m)$ be a set $F_{n,m}$ such that 
        $$c \qvdash \bigvee_{i < 2} (\Phi_{e_i}^{G_i}(n)\downarrow \subseteq \psi(n) \land \Phi_{e_i}^{G_i}(n,m)\downarrow \subseteq F_{n,m})$$
        if it exists. Note that by \Cref{lem:question-preserving}, the relation above is $\Sigma_1^0(X)$ uniformly in~$n, m$, hence $\psi(n;m)$ is also $\Sigma_1^0(X)$ uniformly in~$n, m$.
    \end{itemize}

    Now, three cases can hold :

    \begin{itemize}

        \item \emph{Case 1 :} $\exists n \, \psi(n) \uparrow$. Then, by \Cref{lem:question-i-compact}, there is some~$i < 2$ such that $c \nqvdash_i \Phi_{e_i}^{G_i}(n)\downarrow$, and by \Cref{lem:merging}, there is some extension~$d \leq c$ forcing $\Phi_{e_i}^{G_i}(n) \uparrow$.

        \item \emph{Case 2:} $\exists n,m \ (\psi(n)\downarrow \land\ \psi(n;m) \uparrow)$. Then, by \Cref{lem:merging}, there is an extension~$d \leq c$ forcing $\Phi_{e_i}^{G_i}(n) \downarrow \subseteq F_n$ for some $i<2$ and forcing $\Phi_{e_i}^{G_i}(n) \downarrow \subseteq F_n \implies \Phi_{e_i}^G(n,m)\uparrow$, hence $d$ forces 
        $\Phi_{e_i}^{G_i}(n,m)\uparrow$.

        \item \emph{Case 3:} $\forall n,m \, (\psi(n) \downarrow \land\ \psi(n;m) \downarrow)$. Then, by 2-dimensional hyperimmunity of $\Hc$ relative to~$X$, there exist $n,m$ such that $(\psi(n),\psi(n;m)) \in \Hc$.  Moreover, by \Cref{lem:merging}, for some $i<2$, there is an extension~$d \leq c$ forcing $\Phi_{e_i}^{G_i}(n) \downarrow \subseteq \psi(n) \land \Phi_{e_i}^{G_i}(n,m)\downarrow \subseteq \psi(n;m)$, i.e., $(\Phi_{e_i}^{G_i}(n), \Phi_{e_i}^{G_i}(n,m)) \in \Hc$

    \end{itemize}
\end{proof}

We are now ready to prove \Cref{thm:disjunctive-2-dim-hyp-preserves}.

\begin{proof}[Proof of \Cref{thm:disjunctive-2-dim-hyp-preserves}]
Let $\Hc$ be a 2-dimensional hyperimmune bi-family, and $f : [\NN]^2 \to 2$ be a computable coloring.
Consider a sufficiently generic filter $\F$ for the associated notion of forcing. For both $i<2$, by \Cref{lem:generic-infinite},  $G_{\F,i}$ is an infinite set, and, by definition of a condition, $G_{\F,i}$ $f$-avoids $p_i$. Finally, by \Cref{lem:generic-hyperimmune}, $\Hc$ is $2$-hyperimmune relative to $G_{\F,i}$ for some $i<2$.
This completes the proof of \Cref{thm:disjunctive-2-dim-hyp-preserves}.
\end{proof}

\begin{corollary}\label[corollary]{cor:irreducible-divergent-merging-subpattern-preserves}
Let $p$ be a pattern containing two irreducible and divergent sub-patterns $p_0$ and $p_1$ such that for each~$i < 2$, $p_i$ is merging for color~$i$.
Then $\RT^2_2(p)$ preserves 2-dimensional hyperimmunity.
\end{corollary}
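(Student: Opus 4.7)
The plan is to deduce this corollary directly from Theorem \ref{thm:disjunctive-2-dim-hyp-preserves} together with the basic fact about sub-patterns used in Lemma \ref{lem:sub-pattern-implication}. Concretely, I would fix an arbitrary set $Z$, a 2-dimensional $Z$-hyperimmune bi-family $\Hc$, and a $Z$-computable coloring $f : [\NN]^2 \to 2$, and aim to produce an infinite set $H$ that $f$-avoids $p$ and relative to which $\Hc$ is still 2-dimensional hyperimmune.

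First, I would apply (the relativization to $Z$ of) Theorem \ref{thm:disjunctive-2-dim-hyp-preserves} to the pair $p_0, p_1$ and the coloring $f$: since $p_0$ and $p_1$ are irreducible, divergent, and respectively $0$-merging and $1$-merging, this yields an infinite set $H$ and some index $i < 2$ such that $H$ $f$-avoids $p_i$ and $\Hc$ is 2-dimensional hyperimmune relative to $H \oplus Z$.

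Next, I would observe, exactly as in the proof of Lemma \ref{lem:sub-pattern-implication}, that because $p_i$ is a sub-pattern of $p$, any finite set $f$-realizing $p$ contains a subset $f$-realizing $p_i$; hence $H$ $f$-avoids $p$ as well. Therefore $H$ is a solution to the $\RT^2_2(p)$-instance $f$, and together with the preservation of 2-dimensional hyperimmunity of $\Hc$ relative to $H \oplus Z$, this witnesses preservation of one 2-dimensional hyperimmunity for $\RT^2_2(p)$. No obstacle is expected: the only non-trivial input is Theorem \ref{thm:disjunctive-2-dim-hyp-preserves}, already established, and the sub-pattern passage is immediate.
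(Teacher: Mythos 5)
Your argument is correct and is essentially the paper's own proof: the paper derives the corollary as an immediate consequence of \Cref{thm:disjunctive-2-dim-hyp-preserves} together with the sub-pattern observation of \Cref{lem:sub-pattern-implication}, which is exactly the relativized application and sub-pattern passage you spell out.
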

\begin{proof}
Immediate by \Cref{thm:disjunctive-2-dim-hyp-preserves} and \Cref{lem:sub-pattern-implication}.
\end{proof}

The goal is now to prove the reciprocal of \Cref{cor:irreducible-divergent-merging-subpattern-preserves}. For this, we need to construct some specific 2-dimensional hyperimmune bi-family~$\Hc$ based on a stable coloring $f : [\NN]^2 \to 2$, such that for every infinite set~$H$ avoiding the desired pattern~$p$, $\Hc$ is not 2-dimensional $H$-hyperimmune. Recall that a coloring $f : [\NN]^2 \to 2$ is stable if for every~$x$, $\lim_y f(x, y)$ exists.

\begin{definition}
Fix a stable coloring $f : [\NN]^2 \to 2$. Given two sets $E < F$, we write $E \to_i F$ for $(\forall x \in E)(\forall y \in  F)f(x, y) = i$. For every $i < 2$, we let $A_i(f) = \{x : (\forall^\infty y)f(x, y) = i\}$. Finally, we let $\Hc_i(f)$ be the bi-family of all pairs $(E, F)$ such that $E < F$, $E \subseteq A_i(f), F \subseteq A_{1-i}(f), $ and $E \to_{1-i} F$. 
\end{definition}

The following result and its proof are an immediate adaptation of Liu and Patey~\cite[Proposition 2.10]{liu2022reverse}.

\begin{theorem}\label[theorem]{thm:f-2-dim-hyperimmune}
There exists a stable computable coloring $f : [\NN]^2 \to 2$ such that for each~$i < 2$, $\Hc_i(f)$ is 2-dimensional hyperimmune.
\end{theorem}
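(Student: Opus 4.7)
The plan is a finite-injury priority construction. I would list requirements
\[
\R_{i,e} : \text{if } \Phi_e \text{ is a total bi-array } (\vec E, \vec F), \text{ then } \exists\, n, m \text{ with } (E_n, F_{n,m}) \in \Hc_i(f),
\]
one per pair $(i, e)$ with $i < 2$ and $e$ a Turing index, ordered by Cantor pairing. The strategy for $\R_{i,e}$ searches at each stage $s$ for indices $(n, m)$ such that $\Phi_e(n)[s]\downarrow = E_n$ and $\Phi_e(n, m)[s]\downarrow = F_{n,m}$, with $E_n \cup F_{n,m}$ disjoint from the elements committed by strategies of higher priority, and $E_n < F_{n,m}$. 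The bi-array lower bounds $\min E_n > n$ and $\min F_{n,m} > m$ let me meet these constraints by taking $n$, and then $m$, sufficiently large, so if $\Phi_e$ is total, such $(n, m)$ eventually appears.

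To turn the commitments into an actual coloring, I would use a switch-point design: each $x$ is assigned a limit color $c(x) \in \{0, 1\}$ and a threshold $t_x \in \NN$, and I set $f(x, y) := 1 - c(x)$ if $y < t_x$, and $f(x, y) := c(x)$ otherwise. This automatically gives stability with $\lim_y f(x, y) = c(x)$, and hence $A_i(f) = c^{-1}(i)$. When $\R_{i,e}$ commits a witness $(E_n, F_{n,m})$, I set $c(x) := i$ and $t_x := \max F_{n,m} + 1$ for $x \in E_n$, and $c(y) := 1-i$ for $y \in F_{n,m}$. Then for every $x \in E_n$ and every $y \in F_{n,m}$ we have $y \leq \max F_{n,m} < t_x$, so $f(x, y) = 1 - c(x) = 1 - i$, which makes $E_n \to_{1-i} F_{n,m}$ and so $(E_n, F_{n,m}) \in \Hc_i(f)$. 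Elements $x$ that are never claimed use defaults $c(x) = 0$ and $t_x = 0$.

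The hard part, which I expect to be the main obstacle, is checking that $c$, $t$, and therefore $f$, are genuinely computable rather than merely $\Delta^0_2$. A standard finite-injury analysis handles the priority aspect: each strategy is injured at most finitely often, since each higher-priority strategy commits at most once, so $\R_k$'s commitment stabilizes after finitely many steps. For computability the lower bound $\min E_n > n$ is what matters: for any fixed $x$, the only requirements that can ever claim $x$ are those choosing some $n < x$ at some stage, and with careful bookkeeping this dependency set is finite and computably determined, so $c$ and $t$ can be read off at a computable stage. Verification of each $\R_{i,e}$: once all higher-priority strategies have stabilized, the freedom to pick large $n, m$ together with totality of $\Phi_e$ produces a successful and permanent commitment, yielding $(E_n, F_{n,m}) \in \Hc_i(f)$.
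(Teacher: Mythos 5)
There is a genuine gap, and it is exactly at the point you flag as the main obstacle: the computability of $f$. In your switch-point design, $f(x,y)$ for $y < t_x$ is determined retroactively by the \emph{final} values of $c(x)$ and $t_x$, and these are only limit-computable. Your argument that the dependency set for a fixed $x$ is ``finite and computably determined'' does not work: the bounds $\min E_n > n$ and $\min F_{n,m} > m$ only force $n, m < x$, but they place no restriction on the index $e$, so infinitely many requirements $\R_{i,e}$ can claim $x$; and even if one adds a restraint such as ``$\R_{i,e}$ only claims elements $> e$'' (so that only finitely many requirements are relevant), the stage at which $\Phi_e(n)$ and $\Phi_e(n,m)$ converge is not computably bounded, so there is no computable stage by which $c(x)$ and $t_x$ have settled. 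Concretely, $\Phi_e(n,m)$ may converge at a huge stage $s$ to a set $F_{n,m}$ of small numbers; your formula then demands that the pairs $(x,y)$ with $x \in E_n$, $y \in F_{n,m}$ were colored $1-i$, even though any computable stage-by-stage definition of $f$ had to fix those values long before the claim occurred. As written, your $f$ is $\Delta^0_2$, not computable.

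The paper's construction resolves precisely this tension with a two-phase strategy compatible with defining $f(x,s)$ at stage $s$ from current commitments only. When $E = \Phi_e(n)$ first converges, the strategy commits $E$ to the \emph{opposite} limit $1-i$; this affects only future columns, so it is consistent with a computable stage-by-stage definition, and it guarantees that every column appearing after the commitment is colored $1-i$ above $E$. Since $\min F_{n,m} > m$, for large $m$ any converging $F_{n,m}$ consists of such late elements, so eventually some $F = \Phi_e(n,m)$ appears with $E < F$ and $E \to_{1-i} F$ already true of the colors laid down so far. Only then does the strategy switch the commitment of $E$ to limit $i$ and commit $F$ to limit $1-i$; the switch changes only future columns, so the already-established relation $E \to_{1-i} F$ is preserved, and one gets $E \subseteq A_i(f)$, $F \subseteq A_{1-i}(f)$, $E \to_{1-i} F$ with $f$ genuinely computable and stable. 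Your single-phase strategy, which waits for both halves of the witness before acting, cannot achieve this without the retroactive recoloring that destroys computability; adding the paper's ``commit to the wrong limit first, then wait, then switch'' idea is the missing ingredient.
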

    
\begin{proof}
    We build the coloring $f : [\N]^2 \to 2$ by a finite injury priority argument. For
every $e \in \omega$, we want to satisfy the following requirement:
\begin{quote}
    $\R_{e,i}$ : If $\Phi_e$ is total, then there is some $n, m \in \N$ such that 
$\Phi_e(n) \subseteq A_i(f)$, $\Phi_e(n; m) \subseteq A_{1-i}(f)$ and $\Phi_e(n) \to_{1-i} \Phi_e(n; m)$.
\end{quote}

The requirements are given the usual priority ordering $\R_{0,0} < \R_{0,1} < \R_{1,0} \dots$. Initially, the
requirements are neither partially, nor fully satisfied.
\begin{itemize}
    \item A requirement $\R_{e,i}$ \emph{requires a first attention} at stage $s$ if it is not partially
satisfied and $\Phi_{e}(n)[s] \downarrow= E$ for some set $E \subseteq \{e + 1, \dots , s - 1\}$ such that
no element in E is restrained by a requirement of higher priority. If it
receives attention, then it puts a restraint on $E$, commit the elements of $E$
to be in $A_{1-i}(f)$, and is declared partially satisfied.
    \item  A requirement $\R_{e,i}$ \emph{requires a second attention} at stage $s$ if it is not fully
satisfied, $\Phi_{e}(n)[s] \downarrow= E$ and $\Phi_{e}[s](n; m) \downarrow= F$ for some sets $E < F \subseteq
\{e + 1, \dots , s - 1\}$ such that $E \to_{1-i} F$ and which are not restrained by
a requirement of higher priority. If it receives attention, then it puts a
restraint on $E \cup F$, commits the elements of $E$ to be in $A_{i}(f)$, the elements
of $F$ to be in $A_{1-i}(f)$, and is declared fully satisfied.
\end{itemize}
At stage $0$, we let $f = \emptyset$. Suppose that at stage $s$, we have defined $f(x, y)$ for every
$x < y < s$. For every $x < s$, if it is committed to be in some $A_i(f)$, set $f(x, s) = i$. Let $\R_{e,i}$ be the requirement of highest priority which
requires attention. If $\R_{e,i}$ requires a second attention, then execute the second
procedure, otherwise execute the first one. In any case, reset all the requirements
of lower priorities by setting them unsatisfied, releasing all their restraints, and go
to the next stage. This completes the construction. One easily sees by induction that each requirement $\R_{e,i}$ acts finitely often, and either $\Phi_e$ is partial, in which case $\R_{e,i}$ is vacuously satisfied, or is eventually fully satisfied. This
procedure also yields a stable coloring. 
\end{proof}

\begin{remark}\label[remark]{rem:2-dim-hyp-is-hyp}
Let $f : [\NN]^2 \to 2$ be a stable computable coloring such that for some~$i < 2$, $\Hc_i(f)$ is 2-dimensional hyperimmune and let $A_i = \{ x : \lim_y f(x, y) = i \}$. Then $A_0$ and $A_1$ are both hyperimmune. Indeed, any c.e.\ array $(E_n : n \in \NN)$ can be transformed into a bi-array $(E_n, F_{n,m} : n, m \in \NN)$ by letting $F_{n,m} = E_m$. If $\Hc_i(f)$ is 2-dimensional hyperimmune, then $(E_n, F_{n,m}) \in \Hc_i(f)$ for some~$n, m \in \NN$, in which case $E_n \subseteq A_i$ and $E_m \subseteq A_{1-i}$, so both $A_0$ and $A_1$ are hyperimmune.
\end{remark}

Recall that a finite set $F = \{ x_0 < \dots < x_{\ell-2} \}$ \emph{strongly $f$-realizes} a pattern $p$ if $F$ $f$-realizes~$p^-$ and for every $i < \ell-1$ and all but finitely many~$y \in \NN$, $f(x_i, y) = p(i, \ell-1)$. 
Accordingly, a pattern $p$ \emph{strongly $f$-appears} in a set $H$ if there exists a finite subset $F \subseteq H$ which strongly $f$-realizes $p$.


\begin{theorem}\label[theorem]{thm:2-dim-reciprocal}
    Fix $i < 2$.
    Let $p$ be a pattern of size at least~2 such that all of its sub-patterns are either reducible, convergent, or non-$i$-merging, and let $f : [\N]^2 \to 2$ be a stable computable coloring. For every infinite set $H$ such that $\Hc_i(f)$ is 2-dimensional $H$-hyperimmune, the pattern $p$ strongly $f$-appears in~$H$.
\end{theorem}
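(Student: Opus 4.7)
I proceed by induction on $|p|$, extending the proof of \Cref{thm:sub-pattern-convergent-reducible-strongly-appears}. By \Cref{rem:2-dim-hyp-is-hyp}, both $A_0(f)$ and $A_1(f)$ are $H$-hyperimmune. The base case $|p|=2$ amounts to producing $x_0 \in H \cap A_{p(0,1)}(f)$, which follows by applying $H$-hyperimmunity of $A_{1-p(0,1)}(f)$ to the $H$-computable array $F_n = \{\min(H \cap (n, \infty))\}$. For the inductive step, $p$ is by hypothesis convergent, reducible, or non-$i$-merging. If $p$ is convergent or reducible, the argument of \Cref{thm:sub-pattern-convergent-reducible-strongly-appears} applies essentially verbatim, since it only relies on $H$-hyperimmunity of $A_0(f)$ and $A_1(f)$.

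The new case is when $p$ is non-$i$-merging, irreducible, and not convergent. Let $P \sqcup Q = \{0, \ldots, |p|-2\}$ be a partition witnessing non-$i$-merging: $P, Q \neq \emptyset$, $P < Q$, $p(x, |p|-1) = i$ on $P$, $p(x, |p|-1) = 1-i$ on $Q$, and $p$ is constantly some color $c$ on $P \times Q$. I first claim that $c = 1-i$: otherwise $p(x, y) = i$ for every $x \in P$ and $y \in Q \cup \{|p|-1\}$, so the partition $P \sqcup (Q \cup \{|p|-1\})$ of $|p|$ violates the criterion of \Cref{lem:irreducible-is-extensible}, making $p$ reducible and contradicting the case assumption.

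Having $c = 1-i$, I build an $H$-computable bi-array $\langle E_n, F_{n,m} \rangle$ as follows: let $E_n$ be any $|P|$-element subset of $H \cap (n, \infty)$ that $f$-realizes $p \uh_P$, and let $F_{n,m}$ be any $|Q|$-element subset of $H \cap (\max(\max E_n, m), \infty)$ that $f$-realizes $p \uh_Q$. Existence at every step follows from the inductive hypothesis applied to the sub-patterns $p \uh_P$ and $p \uh_Q$ (which satisfy the same hypothesis, as their sub-patterns are also sub-patterns of $p$) together with \Cref{rem:strongly-appears-implies-appears}; when $|P|=1$ or $|Q|=1$, any singleton in the prescribed range suffices. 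By 2-dimensional $H$-hyperimmunity of $\Hc_i(f)$ applied to this bi-array, some pair $(E_n, F_{n,m})$ belongs to $\Hc_i(f)$, so $E_n \subseteq A_i(f)$, $F_{n,m} \subseteq A_{1-i}(f)$, and $E_n \to_{1-i} F_{n,m}$. The union $E_n \cup F_{n,m}$ has exactly $|p|-1$ elements: pairs inside $E_n$ (resp.\ inside $F_{n,m}$) $f$-realize $p \uh_P$ (resp.\ $p \uh_Q$) by construction, and cross pairs take color $1-i = c$, matching $p$ on $P \times Q$. Hence $E_n \cup F_{n,m}$ $f$-realizes $p^-$, and the limit conditions coming from $\Hc_i(f)$ match the last column of $p$ on $P$ and on $Q$, so $E_n \cup F_{n,m}$ strongly $f$-realizes $p$ in $H$.

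The main obstacle is the design of the bi-array: one must decompose $p$ along the non-$i$-merging partition and exploit the fact that irreducibility forces the cross colour $c$ to equal the $1-i$ required by the definition of $\Hc_i(f)$. Once this colour match is set up, 2-dimensional hyperimmunity packages the two smaller $f$-realizations produced by the inductive hypothesis into a full strong $f$-realization of $p$.
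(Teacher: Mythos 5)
Your proposal is correct and follows essentially the same route as the paper: induction on $|p|$, reusing the convergent and reducible cases of \Cref{thm:sub-pattern-convergent-reducible-strongly-appears} (via \Cref{rem:2-dim-hyp-is-hyp}), and in the remaining divergent, irreducible, non-$i$-merging case using irreducibility to force the cross colour to be $1-i$ and then feeding a bi-array of realizations of $p\uh_P$ and $p\uh_Q$ to the 2-dimensional $H$-hyperimmunity of $\Hc_i(f)$. Your write-up is in fact slightly more careful than the paper's on minor points (the requirement $E_n < F_{n,m}$, the singleton sub-pattern edge cases, and the identification of the second components as realizations of $p\uh_Q$).
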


\begin{proof}
    Let us prove the desired result by induction on the size of $p$. Let $H$ be an infinite set such that $\Hc_0(f)$ is 2-dimensional $H$-hyperimmune.

    First, suppose that $p$ is convergent. If $|p| = 2$, then for every~$k \in \NN$, the singleton pattern $p^-$ $f$-appears in~$H \cap (k, \infty)$. If $|p| > 2$, then by induction hypothesis, for every~$k \in \NN$, $p^-$ strongly $f$-appears in $H \cap (k, \infty)$, so by \Cref{rem:strongly-appears-implies-appears}, $p^-$ $f$-appears in $H \cap (k, \infty)$. Since $f$ is computable, one can find $H$-computably an infinite array $\{F_n\}_{n\in \NN}$ included in~$H$ and such that each $F_n$ $f$-realizes $p^-$. 
    By \Cref{rem:2-dim-hyp-is-hyp}, $A_0$ and $A_1$ are both $H$-hyperimmune, so there is some~$n$ such that $F_n \subseteq A_{p(0,|p|-1)}$. Then $F_n$ strongly $f$-realizes~$p$, so $p$ strongly $f$-appears in $H$.

    Suppose now that $p$ is divergent and reducible. By the same argument as in \Cref{thm:sub-pattern-convergent-reducible-strongly-appears}, $p$ strongly $f$-appears in~$H$. 
    

    Finally, suppose $p$ divergent, irreducible, and not $i$-merging. This yields a non-trivial partition $F \cup G = \ell-1$ such that $F < G$ and such that :
 \begin{enumerate}
    \item $\forall x \in F$, $p(x, \ell-1) = i$
    \item $\forall x\in G$, $p(x, \ell-1) = 1-i$
    \item $\forall x_0,x_1 \in F\ \forall y_0, y_1 \in G$, $p(x_0, y_0) = p(x_1, y_1)$.
\end{enumerate}

If the color of item~3 is $i$, then $p = p \uh_{F \cup \{\min G\}} \uplus\ p \uh_{G \cup \{\ell-1\}}$, contradicting the fact that $p$ is irreducible, so assume $\forall x \in F \forall y \in G\ p(x, y) = 1-i$. By induction hypothesis, for every $k \in \NN$, $p \uh_F$ and $p \uh_G$ strongly $f$-appear in $H \cap (k, \infty)$. In particular, by \Cref{rem:strongly-appears-implies-appears}, for every~$k \in \NN$, $p \uh_F$ and $p \uh_G$ $f$-appear in $H \cap (k, \infty)$.

Consider an $H$-computable bi-array $(R_n, S_{n,m} : n,m > 0)$ such that $R_n \subseteq H$ $f$-realizes $p \uh_F$ and $S_{n,m} \subseteq H$ $f$-realizes $p \uh_G$.  Since $\Hc_i(f)$ is 2-dimensional $H$-hyperimmune, $(R_n, S_{n,m} : n,m > 0)$ intersects $\Hc_i(f)$, in other words, there exists $n,m \in \NN$ such that $(R_n, S_{n,m}) \in \Hc_i(f)$. By definition of $\Hc_i(f)$, $R_n \subseteq A_i$, $S_{n,m} \subseteq A_{1-i}$ and $R_n \to_{1-i} S_{n,m}$.
By item (3), $R_n \cup S_{n,m}$ $f$-realize $p^-$, and by items (1-2), the set $R_n \cup S_{n,m} \subseteq H$ strongly $f$-realizes $p$.
\end{proof}

\begin{proof}[Proof of \Cref{thm:2dimhyper}]
Suppose first $p$ contains two divergent and irreducible sub-patterns $p_0$ and $p_1$ such that $p_0$ is $0$-merging and $p_1$ is $1$-merging. Then by \Cref{cor:irreducible-divergent-merging-subpattern-preserves}, $\RT_2^2(p)$ preserves one 2-dimensional hyperimmunity.

Conversely, suppose that $\RT_2^2(p)$ preserves one 2-dimensional hyperimmunity.
Let $f$ be a stable function such that both $\Hc_0(f)$ and  $\Hc_1(f)$ are 2-dimensional hyperimmune. 
Fix some~$i < 2$ and let $H$ be an infinite set $f$-avoiding $p$ such that $\Hc_i(f)$ is 2-dimensional $H$-hyperimmune. By \Cref{rem:strongly-appears-implies-appears}, $p$ does not strongly $f$-appear in $H$, so by \Cref{thm:2-dim-reciprocal}, $p$ contains a sub-pattern which is divergent, irreducible and $i$-merging.
\end{proof}

\section{Preservation of $\omega$ $2$-dim hyperimmunities}\label[section]{sect:omega-2dim-hyp}

Patey~\cite{patey2017iterative} proved the existence, for every~$k$, of a problem $\Psf_k$ which preserves $k$, but not $k+1$ hyperimmunities. Since $\RT^2_2$ preserves one hyperimmunity, so do every statement $\RT^2_2(p)$ where $p$ is a non-constant pattern. However, the proof of \Cref{omega-hyp-divergent-irreducible} showed that if $\RT^2_2(p)$ does not preserve $\omega$ hyperimmunities, then it does not preserve 2 hyperimmunities either, so the hierarchy collapses for this family of statements. This collapsing is actually due to the fact that we consider only 2-colorings. Indeed, the statement $\Psf_k$ studied by Patey~\cite{patey2017iterative} is of the form $\RT^2_{k+1}(p)$ for some pattern~$p : [\ell]^2 \to k+1$.

In the case of 2-dimensional hyperimmunities, the hierarchy also collapses at level 2 for 2-colorings. In this section, we characterize the statements $\RT^2_2(p)$ which preserve $\omega$ 2-dimensional hyperimmunities in terms of~$p$. As it turns out, the characterization is very similar to \Cref{thm:2dimhyper}, except that one requires the existence of a single sub-pattern which is simultaneously 0-merging and 1-merging. The existence of such a sub-pattern enables to define a non-disjunctive notion of forcing, and as such, to satisfy all the requirements independently.


\begin{definition}
A pattern $p : [\ell]^2 \to 2$ is \emph{merging} if it is both $0$-merging and $1$-merging.
\end{definition}

The goal is to prove the following theorem:

\begin{theorem}\label[theorem]{thm:main-non-disjunctive-2-dim-hyp-preserves}
Let $p$ be a pattern. $\RT^2_2(p)$ preserves $\omega$ 2-dimensional hyperimmunities iff $p$ contains a sub-pattern which is simultaneously irreducible, merging and divergent.
\end{theorem}

As usual, the proof of \Cref{thm:main-non-disjunctive-2-dim-hyp-preserves} is divided into \Cref{thm:non-disjunctive-2-dim-hyp-preserves} and \Cref{thm:omega-2-dim-reciprocal}. All the proofs are straightforward adaptations of \Cref{sect:2dim-hyp} to a non-disjunctive setting, so they will be omitted.

\begin{theorem}\label[theorem]{thm:non-disjunctive-2-dim-hyp-preserves}
Let $p$ be an irreducible, divergent, and merging pattern. Then $\RT^2_2(p)$ preserves $\omega$ 2-dimensional hyperimmunities.
\end{theorem}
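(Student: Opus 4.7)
The plan is to imitate the proof of \Cref{thm:disjunctive-2-dim-hyp-preserves}, but replace the disjunctive Mathias forcing (using pairs $(\sigma_0,\sigma_1,X)$) by a single-sided Mathias forcing (using $(\sigma,X)$). The crucial point is that because $p$ is \emph{both} $0$-merging and $1$-merging, \Cref{lem:combinatorial-merging-for-color} applies regardless of which color a given $h$-homogeneous extension happens to take, so one never needs to split into two sides. Working relative to an arbitrary base $Z$, fix a $Z$-computable coloring $f:[\NN]^2\to 2$ and a countable family $\langle \Hc_s : s \in \NN\rangle$ of bi-families that are $2$-dimensional $Z$-hyperimmune.

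A condition is a Mathias pair $(\sigma,X)$ such that (i) $X$ $f$-stabilizes $\sigma$ with some witness $g$, (ii) $\sigma$ $(f,g)$-avoids $p$, and (iii) each $\Hc_s$ is $2$-dimensional $X\oplus Z$-hyperimmune. Extensions are Mathias extensions, and the generic $G_\F$ is infinite by the analogue of \Cref{lem:omega-hyp-generic-infinite}, using irreducibility plus \Cref{lem:combinatorial-extensibility}. I define the forcing question
\[
c \qvdash \varphi(G) \quad\Longleftrightarrow\quad (\forall h:\NN\to 2)(\exists \rho\subseteq X)\, [\rho\text{ is }h\text{-homogeneous},\ \rho\ (f,h)\text{-avoids }p,\ \varphi(\sigma\cup\rho)],
\]
which is $\Sigma^0_1(X\oplus Z)$ and $\Sigma^0_1$-compact by the usual compactness argument restricting $h$ to a finite domain, exactly as in \Cref{lem:question-i-preserving} and \Cref{lem:question-i-compact}.

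The merging lemma now has only one side. If $c \qvdash \varphi(G)$, compactness produces some level $n$ and, stabilizing $X$ on $[0,n]$ with witness $g'$, a finite $\rho\subseteq X\cap[0,n]$ that is $g'$-homogeneous for some color $i<2$ and $(f,g')$-avoids $p$; applying \Cref{lem:combinatorial-merging-for-color} for the color $i$ (which is allowed because $p$ is $i$-merging), $\sigma\cup\rho$ still $(f,g')$-avoids $p$, giving a valid extension forcing $\varphi$. If $c \nqvdash \varphi(G)$, the $\Pi^0_1(X\oplus Z)$ class of counterexample functions $h$ is nonempty; I invoke the fact that $\WKL$ preserves $\omega$ $2$-dimensional hyperimmunities — a routine generalization of \cite[Corollary 2.7]{liu2022reverse} to countably many bi-families — to find $h$ such that every $\Hc_s$ remains $2$-dimensional $h\oplus X\oplus Z$-hyperimmune; taking $Y\subseteq X$ an infinite $h$-homogeneous $h\oplus X$-computable subset, the condition $(\sigma,Y)$ forces $\neg\varphi(G)$, since any future extension $\tau\setminus\sigma\subseteq Y$ is $h$-homogeneous and divergence of $p$ upgrades $f$-avoidance to $(f,h)$-avoidance.

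With this forcing question in hand, the diagonalization lemma for $2$-dimensional hyperimmunity is the obvious analogue of \Cref{lem:generic-hyperimmune}: given $\Hc_s$ and a $2$-array functional $\Phi_e$, define $\psi(n)$ and $\psi(n;m)$ via $\qvdash$ exactly as there, then split into the three cases (partiality at level $1$, partiality at level $2$, or totality) and use $2$-dimensional hyperimmunity of $\Hc_s$ relative to $X\oplus Z$ together with the merging lemma to produce an extension forcing the desired conclusion. Running all diagonalization requirements in a sufficiently generic filter yields an infinite $G_\F$ that $f$-avoids $p$ and preserves every $\Hc_s$ as $2$-dimensional $G_\F\oplus Z$-hyperimmune, proving the theorem. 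The only nontrivial ingredient beyond a direct translation from the disjunctive proof is the $\omega$-version of the $\WKL$-preservation result used in the negative branch of the merging lemma; I expect this to follow by a verbatim generalization of the Liu--Patey argument, applying the basis-theoretic construction to a single generic and verifying preservation for each $\Hc_s$ separately.
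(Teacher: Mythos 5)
There is a genuine gap, and it sits exactly at the point your plan waves through: the diagonalization lemma (the analogue of \Cref{lem:generic-hyperimmune}) does not only need the two one-sided facts you prove (positive answer yields an extension forcing $\varphi$, negative answer yields an extension forcing $\neg\varphi$); in the case where $\psi(n)$ converges but $\psi(n;m)$ diverges, it needs a \emph{simultaneous} merging step, namely an extension forcing $\Phi_e^G(n)\!\downarrow\,\subseteq \psi(n)$ \emph{and} the negation of the second $\Sigma^0_1$ statement at once (this is the first bullet of \Cref{lem:merging}, resp.\ \Cref{lem:nd-merging}); forcing only the negation gives \qt{$\Phi_e^G(n)\not\subseteq\psi(n)$ or $\Phi_e^G(n,m)\!\uparrow$}, which satisfies neither partiality nor meeting $\Hc_s$, and you cannot recover it sequentially because the negative answer taken at $c$ does not persist once you lengthen the stem. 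Your single forcing question cannot prove this simultaneous version. To build the extension one takes a counterexample coloring $h$ from the $\Pi^0_1$ class witnessing the negative answer, shrinks the reservoir to an $h$-homogeneous $Y$ which $f$-stabilizes $[0,n]$ with witness $g'$, and adds a witness $\rho$ of the positive answer to the stem. For the new condition to be valid, $\rho$ must $(f,g')$-avoid $p$ (and one extends via \Cref{lem:combinatorial-extensibility}); for the negative fact to remain forced, every future $\rho\cup\tau$ with $\tau\subseteq Y$ must $(f,h)$-avoid $p$, and \Cref{lem:combinatorial-merging-for-color} requires $\rho$ to be $h$-homogeneous (hypothesis (1)) \emph{and} $g'$-homogeneous (to get the constant cross-colors of hypothesis (2)). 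With your question you can instantiate the universal coloring either by $g'$ (then $\rho$ is $g'$-homogeneous and $(f,g')$-avoids $p$, but nothing makes it $h$-homogeneous, so the merging lemma fails and $h$'s guarantee does not apply) or by $h$ (then validity fails, since $(f,h)$-avoidance says nothing about $(f,g')$-avoidance), but not both. This is precisely why the paper introduces \emph{two} forcing questions in the non-disjunctive setting: a strong one $\qvdash'$ quantifying over \emph{pairs} of colorings $(h_0,h_1)$ and demanding $\rho$ homogeneous for both and $(f,h_0)$-avoiding $p$ (used for positive answers, instantiated with $h_0=g'$ and $h_1=h$), and a weak one $\qvdash$ quantifying over a single coloring with no homogeneity requirement (used for negative answers, so that the $\Pi^0_1$ class of counterexamples matches what is needed to force $\neg\psi$ against arbitrary $h$-homogeneous tails). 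The merging hypothesis on $p$ is what makes \Cref{lem:combinatorial-merging-for-color} applicable for whichever color $Y$ ends up homogeneous for — note that in your write-up merging is never actually used where you invoke it: your positive branch should appeal to \Cref{lem:combinatorial-extensibility} (irreducibility), not to \Cref{lem:combinatorial-merging-for-color}, whose hypothesis (1) fails for $E=\sigma$ since the stem need not be homogeneous.

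So the overall architecture (single-sided Mathias forcing, relativization, the $\omega$-version of the $\WKL$ preservation result from Liu and Patey, which the paper notes is already proved in \cite[Corollary 2.7]{liu2022reverse}) is the right adaptation, but you must replace your single forcing question by the two questions above and prove the full merging lemma (\Cref{lem:nd-merging}); without it the key case of the diagonalization does not go through.
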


Fix a countable collection 2-dimensional hyperimmune bi-families $\Hc_0, \Hc_1, \dots$, and a computable coloring $f : [\NN]^2 \to 2$.
We shall construct an infinite set $G$ such that $G$ $f$-avoids $p$ and for every~$n$, $\Hc_n$ is 2-dimensional $G$-hyperimmune. 

\begin{definition}
A \emph{condition} is a 2-tuple $(\sigma,X)$ where \begin{itemize}
    \item $(\sigma,X)$ is a Mathias condition;
    \item $X$ $f$-stabilizes~$[0, |\sigma|-1]$ with some witness $g : \sigma \to 2$;
    \item $\sigma$ $(f, g)$-avoids~$p$;
    \item $\Hc_n$ is 2-dimensional $X$-hyperimmune for every~$n \in \NN$.
\end{itemize}
\end{definition}
The order over conditions is the usual Mathias extension.
Every sufficiently generic filter~$\F$ induces a set $G_\F = \bigcup_{(\tau,Y) \in \F}\tau$.
The following standard lemma states that for every sufficiently generic filter~$\F$, the set $G_\F$ is infinite.

\begin{lemma}\label[lemma]{lem:nd-generic-infinite}
    Let $c = (\sigma,X)$ be a condition and $x \in X$. There exists $Y \subseteq X$ such that $(\sigma \cup \{x\},  Y)$ is an extension of~$c$. 
\end{lemma}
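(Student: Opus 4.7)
The plan is to mimic the proof of \Cref{lem:omega-hyp-generic-infinite}, with the computably-dominated hypothesis replaced by preservation of the 2-dimensional hyperimmunities. Let $c = (\sigma, X)$ be a condition with witness $g : [0,|\sigma|-1] \to 2$, and fix $x \in X$.

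First, I would choose $Y \subseteq X \cap (x, \infty)$ to be an infinite $X$-computable set on which $f(x, \cdot)$ is constant, say with value $i < 2$, which exists since $f$ is 2-valued. Extend $g$ to a witness $g' : [0, x] \to 2$ by setting $g'(x) = i$; then $Y$ $f$-stabilizes $[0, x]$ with witness $g'$. Next, I would verify that $\sigma \cup \{x\}$ $(f, g')$-avoids $p$: since $p$ is divergent it has length at least~3, so the singleton $\{x\}$ trivially $(f, g')$-avoids $p$ (there is no subset of size $|p|-1 \geq 2$ realizing $p^-$). Combined with the fact that $\sigma$ $(f, g)$-avoids $p$ and hence $(f, g')$-avoids $p$ (the condition is vacuous on the new point), \Cref{lem:combinatorial-extensibility} applied with $E = \sigma$ and $F = \{x\}$ (using the irreducibility of $p$) yields that $\sigma \cup \{x\}$ $(f, g')$-avoids $p$.

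It remains to check that each $\Hc_n$ is 2-dimensional $Y$-hyperimmune. This step, which replaces the use of computable domination in \Cref{lem:omega-hyp-generic-infinite}, is the only substantive difference, and it is in fact immediate: since $Y \leq_T X$, every $Y$-computable bi-array is $X$-computable, and therefore meets $\Hc_n$ by the hypothesis that $\Hc_n$ is 2-dimensional $X$-hyperimmune. Thus $(\sigma \cup \{x\}, Y)$ is a valid condition extending $c$. There is no real obstacle here; the lemma is the non-disjunctive, $\omega$-many analogue of \Cref{lem:generic-infinite}, and the combinatorial work (irreducibility and divergence giving $(f, g')$-avoidance) has already been packaged into \Cref{lem:combinatorial-extensibility}.
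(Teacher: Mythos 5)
Your proof is correct and is exactly the adaptation the paper intends (it omits the proof as a "straightforward adaptation" of \Cref{lem:omega-hyp-generic-infinite} and \Cref{lem:generic-infinite}): thin $X$ to stabilize the new element, use divergence to get $|p|\geq 3$ so the singleton vacuously $(f,g')$-avoids $p$, apply \Cref{lem:combinatorial-extensibility}, and note that $Y\leq_T X$ trivially preserves 2-dimensional $X$-hyperimmunity of each $\Hc_n$. The only nitpick is that your $Y$ literally stabilizes $\sigma\cup\{x\}$ rather than all of $[0,x]$ (further finite thinning would give the interval), but this matches the paper's own conventions and does not affect the argument.
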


We now define two non-disjunctive, $\Sigma^0_1$-preserving and $\Sigma^0_1$-compact forcing questions, which satisfy a non-disjunctive version of the merging lemma (\Cref{lem:nd-merging}). As mentioned, we only state the definitions and lemmas without proofs.

\begin{definition}
Let $\varphi$ be a $\Sigma^0_1$-formula, $c = (\sigma, X)$ be a condition. We define the forcing question $\qvdash'$ as follows :
    $c \qvdash' \varphi(G)$ if for all pairs of 2-colorings $h_0 : \N \to 2$ and $h_1 : \N \to 2$, there exists a finite $h_0$-homogeneous and $h_1$-homogeneous set $\rho \subseteq X$ which $(f,h_0)$-avoids $p$ such that $\varphi(\sigma \cup \rho)$ holds.
\end{definition}

\begin{lemma}\label[lemma]{lem:nd-question-i-preserving}
Let $c =(\sigma,X)$ be a condition and $\varphi$ be a $\Sigma_1^0$ formula. The sentence \qt{$c \qvdash' \varphi(G)$} is $\Sigma_1^0(X)$.
\end{lemma}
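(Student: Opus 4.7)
The plan is to reproduce, in the non-disjunctive setting, the compactness argument used in the proof of \Cref{lem:question-i-preserving}. Write $\varphi \equiv \exists x\,\psi(G, x)$ where $\psi$ is $\Delta^0_0$. Unfolding the definition of $\qvdash'$, the statement \qt{$c \qvdash' \varphi(G)$} asserts that for every pair of total colorings $h_0, h_1 : \NN \to 2$, there exist $x \in \NN$ and a finite set $\rho \subseteq X$ that is $h_0$-homogeneous and $h_1$-homogeneous, $(f, h_0)$-avoids $p$, and satisfies $\psi(\sigma \cup \rho, x)$.

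The key step is to show this is equivalent to the following $\Sigma^0_1(X)$ statement: \emph{there exists $t \in \NN$ such that for every pair of 2-colorings $h_0, h_1 : [0, t) \to 2$, there are $x < t$ and a finite set $\rho \subseteq X \cap [0, t)$ with the above properties}, where now the homogeneity and $(f, h_0)$-avoidance conditions are checked with respect to the finite restrictions of $h_0, h_1$. Complexity-wise, the outer existential over $t$ is the only unbounded quantifier; the inner condition requires only finitely many queries to $X$ and finite combinatorial checks, so the formulation is $\Sigma^0_1(X)$ as desired.

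The equivalence is established via König's lemma. The forward implication is immediate: given such a bound $t$, any pair of total colorings $h_0, h_1$ can be restricted to $[0, t)$ to supply a witness $(\rho, x)$, which remains valid for $h_0, h_1$ themselves since $\rho \subseteq [0, t)$ and the required combinatorial properties of $\rho$ depend only on $h_0 \uh_t$ and $h_1 \uh_t$. For the converse, suppose no such $t$ exists. Then for every $t$ there is a pair $(h_0 \uh_t, h_1 \uh_t)$ admitting no valid witness at stage $t$. Consider the tree $T \subseteq (2 \times 2)^{<\NN}$ consisting of all such pairs. By assumption each level of $T$ is non-empty, and $T$ is finitely branching, so by König's lemma $T$ admits an infinite path, yielding a pair of total 2-colorings $(h_0, h_1)$ for which no witness exists at any finite stage. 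But any candidate witness $(\rho, x)$ for this total pair involves only finitely many elements and would already appear at some stage $t > \max(\rho \cup \{x\})$, contradicting $c \qvdash' \varphi(G)$.

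The only subtlety worth checking is that the properties \qt{$\rho$ is $h_0$-homogeneous}, \qt{$\rho$ is $h_1$-homogeneous}, and \qt{$\rho$ $(f, h_0)$-avoids $p$} depend only on the restrictions of $h_0, h_1$ to $[0, t)$ whenever $\rho \subseteq [0, t)$; this is immediate from the definitions, since all three properties quantify only over elements of $\rho$. With this observation the compactness step goes through, yielding the claimed $\Sigma^0_1(X)$ complexity.
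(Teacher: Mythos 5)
Your proof is correct and follows essentially the same route as the paper: the paper omits the proof of this lemma as a straightforward adaptation of \cref{lem:question-i-preserving}, whose proof is exactly the compactness reduction to a bounded statement \qt{there exists $t$ such that for all $h_0, h_1 : [0,t) \to 2$ there exist $x < t$ and $\rho \subseteq X \cap [0,t)$ with the required properties}. You merely make the compactness step explicit via K\"onig's lemma (together with the observation that homogeneity and $(f,h_0)$-avoidance depend only on the restrictions of $h_0, h_1$ to $\rho$), which is exactly what the paper's phrase \qt{by a compactness argument} elides.
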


\begin{lemma}\label[lemma]{lem:nd-question-i-compact}
    For every condition $c$ and every $\Delta^0_0$-formula $\varphi(G,x)$, if $c \qvdash' \exists x \varphi(G,x)$, then there exists $t \in \NN$ such that $c \qvdash' \exists x \leq t \ \varphi(G,x)$.
\end{lemma}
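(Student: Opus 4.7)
The plan is to reuse verbatim the compactness bound extracted inside the proof of Lemma \ref{lem:nd-question-i-preserving}. That earlier proof, like its analogue Lemma \ref{lem:question-i-compact} in the disjunctive setting, is really a König's lemma / compactness observation: the condition $c \qvdash' \exists x\,\varphi(G,x)$ is equivalent to the existence of some bound $t \in \NN$ such that for every pair of 2-colorings $h_0, h_1 : [0,t] \to 2$, there is a finite $h_0$-homogeneous and $h_1$-homogeneous set $\rho \subseteq X \cap [0,t]$ which $(f,h_0)$-avoids $p$ and a witness $x$ such that $\varphi(\sigma \cup \rho, x)$ holds. The point of this reformulation is precisely that both the finite set $\rho$ and the existential witness $x$ live in a bounded region determined by $t$.

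More concretely, I would argue by contrapositive. Assume that for every $t \in \NN$ we have $c \nqvdash' \exists x \leq t\,\varphi(G,x)$. Then for each $t$ there is a pair of 2-colorings $h_0^t, h_1^t : \NN \to 2$ such that no finite $h_0^t$-homogeneous, $h_1^t$-homogeneous set $\rho \subseteq X$ which $(f,h_0^t)$-avoids $p$ admits a witness $x \leq t$ with $\varphi(\sigma \cup \rho, x)$. Taking a König's lemma / compactness limit in the space $(2^{\NN})^2$ of pairs of 2-colorings, one extracts a single pair $(h_0, h_1)$ such that for every finite $h_0$-homogeneous, $h_1$-homogeneous $\rho \subseteq X$ which $(f,h_0)$-avoids $p$, no $x$ at all satisfies $\varphi(\sigma \cup \rho, x)$. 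This contradicts $c \qvdash' \exists x\,\varphi(G,x)$.

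There is no real obstacle here: the argument is a direct transcription of the proof of Lemma \ref{lem:question-i-compact}, with the single forcing question $\qvdash'$ replacing $\qvdash_i$ and a single forbidden pattern $p$ replacing the two patterns $p_0, p_1$. The only mild subtlety is remembering that the ability to extract a uniform bound $t$ on the witness $x$ relies on $\varphi$ being $\Delta^0_0$, so that the predicate is decidable once $\rho$ and $x$ are fixed; this is exactly what makes the compactness argument go through on the product space of 2-colorings restricted to the finite initial segment $[0,t]$.
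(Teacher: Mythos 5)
Your proposal is correct and matches the paper's (omitted, "straightforward adaptation") proof: exactly as in \Cref{lem:question-i-compact}, one takes the bound $t$ furnished by the compactness argument behind \Cref{lem:nd-question-i-preserving}, and since that bound simultaneously confines $\rho$ and the existential witness $x$ to $[0,t]$, the bounded forcing question holds. Your contrapositive elaboration via compactness of the space of pairs of colorings is just this same argument spelled out, and it is sound because the homogeneity and $(f,h_0)$-avoidance requirements on a finite $\rho$ depend only on finitely much of $(h_0,h_1)$.
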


\begin{definition}
    Let $\varphi(G)$ be a $\Sigma^0_1$-formula and $c = (\sigma, X)$ be a condition. We define the forcing question $\qvdash$ as follows :
    $c \qvdash \varphi(G,x)$ if for all 2-coloring $h : \N \to 2$, there exists a finite set $\rho \subseteq X$ which $(f,h)$-avoids $p$ such that $\varphi(\sigma \cup \rho)$ holds.
\end{definition}

\begin{lemma}\label[lemma]{lem:nd-question-preserving}
Let $c =(\sigma,X)$ be a condition and $\varphi$ be a $\Sigma_1^0$ formula. The sentence \qt{$c \qvdash \varphi(G)$} is $\Sigma_1^0(X)$.
\end{lemma}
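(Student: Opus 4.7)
The plan is to mirror the proof of \Cref{lem:question-preserving} in the non-disjunctive setting, using a standard compactness argument to replace the quantification over all 2-colorings $h : \NN \to 2$ by a bounded quantification over 2-colorings $h : [0, t] \to 2$.

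First, I would write $\varphi \equiv \exists x\, \psi(G, x)$ for some $\Delta^0_0$-formula $\psi$. By definition, $c \qvdash \varphi(G)$ holds iff for every 2-coloring $h : \NN \to 2$, there exist $x \in \NN$ and a finite set $\rho \subseteq X$ such that $\rho$ $(f, h)$-avoids $p$ and $\psi(\sigma \cup \rho, x)$ holds. I would then claim this is equivalent to the $\Sigma^0_1(X)$ statement: there exists $t \in \NN$ such that for every 2-coloring $h : [0, t] \to 2$, there exist $x < t$ and a finite set $\rho \subseteq X \cap [0, t]$ such that $\rho$ $(f, h)$-avoids $p$ and $\psi(\sigma \cup \rho, x)$ holds. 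Once $t$ is fixed, the inner universal quantification ranges over finitely many colorings and the witnesses $\rho$ and $x$ are sought below $t$, so the inner statement is $\Delta^0_0(X)$.

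The forward direction of the equivalence uses compactness: consider the tree of 2-colorings $h : [0, n] \to 2$ that admit no witness $(\rho, x)$ with $\rho \subseteq X \cap [0, n]$, $x < n$, $\rho$ $(f, h)$-avoiding $p$ and $\psi(\sigma \cup \rho, x)$ holding. This is a $\Pi^0_1(X)$-binary tree; if it were infinite, by weak König's lemma it would have an infinite path, yielding a 2-coloring $h : \NN \to 2$ that witnesses $c \nqvdash \varphi(G)$. Hence the tree is finite, which provides the desired bound $t$. The reverse direction is immediate, since any 2-coloring $h : \NN \to 2$ restricts to $[0, t]$, and the witness $\rho \subseteq X \cap [0, t]$ obtained for that restriction only depends on $h \uh [0, t]$ and hence remains a valid witness for $h$.

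I do not foresee any genuine obstacle: the proof is a routine transcription of the argument given for the disjunctive analogue \Cref{lem:question-preserving}, the only simplification being that the side index $i < 2$ and the two-formula disjunction disappear. The main point to verify is that the property \qt{$\rho$ $(f, h)$-avoids $p$} depends only on $h \uh \rho$, which holds by the very definition of $(f, h)$-avoidance, so the compactness reduction to colorings of $[0, t]$ is legitimate.
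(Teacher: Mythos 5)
Your proof is correct and follows essentially the same route as the paper: the paper omits the proof of this lemma as a "straightforward adaptation" of the disjunctive case (\cref{lem:question-preserving}), which is exactly the compactness argument you give, reducing the quantification over all $h : \NN \to 2$ to colorings of $[0,t]$ with bounded witnesses $\rho \subseteq X \cap [0,t]$ and $x < t$. Your added observations (that $(f,h)$-avoidance of $\rho$ depends only on $h\uh\rho$, and the K\H{o}nig's-lemma justification of the bound) just make explicit what the paper leaves implicit.
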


\begin{lemma}
    For every condition $c$ and every $\Delta^0_0$-formula $\varphi(G,x)$, if $c \qvdash \exists x \varphi(G,x)$, then there exists $t \in \NN$ such that $c \qvdash \exists x<t \,\varphi(G,x) $.
\end{lemma}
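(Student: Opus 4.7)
The plan is to mimic the argument used for \Cref{lem:nd-question-i-compact}, exploiting the compactness of Cantor space. Unfolding the definition of $\qvdash$, the hypothesis $c \qvdash \exists x \, \varphi(G,x)$ says that for every 2-coloring $h : \NN \to 2$ there exist a finite $\rho \subseteq X$ which $(f,h)$-avoids $p$ and some $x \in \NN$ such that $\varphi(\sigma \cup \rho, x)$ holds. For each fixed $h$, the witnessing assertion is $\Sigma^0_1$, and crucially, whether $\rho$ $(f,h)$-avoids $p$ depends only on $h$ restricted to $\rho$ (since $(f,h)$-avoidance is a finitary condition). So any witness $(\rho, x)$ only constrains $h$ on a finite initial segment.

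First I would recast the failure of the conclusion as the non-emptiness of a $\Pi^0_1$ class in~$\cs$. Let $\C$ be the class of colorings $h \in \cs$ such that for every finite $\rho \subseteq X$ and every $x \in \NN$, either $\rho$ does not $(f,h)$-avoid~$p$, or $\neg\varphi(\sigma \cup \rho, x)$ holds. By hypothesis, $c \qvdash \exists x \, \varphi(G,x)$ is exactly the statement $\C = \emptyset$. By compactness of~$\cs$ (equivalently, by applying K\"onig's lemma to the tree of finite approximations of elements of~$\C$), emptiness of $\C$ can already be witnessed at a finite level: there is some $t \in \NN$ such that for every $h : [0,t] \to 2$, there exist a finite $\rho \subseteq X \cap [0,t]$ and some $x < t$ such that $\rho$ $(f,h)$-avoids~$p$ and $\varphi(\sigma \cup \rho, x)$ holds.

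The same bound $t$ then witnesses $c \qvdash \exists x < t \, \varphi(G,x)$: given any 2-coloring $h : \NN \to 2$, apply the finite-level statement to $h \uh [0,t]$ to obtain $\rho \subseteq X$ and $x < t$ fulfilling the requirements of \Cref{def:disjunctive-question} in its non-disjunctive analogue.

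There is no serious obstacle here: the argument is essentially a transcription of the corresponding compactness lemma for $\qvdash'$ stated just above, and the only point requiring minor care is to note that the bound $t$ can be chosen to simultaneously control both the size of the potential witness set $\rho \subseteq X \cap [0,t]$ and the existential witness~$x < t$, since both appear under a $\Sigma^0_1$ quantifier and the domain of $h$ that they constrain is finite.
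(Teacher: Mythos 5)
Your proof is correct and is essentially the paper's intended argument: the paper omits this proof as a routine adaptation of the Section~4 analogue, where the compactness argument in \Cref{lem:nd-question-preserving} already produces a level $t$ bounding simultaneously the witness set $\rho \subseteq X \cap [0,t]$ and the existential witness $x < t$, and that same $t$ witnesses $c \qvdash \exists x < t\, \varphi(G,x)$. Your reformulation via emptiness of the $\Pi^0_1$ class $\C$ (noting that $(f,h)$-avoidance of $\rho$ depends only on $h$ restricted to $\rho$) is just a repackaging of that compactness step, so nothing further is needed.
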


The following merging lemma serves the same purpose as in every previous section and chapter.
Simply note that Liu and Patey~\cite[Corollary 2.7]{liu2022reverse} actually proved that $\WKL$ preserves $\omega$ 2-dimensional hyperimmunities, so the computability-theoretic constraint on the reservoirs of a conditions can be preserved with the same combinatorics.

\begin{lemma}\label[lemma]{lem:nd-merging}
    Let $\varphi$ and $\psi$ be two $\Sigma^0_1$ formulas, and let $c \in \PP$. 

    \begin{itemize}
        \item If  $c \qvdash' \varphi(G)$ and $c \nqvdash \psi(G)$,
    then there exists $d \leq c$ such that $d$ forces $\varphi(G) \wedge \neg \psi(G)$ ;
        \item If $c \nqvdash' \varphi(G)$, then there exists $d \leq c$ such that $d \mbox{ forces } \neg \varphi(G)$;
        \item If $c \qvdash \psi(G)$, then there exists  $d \leq c$ such that $d \mbox{ forces }  \psi$.
    \end{itemize}
\end{lemma}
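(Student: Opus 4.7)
The plan is to mirror the proof of \Cref{lem:merging} in this non-disjunctive setting, exploiting the fact that $p$ is simultaneously $0$-merging and $1$-merging in order to handle, with a single condition, both possible $h$-homogeneity colors of the reservoir. The three items correspond one-to-one to those of \Cref{lem:merging}, with the side-choice collapsed.

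Item~(3) is essentially the third clause of \Cref{lem:merging} stripped of side-choice: from $c \qvdash \psi(G)$ the $\Sigma^0_1$-compactness lemma yields a bound $\ell$; I then take an infinite $X$-computable $Y \subseteq X \setminus [0,\ell]$ which $f$-stabilizes $[0,\ell]$ with some witness $g'$, specialize to $h = g'$ to extract a finite $\rho \subseteq X \cap [0,\ell]$ that $(f,g')$-avoids $p$ with $\psi(\sigma \cup \rho)$, and set $d = (\sigma \cup \rho, Y)$, whose validity is ensured by \Cref{lem:combinatorial-extensibility}. Item~(2) is the adaptation of the second clause: from $c \nqvdash' \varphi(G)$ I obtain a $\Pi^0_1(X)$ class of witnessing pairs $(h_0,h_1)$ and invoke the fact that $\WKL$ preserves $\omega$ 2-dimensional hyperimmunities \cite[Corollary 2.7]{liu2022reverse} to pick such a pair keeping every $\Hc_n$ 2-dim $X \oplus h_0 \oplus h_1$-hyperimmune. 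Then I choose $a,b<2$ and an infinite $Y \subseteq X \cap \{x : h_0(x)=a \land h_1(x)=b\}$, further thinned to $f$-stabilize the initial segment. The extension $d = (\sigma, Y)$ forces $\neg \varphi(G)$ because every finite $\tau \subseteq Y$ is $h_0$- and $h_1$-homogeneous, and divergence of $p$ makes $\tau$ automatically $(f,h_0)$-avoid $p$.

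The substantive item is~(1). I again start from a witnessing $h$ of $c \nqvdash \psi(G)$ selected by $\WKL$ preservation so that each $\Hc_n$ remains 2-dim $X \oplus h$-hyperimmune. Using compactness of $c \qvdash' \varphi(G)$ I fix $\ell$, and pick an infinite $X \oplus h$-computable $Y \subseteq X \setminus [0,\ell]$ which is $h$-homogeneous --- say for color $b$ --- and $f$-stabilizes $[0,\ell]$ with witness $g'$. Feeding $h_0 = g'$ and $h_1 = h$ into the definition of $\qvdash'$ yields a finite $\rho \subseteq X \cap [0,\ell]$ that is $g'$- and $h$-homogeneous, $(f,g')$-avoids $p$, and satisfies $\varphi(\sigma \cup \rho)$. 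The candidate is $d = (\sigma \cup \rho, Y)$, a condition by \Cref{lem:combinatorial-extensibility}, forcing $\varphi(G)$ by construction.

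The main obstacle --- and the pay-off of the merging hypothesis --- lies in verifying $d \Vdash \neg \psi(G)$. For any finite $\tau \subseteq Y$ such that $\sigma \cup \rho \cup \tau$ extends $d$, the required fact is that $\rho \cup \tau$ $(f, h)$-avoids $p$, for then the defining property of $h$ yields $\neg \psi(\sigma \cup \rho \cup \tau)$. Let $a$ be the color of $\rho$ under $h$. If $a = b$, then $\rho \cup \tau$ is $h$-homogeneous and divergence of $p$ suffices. If $a \neq b$, I apply \Cref{lem:combinatorial-merging-for-color} with $E = \rho$, $F = \tau$, $g = h$, $i = 1-b$: condition~(1) holds by the homogeneity colors, condition~(2) holds because $\rho$ is $g'$-homogeneous and $Y$ $f$-stabilizes $[0,\ell]$ with witness $g'$, and condition~(3) is inherited from $\sigma \cup \rho \cup \tau$ $f$-avoiding $p$. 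Crucially, this application requires $p$ to be $(1-b)$-merging, and since $b$ is determined by $h$ and cannot be controlled in advance, one genuinely needs $p$ to be both $0$-merging and $1$-merging --- the precise reason the non-disjunctive variant demands full merging rather than one-sided merging.
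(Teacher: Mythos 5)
Your proof is correct and is exactly the adaptation of \Cref{lem:merging} that the paper has in mind (the paper omits the proof as a straightforward adaptation of \Cref{sect:2dim-hyp}, noting only that \cite[Corollary 2.7]{liu2022reverse} gives preservation of $\omega$ 2-dimensional hyperimmunities for the reservoirs). In particular, your analysis of item (1) — applying \Cref{lem:combinatorial-merging-for-color} with $i = 1-b$ where $b$ is the uncontrollable $h$-homogeneity color of the reservoir, which is precisely why $p$ must be both $0$-merging and $1$-merging — matches the intended argument.
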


Finally, the existence of two $\Sigma^0_1$-preserving, $\Sigma^0_1$-compact forcing questions satisfying the previous merging lemma enables to prove the following diagonalization lemma:

\begin{lemma}\label[lemma]{lem:nd-generic-hyperimmune}
    Let $\Hc$ be a 2-dimensional hyperimmune bi-family, $c =(\sigma,X) \in \PP$ and $\Phi_{e}$ be a 2-array functional. There exists an extension~$d$ of $c$ forcing either $\Phi_{e}^{G}$ to be partial or $\Hc$ to intersect $\Phi_{e}^{G}$.
\end{lemma}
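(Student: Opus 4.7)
The plan is to follow the blueprint of \Cref{lem:generic-hyperimmune} from the disjunctive setting, but using the non-disjunctive forcing questions $\qvdash'$ and $\qvdash$ in place of their disjunctive counterparts. Since we no longer need to split across two sides, the construction becomes simpler: we build a bi-array $\Sigma^0_1(X)$ in $c$ from the forcing questions, ask whether $\Hc$ intersects it, and use the merging lemma (\Cref{lem:nd-merging}) to translate the outcome into a forcing extension.

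Concretely, I would define two partial $X$-computable functions
\begin{itemize}
\item $\psi(n)$: search for a finite set $F_n$ such that $c \qvdash' \Phi_e^G(n)\downarrow \subseteq F_n$, and return the first one found;
\item $\psi(n;m)$: search for a finite set $F_{n,m}$ such that $c \qvdash (\Phi_e^G(n)\downarrow \subseteq \psi(n)) \land (\Phi_e^G(n,m)\downarrow \subseteq F_{n,m})$, and return the first one found.
\end{itemize}
By \Cref{lem:nd-question-i-preserving} and \Cref{lem:nd-question-preserving}, both relations are $\Sigma^0_1(X)$ uniformly in their arguments, so $\psi(n)$ and $\psi(n;m)$ are partial $X$-computable. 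By the $\Sigma^0_1$-compactness of the two forcing questions, $\psi(n)\downarrow$ iff $c \qvdash' \Phi_e^G(n)\downarrow$, and similarly for $\psi(n;m)$.

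Then a standard case split handles all possibilities. \textbf{Case 1:} some $\psi(n)\uparrow$. Then $c \nqvdash' \Phi_e^G(n)\downarrow$, and by the second clause of \Cref{lem:nd-merging} some extension $d \leq c$ forces $\Phi_e^G(n)\uparrow$, so $\Phi_e^G$ is partial. \textbf{Case 2:} $\psi(n)\downarrow$ but $\psi(n;m)\uparrow$ for some $n,m$. Then $c \qvdash' \Phi_e^G(n)\downarrow \subseteq \psi(n)$ while $c \nqvdash (\Phi_e^G(n)\downarrow \subseteq \psi(n)) \land (\Phi_e^G(n,m)\downarrow \subseteq F)$ for any $F$; the first clause of \Cref{lem:nd-merging} yields an extension $d \leq c$ forcing the first conjunct and the negation of the second, which collectively force $\Phi_e^G(n,m)\uparrow$. \textbf{Case 3:} both $\psi$ and $\psi(\cdot\,;\,\cdot)$ are total. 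Then $(\psi(n), \psi(n;m))_{n,m}$ is an $X$-computable bi-array, and 2-dimensional $X$-hyperimmunity of $\Hc$ gives $n,m$ with $(\psi(n), \psi(n;m)) \in \Hc$. Since $\psi(n;m)\downarrow$ witnesses $c \qvdash (\Phi_e^G(n)\downarrow \subseteq \psi(n)) \land (\Phi_e^G(n,m)\downarrow \subseteq \psi(n;m))$, the third clause of \Cref{lem:nd-merging} produces an extension $d \leq c$ forcing $\Phi_e^G(n) \subseteq \psi(n)$ and $\Phi_e^G(n,m) \subseteq \psi(n;m)$, whence $(\Phi_e^G(n), \Phi_e^G(n,m)) \in \Hc$ by downward closure of the bi-family.

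I do not expect a serious obstacle, since the whole argument is a non-disjunctive transcription of the proof of \Cref{lem:generic-hyperimmune}. The only delicate point is the bookkeeping in Case 2, where one must check that forcing the negation of the conjunction, given that its first conjunct is forced positively, indeed forces $\Phi_e^G(n,m)\uparrow$; this follows by noting that if $\Phi_e^G(n,m)\downarrow$ to any value, that value would be contained in some finite $F$, contradicting the forced negation.
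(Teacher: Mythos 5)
Your proposal is correct and is exactly the argument the paper intends: the paper omits this proof as a \qt{straightforward adaptation} of \Cref{lem:generic-hyperimmune}, and your non-disjunctive transcription (defining $\psi(n)$ via $\qvdash'$, $\psi(n;m)$ via $\qvdash$, the three-case split, and the appeal to \Cref{lem:nd-merging} together with $\Sigma^0_1$-compactness to handle Case 2) matches that blueprint step for step.
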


As mentioned, the notion of forcing being non-disjunctive, the requirements for preserving each 2-dimensional hyperimmunity of each bi-family can be satisfied independently, without resorting to a pairing argument.
We are now ready to prove \Cref{thm:non-disjunctive-2-dim-hyp-preserves}.

\begin{proof}[Proof of \Cref{thm:non-disjunctive-2-dim-hyp-preserves}]
Fix a countable collection of 2-dimensional hyperimmune bi-families $\Hc_0, \Hc_1, \dots$ and a computable coloring $f : [\NN]^2 \to 2$.
Consider a sufficiently generic filter $\F$ for the associated notion of forcing. By \Cref{lem:nd-generic-infinite},  $G_\F$ is an infinite set, and, by definition of a condition $G_\F$ $f$-avoids $p$. Finally, by \Cref{lem:nd-generic-hyperimmune}, $\Hc_n$ is $2$-dimensional $G$-hyperimmune for every~$n \in \NN$.
This completes the proof of \Cref{thm:non-disjunctive-2-dim-hyp-preserves}.
\end{proof}


\begin{corollary}\label[corollary]{cor:nd-irreducible-divergent-merging-subpattern-preserves}
Let $p$ be a pattern containing an irreducible, merging and divergent sub-pattern.
Then $\RT^2_2(p)$ preserves $\omega$ 2-dimensional hyperimmunities.
\end{corollary}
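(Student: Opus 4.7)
The plan is to combine \Cref{thm:non-disjunctive-2-dim-hyp-preserves} with \Cref{lem:sub-pattern-implication}, exactly in the spirit of \Cref{cor:sub-pattern-preserves-omega-hyperimmunities}. Since all the combinatorial and forcing work has already been carried out in \Cref{thm:non-disjunctive-2-dim-hyp-preserves}, the corollary is obtained by a simple transfer argument along the sub-pattern relation.

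In detail, let $q$ be the sub-pattern of~$p$ which is irreducible, merging, and divergent, guaranteed by the hypothesis. Fix a countable collection of 2-dimensional hyperimmune bi-families $\Hc_0, \Hc_1, \dots$ and a computable instance $f : [\NN]^2 \to 2$ of $\RT^2_2(p)$. Viewing $f$ as an instance of $\RT^2_2(q)$, \Cref{thm:non-disjunctive-2-dim-hyp-preserves} produces an infinite set $H$ which $f$-avoids $q$ and such that every $\Hc_n$ remains 2-dimensional $H$-hyperimmune.

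It remains to see that $H$ is a valid $\RT^2_2(p)$-solution to~$f$. This is exactly the content of \Cref{lem:sub-pattern-implication}: since~$q$ is a sub-pattern of~$p$, any finite subset of $H$ which $f$-realizes $p$ would contain a subset $f$-realizing $q$, contradicting the choice of $H$. Hence $H$ $f$-avoids~$p$, and $H$ witnesses preservation of $\omega$ 2-dimensional hyperimmunities for the instance $f$. The relativization to an arbitrary parameter set $Z$ is routine, since \Cref{thm:non-disjunctive-2-dim-hyp-preserves} relativizes. There is no substantive obstacle here; all the difficulty has been absorbed into \Cref{thm:non-disjunctive-2-dim-hyp-preserves}.
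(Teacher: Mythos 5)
Your proof is correct and matches the paper's intended argument: the corollary is obtained exactly by applying \Cref{thm:non-disjunctive-2-dim-hyp-preserves} to the divergent, irreducible, merging sub-pattern $q$ and transferring the solution along \Cref{lem:sub-pattern-implication}, just as in \Cref{cor:sub-pattern-preserves-omega-hyperimmunities} and \Cref{cor:irreducible-divergent-merging-subpattern-preserves}.
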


The following theorem proves the reciprocal in a strong sense: if $\RT^2_2(p)$ does not preserve $\omega$ 2-dimensional hyperimmunities, then it does not even preserve 2 of them.

\begin{theorem}\label[theorem]{thm:omega-2-dim-reciprocal}
Let $p$ be a pattern of size at least~2 such that all of its sub-patterns are either reducible, convergent, or non-merging, and let $f : [\N]^2 \to 2$ be a stable computable coloring. For every infinite set $H$ such that $\Hc_0(f)$ and $\Hc_1(f)$ are both 2-dimensional $H$-hyperimmune, the pattern $p$ strongly $f$-appears in~$H$.
\end{theorem}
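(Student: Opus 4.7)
The plan is to carry out induction on $|p|$, directly mirroring the structure of \Cref{thm:2-dim-reciprocal} while exploiting the fact that both $\Hc_0(f)$ and $\Hc_1(f)$ are now available as 2-dimensional $H$-hyperimmune bi-families rather than just one. By \Cref{rem:2-dim-hyp-is-hyp}, both $A_0$ and $A_1$ are $H$-hyperimmune, so the convergent case of the induction is handled exactly as before: the induction hypothesis together with \Cref{rem:strongly-appears-implies-appears} yields an $H$-computable infinite array of disjoint subsets of~$H$ each $f$-realizing~$p^-$, and the $H$-hyperimmunity of $A_{p(0,|p|-1)}$ extracts one whose elements all have the correct limit color, producing a strong $f$-realization of~$p$.

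For the divergent reducible case I would transcribe verbatim the argument of \Cref{thm:sub-pattern-convergent-reducible-strongly-appears}: that argument only invokes the induction hypothesis on strictly smaller sub-patterns of~$p$ and does not depend on the specific form of hyperimmunity we are preserving.

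The only genuinely new case, and the main technical step, is when $p$ is divergent, irreducible, and non-merging. Here I would pick $i < 2$ such that $p$ fails to be $i$-merging (such $i$ exists by hypothesis on~$p$) and then apply the third-case argument of \Cref{thm:2-dim-reciprocal} with this specific~$i$. Unfolding the failure of $i$-merging gives a non-trivial partition $F \cup G = |p|-1$ with $F < G$ on which $p(\cdot, |p|-1)$ takes the value $i$ on $F$, the value $1-i$ on $G$, and $p$ is constant on $F \times G$. Irreducibility of~$p$ forces this constant value to equal $1-i$, for otherwise $p$ would decompose as $p\uh_{F \cup \{\min G\}} \uplus p\uh_{G \cup \{|p|-1\}}$. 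From an $H$-computable bi-array built out of $f$-realizations of $p\uh_F$ and $p\uh_G$ supplied by the induction hypothesis, the 2-dimensional $H$-hyperimmunity of $\Hc_i(f)$ produces a pair $(R_n, S_{n,m}) \in \Hc_i(f)$, and $R_n \cup S_{n,m}$ strongly $f$-realizes~$p$.

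The conceptual novelty of the statement, which is where one must be attentive rather than in any single combinatorial step, is that the color $i$ witnessing non-merging may depend on the irreducible sub-pattern under consideration, which is precisely why both $\Hc_0(f)$ and $\Hc_1(f)$ must be assumed to be 2-dimensional $H$-hyperimmune. Once this per-case choice of~$i$ is made, no further obstacle arises and the argument terminates as in \Cref{thm:2-dim-reciprocal}.
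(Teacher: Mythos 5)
Your proposal is correct and follows essentially the same route as the paper: the paper's proof is precisely the argument of \Cref{thm:2-dim-reciprocal} repeated verbatim in the convergent and divergent-reducible cases, with the non-merging case handled by choosing, per sub-pattern, a color $i$ witnessing the failure of $i$-merging and invoking 2-dimensional $H$-hyperimmunity of $\Hc_i(f)$. Your closing observation — that the dependence of $i$ on the sub-pattern is exactly why both $\Hc_0(f)$ and $\Hc_1(f)$ must be assumed $H$-hyperimmune — is the same point the paper makes.
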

\begin{proof}
The proof is exactly the same as the one of \Cref{thm:2-dim-reciprocal}, except that in the case analysis, if $p$ is not merging, then it is not $i$-merging for some~$i < 2$, and one exploits 2-dimensional $H$-hyperimmunity of $\Hc_i(f)$. Because the choice of~$i$ depends on the considered sub-pattern, both $\Hc_0(f)$ and $\Hc_1(f)$ must be 2-dimensional $H$-hyperimmune.
\end{proof}

\begin{corollary}\label[corollary]{cor:omega-2-dim-clean-reciprocal}
Let $p$ be a pattern of size at least~2 such that all of its sub-patterns are either reducible, convergent, or non-merging. Then $\RT^2_2(p)$ does not preserve two 2-dimensional hyperimmunities, as witnessed by a stable coloring.
\end{corollary}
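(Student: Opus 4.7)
The plan is to argue by contradiction, following exactly the pattern of the proof of \Cref{cor:sub-pattern-not-2-hyperimmunity} from \Cref{sect:2dim-hyp}, but applied to the 2-dimensional setting and using both bi-families $\Hc_0(f)$ and $\Hc_1(f)$ simultaneously. Suppose toward contradiction that $\RT^2_2(p)$ preserves two 2-dimensional hyperimmunities.

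First, I would invoke \Cref{thm:f-2-dim-hyperimmune} to obtain a stable computable coloring $f : [\NN]^2 \to 2$ such that \emph{both} bi-families $\Hc_0(f)$ and $\Hc_1(f)$ are 2-dimensional hyperimmune. This single coloring will play the role of the computable instance of $\RT^2_2(p)$ witnessing the failure of preservation, so the \qt{as witnessed by a stable coloring} clause is automatic from this step.

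Next, since $\Hc_0(f)$ and $\Hc_1(f)$ are both 2-dimensional hyperimmune (with oracle $Z = \emptyset$) and $f$ is a $Z$-computable instance of $\RT^2_2(p)$, the preservation assumption yields an infinite set $H$ which $f$-avoids $p$ and relative to which both $\Hc_0(f)$ and $\Hc_1(f)$ remain 2-dimensional hyperimmune. This is exactly the hypothesis of \Cref{thm:omega-2-dim-reciprocal}, whose conclusion is that $p$ strongly $f$-appears in $H$.

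Finally, by \Cref{rem:strongly-appears-implies-appears}, since $H$ is infinite, strong $f$-appearance of $p$ in $H$ upgrades to $f$-appearance of $p$ in $H$, contradicting the choice of $H$ as $f$-avoiding $p$. There is no real obstacle here: all the combinatorial work has already been done in \Cref{thm:f-2-dim-hyperimmune} and \Cref{thm:omega-2-dim-reciprocal}, and the corollary is a clean assemblage of these two results with the definition of preservation of two 2-dimensional hyperimmunities.
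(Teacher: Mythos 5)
Your proposal is correct and follows essentially the same route as the paper: contradiction via \Cref{thm:f-2-dim-hyperimmune} to get the stable coloring, preservation to get the avoiding set $H$ with both $\Hc_0(f)$ and $\Hc_1(f)$ 2-dimensional $H$-hyperimmune, and then \Cref{thm:omega-2-dim-reciprocal} together with \Cref{rem:strongly-appears-implies-appears} to contradict $f$-avoidance of $p$. The only cosmetic difference is that the paper phrases the final step contrapositively (from strong $f$-avoidance it deduces that $p$ must contain a divergent, irreducible, merging sub-pattern), which is logically the same argument.
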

\begin{proof}
Suppose for the contradiction that $\RT_2^2(p)$ preserves two 2-dimensional hyperimmunities.
Let $f$ be a stable computable function such that both $\Hc_0(f)$ and  $\Hc_1(f)$ are 2-dimensional hyperimmune. Such a coloring exists by \Cref{thm:f-2-dim-hyperimmune}.
Let $H$ be an infinite set $f$-avoiding $p$ such that $\Hc_0(f)$ and $\Hc_1(f)$ are 2-dimensional $H$-hyperimmune. By \Cref{rem:strongly-appears-implies-appears}, $H$ strongly $f$-avoids~$p$, so by \Cref{thm:omega-2-dim-reciprocal}, $p$ contains a sub-pattern which is divergent, irreducible and merging.   
\end{proof}

\begin{proof}[Proof of \Cref{thm:main-non-disjunctive-2-dim-hyp-preserves}]
Suppose first $p$ contains a divergent, merging and irredu\-cible sub-patterns $q$. Then by \Cref{cor:nd-irreducible-divergent-merging-subpattern-preserves}, $\RT_2^2(p)$ preserves $\omega$ 2-dimensional hyperimmunities.
Suppose now $p$ does not contain such a sub-pattern. Then by \Cref{cor:omega-2-dim-clean-reciprocal}, it does not preserve 2 (and a fortiori $\omega$) 2-dimensional hyperim\-munities.
\end{proof}

\section{The Half Erd\H{o}s-Moser theorem}\label[section]{sec:hem}

There exist multiple known decompositions of $\RT^2_2$ in combinatorially simpler statements.
Cholak, Jockusch and Slaman~\cite{cholak_jockusch_slaman_2001} decomposed $\RT^2_2$ into its stable version ($\SRT^2_2$) and the cohesiveness principle ($\COH$). Then, Bovykin and Weiermann~\cite{bovykin2017strength} split $\RT^2_2$ into the Erd\H{o}s-Moser theorem ($\EM$) and the Chain AntiChain principle ($\CAC$), and Mont\'alban noticed that $\ADS$, which is strictly weaker than~$\CAC$, was actually sufficient. The Chain AntiChain principle  states, for every partial order on~$\NN$, the existence of an infinite chain or antichain.
Both $\ADS$ and $\CAC$ can be formulated in terms of transitivity. 

\begin{proposition}[{Hirschfeldt and Shore~\cite[Section 5]{Hirschfeldt2007CombinatorialPW}}]Over $\RCA_0$,
\begin{itemize}
    \item $\ADS$ is equivalent to the statement \qt{Every coloring $f : [\NN]^2 \to 2$ which is transitive for both colors admits an infinite homogeneous set}
    \item $\CAC$ is equivalent to the statement \qt{Every coloring $f : [\NN]^2 \to 2$ which is transitive for one color admits an infinite homogeneous set}
\end{itemize}
\end{proposition}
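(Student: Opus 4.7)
The plan is to set up a dictionary between $2$-colorings $f : [\NN]^2 \to 2$ and binary relations on~$\NN$. Given a relation $<_R$, set $f_R(x,y) = 0$ iff $x <_R y$ (for $x <_\NN y$); conversely, given $f$ transitive for a color~$c$, set $x <_{R_f} y$ iff $x <_\NN y$ and $f(x,y) = c$. The transitivity of $f$ in color~$c$ translates exactly to the transitivity of $<_{R_f}$, so this dictionary carries strict partial orders to colorings transitive for color~$0$, and linear orders to colorings transitive for both colors.

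For the $\ADS$ equivalence, if $<_L$ is a linear order, the induced $f_L$ is transitive for both colors, each transitivity being a single instance of transitivity of $<_L$. Conversely, given $f$ transitive for both colors, I would define $<_L$ on $\NN$ by $x <_L y \Leftrightarrow f(\min_\NN(x,y),\max_\NN(x,y)) = 0$ when $x <_\NN y$, and $= 1$ when $y <_\NN x$. Totality is immediate, and transitivity follows from a six-case analysis on the relative $\NN$-ordering of a triple $x <_L y <_L z$; each case reduces, either directly or by contraposition, to transitivity of $f$ in color~$0$ or color~$1$. Infinite $0$-homogeneous (resp.\ $1$-homogeneous) sets listed in $\NN$-order are then exactly infinite $<_L$-ascending (resp.\ descending) sequences.

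For the $\CAC$ equivalence, the forward direction is analogous: given $f$ transitive for color $c$, the relation $<_{R_f}$ is a strict partial order, and its infinite chains (resp.\ antichains) coincide with the infinite $c$-homogeneous (resp.\ $(1-c)$-homogeneous) sets for $f$. The converse is the only nontrivial step: given a partial order $<_P$, the coloring $f^+(x,y) = 0 \Leftrightarrow x <_P y$ is transitive for color~$0$, but a $1$-homogeneous set~$H$ only guarantees $x \not<_P y$ for $x <_\NN y$ in~$H$, leaving open whether $y <_P x$ or $x, y$ are incomparable. I would fix this by applying the principle a second time to the coloring $f^-(x,y) = 0 \Leftrightarrow y <_P x$ on~$H$, which is again transitive for color~$0$: a $0$-homogeneous subset is a chain, while a $1$-homogeneous subset combines with the previous $1$-homogeneity for $f^+$ to yield an antichain. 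The main obstacle is precisely this double application, which must be iterated within $\RCA_0$ by recomputing the restricted coloring from $H$; the rest is routine unpacking of transitivity.
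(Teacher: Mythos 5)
Your proposal is correct, and it follows essentially the same route as the source the paper cites (the paper itself gives no proof of this proposition, attributing it to Hirschfeldt and Shore): the dictionary between linear orders and colorings transitive for both colors (with the six-case transitivity check), and between partial orders and colorings transitive for one color. The only non-routine step, deriving $\CAC$ from the coloring statement, is handled correctly by your second application of the principle to $f^-$ on the $1$-homogeneous set $H$; transferring that restricted coloring back to a coloring of $[\NN]^2$ via the increasing enumeration of $H$ is available in $\RCA_0$, so the double application poses no problem.
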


Thus, given a coloring $f : [\NN]^2 \to 2$, $\EM$ states the existence of an infinite set $H \subseteq \NN$ on which $f$ is transitive for both colors, and $\ADS$ applied to $f \uh [H]^2 \to 2$ yields an infinite $f$-homogeneous set. There exists a natural counterpart to this decomposition, involving $\CAC$ and an asymmetric version of the Erd\H{o}s-Moser theorem.

\begin{definition}[Half Erd\H{o}s-Moser theorem]
    The statement $\HEM$ is the following: \qt{For every 2-coloring of pairs $f : [\NN]^2 \to 2$, there exists an infinite set transitive for at least one color.}
\end{definition}

This statement is designed to obtain the following decomposition.

\begin{proposition}
$\RCA_0 \vdash \RT^2_2 \leftrightarrow (\HEM \wedge \CAC)$.
\end{proposition}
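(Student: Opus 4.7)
The plan is to prove the two directions separately, exploiting the reformulation of $\CAC$ given by Hirschfeldt and Shore that is stated just above the proposition.

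For the forward direction $\RT^2_2 \to \HEM \wedge \CAC$: Both implications are essentially immediate. First, any $f$-homogeneous set is trivially transitive for both colors, hence in particular for one color, so $\RT^2_2$ yields $\HEM$. For $\CAC$, given a partial order $(\NN, \leq_P)$, define $f(x,y) = 0$ if $x$ and $y$ are $\leq_P$-comparable and $f(x,y) = 1$ otherwise; an infinite $f$-homogeneous set furnished by $\RT^2_2$ is an infinite chain or antichain.

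For the reverse direction $\HEM \wedge \CAC \to \RT^2_2$: Given an instance $f : [\NN]^2 \to 2$ of $\RT^2_2$, apply $\HEM$ to obtain an infinite set $H \subseteq \NN$ such that $f$ is transitive on $H$ for at least one color $i < 2$. Then the restricted coloring $f \uh [H]^2$, viewed as a 2-coloring of $[\NN]^2$ after an order-preserving bijection with $\NN$, is transitive for color $i$. By the Hirschfeldt--Shore reformulation of $\CAC$ quoted just above the proposition, every 2-coloring of pairs which is transitive for at least one color admits an infinite homogeneous set; applying this to $f \uh [H]^2$ produces an infinite $f$-homogeneous subset of $H$, which is a solution to the original instance of $\RT^2_2$.

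There is no real obstacle here: both directions are routine once one has the Hirschfeldt--Shore characterization in hand. The only minor care needed is to verify that the transfer of $\CAC$ to colorings on an infinite subset $H$ is formalizable in $\RCA_0$, which follows from the fact that $H$ admits a computable (in $H$) order-preserving bijection with $\NN$, so that the restricted instance $f \uh [H]^2$ is an $(f \oplus H)$-computable instance of the Hirschfeldt--Shore form of $\CAC$.
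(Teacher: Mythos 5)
Your proof is correct and, for the non-trivial direction $\HEM \wedge \CAC \to \RT^2_2$, it follows essentially the same route as the paper: apply $\HEM$, transfer the coloring to $\NN$ along the enumeration of the transitive set, and invoke the Hirschfeldt--Shore formulation of $\CAC$ for colorings transitive in one color. The forward direction you spell out (homogeneous sets are transitive, and the comparability coloring reduces $\CAC$ to $\RT^2_2$) is the standard argument the paper leaves implicit.
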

\begin{proof}
Let $f : [\NN]^2 \to 2$ be an instance of $\RT^2_2$.
By $\HEM$, there is an infinite set $X = \{ x_0 < x_1 < \dots \}$ and some color~$i < 2$ such that $X$ is $f$-transitive for color~$i$.
Let $g : [\NN]^2 \to 2$ be defined by $g(a, b) = f(x_a, x_b)$.
In particular, $g$ is transitive for color~$i$, so by Hirschfeldt and Shore~\cite[Theorem 5.2]{Hirschfeldt2007CombinatorialPW}, $\CAC$ proves the existence of an infinite $g$-homogeneous set $Y \subseteq \NN$.
The set $\{ x_a : a \in Y \}$ is $f$-homogeneous.
\end{proof}

This decomposition is arguably slightly less natural than the one in terms of $\EM$ and $\ADS$, but is interesting from the viewpoint of the first-order part of $\RT^2_2$.

The \emph{first-order part} of a second-order theory~$T$ is the set of all the first-order sentences provable by~$T$. Understanding the first-order part of theorems is an important part of the reverse mathematical process, as it is informative of the strength of a statement and closely related to the vision of reverse mathematics as a partial realization of Hilbert's program~\cite{simpson_partial_1988}. In particular, the quest for the first-order part of Ramsey's theorem for pairs is a very active branch of reverse mathematics. It is known to strictly follow from $\Sigma_2$-induction ($\isig_2$) and to imply the $\Sigma_2$-collection scheme ($\bsig_2$). See Cholak, Jockusch and Slaman~\cite{cholak_jockusch_slaman_2001} for the former result, and Hirst~\cite{hirst1987combinatorics} for the latter one.

The decomposition of $\RT^2_2$ in terms of $\HEM$ and $\CAC$ is particularly interesting, as $\CAC$ is the strongest known consequence of $\RT^2_2$ for which the first-order part is known to be equivalent to $\bsig_2$ (see Chong, Slaman and Yang~\cite{chong2021pi11}). By an amalgamation theorem of Yokoyama~\cite{yokoyama_pi_11_2010}, it follows that the first-order part of $\RT^2_2$ is $\bsig_2$ iff it is the case for $\HEM$. We therefore devote this section to a better understanding of the reverse mathematical strength of this statement.

First of all, thanks to \Cref{lem:avoiding-join}, $\HEM$ can be casted in the Ramsey-like framework and is of the form $\RT^2_2(p_0 \uplus p_1)$ where $p_0$ and $p_1$ are the non-transitivity patterns for color~0 and~1 (see \Cref{fig:non-transitivity}). This makes $\HEM$ benefit from the general analysis of Ramsey-like theorems above. In particular, since $p_0 \uplus p_1$ is a pattern of standard size, we obtain from \Cref{prop:rca-bsig2-rt22p-2dnc} the following lower bound. Recall that $\DNCS{n}$ is the statement \qt{For every set~$X$, there is an $X^{(n-1)}$-DNC function}.

\begin{proposition}
$\RCA_0 + \BSig_2 \vdash \HEM \to \DNCS{2}$.
\end{proposition}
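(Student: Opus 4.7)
The plan is to exploit the fact that $\HEM$ has already been cast as the Ramsey-like statement $\RT^2_2(p_0 \uplus p_1)$, where $p_0$ and $p_1$ are the two non-transitivity patterns of \Cref{fig:non-transitivity}, and then to invoke \Cref{prop:rca-bsig2-rt22p-2dnc} applied to the single pattern $p := p_0 \uplus p_1$. Since this $p$ has standard length $|p_0|+|p_1|-1 = 5$, the earlier proposition applies verbatim and yields $\RCA_0 + \BSig_2 \vdash \RT^2_2(p_0 \uplus p_1) \to \DNCS{2}$, so all that remains is to verify $\RCA_0 \vdash \HEM \to \RT^2_2(p_0 \uplus p_1)$.

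For this last implication, fix an instance $f : [\NN]^2 \to 2$. Apply $\HEM$ to obtain an infinite set $H$ and a color $i < 2$ such that $H$ is $f$-transitive for color $i$; unwinding the definitions, this is precisely saying that no triple in $H$ $f$-realizes the non-transitivity pattern $p_i$, i.e.\ $H$ $f$-avoids $p_i$. Both $p_0$ and $p_1$ are sub-patterns of $p_0 \uplus p_1$, via the embeddings $x \mapsto x$ and $x \mapsto x + |p_0| - 1$ respectively (this is straight from the definition of $\uplus$). By \Cref{lem:sub-pattern-implication}, any set which $f$-avoids $p_i$ also $f$-avoids $p_0 \uplus p_1$, so $H$ is an $\RT^2_2(p_0 \uplus p_1)$-solution to $f$. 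Concatenating the two implications proves the proposition.

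There is no real obstacle here: the whole point is that once $\HEM$ is translated into Ramsey-like vocabulary through the join operator, the uniform lower bound of \Cref{prop:rca-bsig2-rt22p-2dnc} supplies the $\DNCS{2}$-conclusion for free. No additional induction beyond the $\BSig_2$ already built into that earlier proposition is needed, because $p_0 \uplus p_1$ is a single fixed pattern of standard size; in particular, the one use of $\ISig_2$ that appeared in the verification of \Cref{mainthm:dnczp} (the induction on the size of the avoided pattern) is not triggered in this corollary.
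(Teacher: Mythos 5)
Your proof is correct and is essentially the paper's own argument: the paper also casts $\HEM$ as $\RT^2_2(p_0 \uplus p_1)$ (with the easy direction $\HEM \to \RT^2_2(p_0 \uplus p_1)$ following since $p_0$ and $p_1$ are sub-patterns of the join) and then invokes \Cref{prop:rca-bsig2-rt22p-2dnc} for this single pattern of standard size. Your closing remark about not needing the $\ISig_2$ induction over pattern size also matches the paper's discussion preceding that proposition.
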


It is however unknown whether $\HEM$ implies $\BSig_2$ over $\RCA_0$,
as the known proof of $\RCA_0 \vdash \EM \to \BSig_2$ by Kreuzer~\cite{kreuzer2012primitive} produces a coloring $f : [\NN]^2 \to 2$ which is transitive for some color, hence which is trivial from the viewpoint of~$\HEM$.

\begin{proposition}\label[proposition]{prop:hem-preserves}
    $\HEM$ preserves 2-dimensional hyperimmunity.
\end{proposition}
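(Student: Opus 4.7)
The plan is to observe that $\HEM$ is a direct instance of the characterization theorem already proved in \Cref{sect:2dim-hyp}, so the work reduces to verifying the combinatorial properties of the two non-transitivity patterns. As remarked just before \Cref{prop:hem-preserves}, by \Cref{lem:avoiding-join}, $\HEM$ is (equivalent over $\RCA_0$ to) $\RT^2_2(p_0 \uplus p_1)$, where $p_0, p_1$ are the two non-transitivity patterns of \Cref{fig:non-transitivity}. By \Cref{thm:2dimhyper}, it therefore suffices to exhibit within $p_0 \uplus p_1$ two divergent and irreducible sub-patterns, one $0$-merging and the other $1$-merging.

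First I would show that the two non-transitivity patterns~$p_0$ and $p_1$ themselves occur as sub-patterns of $p_0 \uplus p_1$: this is immediate from the definition of the join operator, since $p_0 \uplus p_1$ has length~$5$ and coincides with $p_0$ on its first three nodes and with $p_1$ on its last three nodes.

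Next I would check the four combinatorial properties of $p_0$, the verification for $p_1$ being symmetric by duality. Recall $p_0(0,1) = p_0(1,2) = 0$ and $p_0(0,2) = 1$, so $p_0(0, |p_0|-1) \neq p_0(1, |p_0|-1)$, hence $p_0$ is \emph{divergent}. For irreducibility, the only non-trivial partition $F \sqcup G = \{0,1,2\}$ with $F \neq \emptyset$, $|G|\geq 2$ and $F<G$ is $F = \{0\}$, $G = \{1,2\}$, and $p_0(0,1) = 0 \neq 1 = p_0(0,2)$ witnesses the criterion of \Cref{lem:irreducible-is-extensible}; thus $p_0$ is \emph{irreducible}. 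For the merging property, the only non-trivial $F \sqcup G = \{0,1\}$ is $F = \{0\}, G = \{1\}$: since $p_0(0,2) = 1$, clause~(1) of the definition of $0$-merging is satisfied, so $p_0$ is \emph{$0$-merging}. Dually, $p_1$ is divergent, irreducible, and $1$-merging.

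Having verified that $p_0 \uplus p_1$ contains the required two sub-patterns, \Cref{thm:2dimhyper} immediately yields preservation of one $2$-dimensional hyperimmunity for $\RT^2_2(p_0 \uplus p_1)$, hence for~$\HEM$. There is essentially no obstacle here: all the heavy lifting was done in \Cref{thm:disjunctive-2-dim-hyp-preserves}, and the only thing one must be careful about is to identify, correctly and unambiguously, which of the two non-transitivity patterns is $0$-merging and which is $1$-merging, so that the hypothesis of \Cref{thm:2dimhyper} is matched with the right coloring convention.
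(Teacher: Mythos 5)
Your proposal is correct and follows essentially the same route as the paper: identify $\HEM$ with $\RT^2_2(p_0 \uplus p_1)$ via \Cref{lem:avoiding-join}, observe that $p_0$ and $p_1$ are divergent, irreducible, and respectively $0$- and $1$-merging sub-patterns, and invoke the preservation direction of the 2-dimensional hyperimmunity characterization (the paper asserts these combinatorial facts without verification and cites \Cref{thm:f-2-dim-hyperimmune}, apparently a slip for \Cref{thm:2dimhyper}/\Cref{cor:irreducible-divergent-merging-subpattern-preserves}, which is the reference you correctly use). Your explicit checks of divergence, irreducibility, the merging colors, and the sub-pattern embeddings are all accurate.
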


\begin{proof}
Given $i < 2$, let $p_i$ be the non-transitivity pattern for color~$i$. It is divergent, irreducible, and $i$-merging. Let $p = p_0 \uplus p_1$. By construction, both $p_0$ and $p_1$ are sub-patterns of $p$. As such, by \Cref{thm:2dimhyper}, $\RT_2^2(p)$ preserves 2-dimensional hyperimmunity. Finally, together with \Cref{lem:avoiding-join}, this yields that $\HEM$ preserves 2-dimensionnal hyperimmunity.
\end{proof}

\begin{corollary}
    $\WKL + \HEM + \COH$ does not imply $\EM$ over~$\RCA_0$.
\end{corollary}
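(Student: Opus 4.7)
The plan is the standard $\omega$-model separation via preservation. I would show that each of $\WKL$, $\HEM$, and $\COH$ preserves one 2-dimensional hyperimmunity, while $\EM$ does not, and then iterate to build an $\omega$-model of $\RCA_0 + \WKL + \HEM + \COH$ in which a fixed bi-family $\Hc$ remains 2-dimensional hyperimmune relative to every set of the model.

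Three of the four preservation facts are already in hand: $\HEM$ is handled by the preceding \Cref{prop:hem-preserves}; $\WKL$ is handled by Liu--Patey's Corollary~2.7 of \cite{liu2022reverse}, already invoked in the proof of \Cref{lem:merging}; and $\COH$ is a routine cohesive-forcing adaptation of the 2-dimensional invariant, analogous to the classical argument that $\COH$ preserves hyperimmunity. The nontrivial direction is the failure of preservation by $\EM$. For this I would take the stable computable coloring $f$ produced by \Cref{thm:f-2-dim-hyperimmune}, for which both $\Hc_0(f)$ and $\Hc_1(f)$ are 2-dimensional hyperimmune. Let $p_0$ be the non-transitivity pattern for color $0$ of \Cref{fig:non-transitivity}: it is divergent, irreducible, and $0$-merging, but not $1$-merging. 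So every sub-pattern of $p_0$ is either reducible, convergent, or non-$1$-merging, and \Cref{thm:2-dim-reciprocal} applied with $i=1$ yields that every infinite $H$ for which $\Hc_1(f)$ remains 2-dimensional $H$-hyperimmune strongly $f$-realizes $p_0$. By \Cref{rem:strongly-appears-implies-appears} such an $H$ contains an $f$-realization of $p_0$, hence fails to be $f$-transitive for color $0$ and in particular is not an $\EM$-solution. Thus $\Hc_1(f)$ witnesses that $\EM$ does not preserve one 2-dimensional hyperimmunity.

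With these pieces in place the separation is routine. Fix $f$ and $\Hc := \Hc_1(f)$ as above, and construct a Turing ideal $\mathcal{S} = \bigcup_n \mathcal{S}_n$ starting with $\mathcal{S}_0$ the computable sets; at each stage bookkeep through the $\WKL$-, $\HEM$-, and $\COH$-instances coded in $\mathcal{S}_n$, and apply the appropriate preservation theorem to adjoin a solution whose join with the current oracle still leaves $\Hc$ 2-dimensional hyperimmune. The resulting $\omega$-model $\M$ with second-order part $\mathcal{S}$ satisfies $\RCA_0 + \WKL + \HEM + \COH$ and enjoys the invariant that $\Hc$ is 2-dimensional $X$-hyperimmune for every $X \in \mathcal{S}$. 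Since $f$ is computable it belongs to $\mathcal{S}$, but by the previous paragraph no infinite $f$-transitive set lies in $\mathcal{S}$, so $\M \not\models \EM$.

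The only ingredient not already explicitly in the paper is preservation of 2-dimensional hyperimmunity by $\COH$; I expect this to be the main anticipated obstacle, to be either cited from \cite{liu2022reverse} or verified in a short standalone lemma following the same forcing-question template used throughout \Cref{sect:2dim-hyp}.
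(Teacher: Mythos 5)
Your proposal is correct and takes essentially the same route as the paper: the corollary is obtained by preservation of one 2-dimensional hyperimmunity for $\WKL$, $\HEM$ and $\COH$ (the $\COH$ case is exactly Liu and Patey~\cite[Corollary 2.9]{liu2022reverse}, which the paper cites) together with its failure for $\EM$, followed by the standard Turing-ideal iteration. The only divergence is cosmetic: where the paper simply cites Liu and Patey~\cite[Corollary 2.12]{liu2022reverse} for the failure of $\EM$, you re-derive it internally from \Cref{thm:f-2-dim-hyperimmune} and \Cref{thm:2-dim-reciprocal} applied to the non-transitivity pattern $p_0$ with $i=1$, which is a valid (and self-contained) substitute.
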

\begin{proof}
By \Cref{prop:hem-preserves}, and Liu and Patey~\cite[Corollary 2.7, Corollary 2.9-]{liu2022reverse} $\HEM$, $\WKL$ and $\COH$ preserve 2-dimensional hyperimmunity, while by Liu and Patey~\cite[Corollary 2.12]{liu2022reverse},  $\EM$ does not, as witnessed by a stable coloring.
\end{proof}

\begin{proposition}\label[proposition]{prop:hem-does-not-preserve}
    $\HEM$ does not preserve two 2-dimensional hyperimmunities, as witnessed by a stable coloring.
\end{proposition}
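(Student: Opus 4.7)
The plan is to deduce the proposition from \Cref{cor:omega-2-dim-clean-reciprocal} applied to the pattern $p := p_0 \uplus p_1$, where $p_0$ and $p_1$ are the two non-transitivity patterns of \Cref{fig:non-transitivity}. The reduction is immediate: any $\HEM$-solution $H$ to a coloring $f$ is by definition transitive for some color $i<2$, hence $f$-avoids the forbidden pattern $p_i$, and since $p_i$ is a sub-pattern of $p_0 \uplus p_1$, the set $H$ also $f$-avoids $p_0 \uplus p_1$. Therefore the stable computable coloring furnished by \Cref{cor:omega-2-dim-clean-reciprocal} for $\RT^2_2(p)$ automatically witnesses the failure of $\HEM$ to preserve two 2-dimensional hyperimmunities.

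It remains to verify the hypothesis of the corollary: every sub-pattern of $p = p_0 \uplus p_1$ is either reducible, convergent, or non-merging. Since $|p|=5$, the relevant sizes are~3, 4, and~5. The size-5 case is $p$ itself, which is reducible by construction. For size~4, any sub-pattern $S$ must meet both $\{0,1\}$ and $\{2,3,4\}$ (else $|S| \leq 3$); setting $L := S \cap \{0,1\}$, the partition $F := L$, $G := S \setminus L$ satisfies $F \neq \emptyset$, $|G| \geq 2$, and $F < G$, and the definition of the join ensures that $p(x,y) = p_0(x,2)$ for every $x \in L$ and every $y \geq 2$. Hence the partition witnesses reducibility via \Cref{lem:irreducible-is-extensible}. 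For size~3, a direct enumeration of the four divergent length-3 patterns shows that the only irreducible ones are $p_0$ and $p_1$ themselves, each of which is $i$-merging for precisely one color $i<2$, hence non-merging.

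All sub-patterns of $p_0 \uplus p_1$ accordingly fall into at least one of the three admissible categories, so \Cref{cor:omega-2-dim-clean-reciprocal} delivers the desired stable coloring. The argument is essentially a translation followed by a small case analysis; the only combinatorial observation worth flagging is that the \qt{bottleneck} at the middle node of the join collapses every length-4 sub-pattern into a reducible one, which is precisely what prevents $\HEM$ from exhibiting preservation of more than a single 2-dimensional hyperimmunity.
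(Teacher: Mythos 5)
Your proposal is correct and follows essentially the same route as the paper: reduce $\HEM$ to avoidance of $p_0 \uplus p_1$ and invoke \Cref{cor:omega-2-dim-clean-reciprocal} after checking that this pattern has no sub-pattern which is simultaneously divergent, irreducible and merging. The only difference is that you carry out explicitly the size-by-size verification (size 5 reducible, size 4 reducible via the constant colors across the join's middle node, size 3 forced to be $p_0$ or $p_1$ and hence merging for only one color), which the paper merely asserts; your checks are accurate.
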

\begin{proof}
 As mentioned, $\HEM$ is equivalent over $\RCA_0$ to $\RT^2_2(p_0 \uplus p_1)$, where $p_0$ and $p_1$ are as in \Cref{fig:non-transitivity}. The pattern $p_0 \uplus p_1$ does not contain any sub-pattern which is simultaneously irreducible, merging and divergent, so by \Cref{cor:omega-2-dim-clean-reciprocal}, $\HEM$ does not preserve two 2-dimensional hyperimmunities.
\end{proof}

\begin{corollary}
$\WKL + \COH$ does not imply $\HEM$ over $\RCA_0$.
\end{corollary}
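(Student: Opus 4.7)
The plan is to separate $\WKL + \COH$ from $\HEM$ by exhibiting a weakness property preserved by $\WKL$ and $\COH$ but not by $\HEM$. The natural candidate, in view of \Cref{prop:hem-does-not-preserve}, is preservation of two 2-dimensional hyperimmunities. By Liu and Patey~\cite[Corollary 2.7, Corollary 2.9-]{liu2022reverse}, $\WKL$ and $\COH$ preserve $\omega$ 2-dimensional hyperimmunities, hence in particular they each preserve two of them.

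Given these three ingredients, the separation is obtained via the standard iterated construction of an $\omega$-model (see e.g.\ Patey~\cite[Section 3.4]{patey2016reverse}). First, I would fix the stable computable coloring $f : [\NN]^2 \to 2$ witnessing \Cref{prop:hem-does-not-preserve}, together with the two 2-dimensional hyperimmune bi-families $\Hc_0(f)$ and $\Hc_1(f)$ from \Cref{thm:f-2-dim-hyperimmune}. Then I would iteratively close under solutions to $\WKL$-instances and $\COH$-instances, using the preservation results of Liu and Patey at each stage to ensure that both $\Hc_0(f)$ and $\Hc_1(f)$ remain 2-dimensional hyperimmune relative to every set added to the ideal. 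This produces a Turing ideal $\mathcal{S}$ such that the $\omega$-model $(\omega, \mathcal{S}, +, \cdot, <)$ satisfies $\RCA_0 + \WKL + \COH$ and contains $f$.

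It remains to verify that this model does not satisfy $\HEM$. Since $f$ is computable, it lies in $\mathcal{S}$. Suppose for contradiction that there is some infinite set $H \in \mathcal{S}$ which is $f$-transitive for some color, i.e., a solution to $\HEM$ for $f$. Then $H$ $f$-avoids the pattern $p_0 \uplus p_1$, with $p_0, p_1$ the non-transitivity patterns of \Cref{fig:non-transitivity}, and by \Cref{rem:strongly-appears-implies-appears} it strongly $f$-avoids $p_0 \uplus p_1$ as well. By the construction of $\mathcal{S}$, both $\Hc_0(f)$ and $\Hc_1(f)$ are 2-dimensional $H$-hyperimmune, so \Cref{thm:omega-2-dim-reciprocal} forces $p_0 \uplus p_1$ to contain an irreducible, divergent, merging sub-pattern, contradicting the choice of $f$ in \Cref{prop:hem-does-not-preserve}.

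The only non-routine step is the verification of the preservation results for $\WKL$ and $\COH$, but these are already established in~\cite{liu2022reverse}, so the proof amounts to assembling existing components.
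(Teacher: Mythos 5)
Your proof is correct and follows essentially the same route as the paper: both rest on the fact that $\WKL$ and $\COH$ preserve two 2-dimensional hyperimmunities (Liu--Patey) while $\HEM$ does not (\Cref{prop:hem-does-not-preserve}), combined with the standard iterated $\omega$-model construction. You merely spell out the model construction and re-verify the non-preservation for $\HEM$, which the paper treats as a black-box citation of \Cref{prop:hem-does-not-preserve}.
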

\begin{proof}
By Liu and Patey~\cite[Corollary 2.7, Corollary 2.9]{liu2022reverse}, both $\WKL$ and $\COH$ preserve two 2-dimensional hyperimmunities over $\RCA_0$, while $\HEM$ does not by \Cref{prop:hem-does-not-preserve}.
\end{proof}

\begin{center}
\textbf{Acknowledgment}
\end{center}
The authors are thankful to the anonymous referee for his numerous comments and improvements suggestions.

\bibliographystyle{plain}
\bibliography{biblio}

\end{document}